\newtheorem{theorem}{Theorem}[section]
\newtheorem{lemma}[theorem]{Lemma}
\newtheorem*{claim}{Claim}
\newtheorem{proposition}[theorem]{Proposition}
\theoremstyle{definition}
\newtheorem{definition}[theorem]{Definition}
\theoremstyle{remark}
\newtheorem{remark}[theorem]{Remark}
\newcommand\co{\colon\thinspace}
\begin{document}

\title[Alternating diagrams and solenoids]
{Alternating Heegaard diagrams and Williams solenoid
attractors in 3--manifolds}

\author{Chao Wang}
\address{School of Mathematical Sciences, University of
Science and Technology of China, Hefei 230026, CHINA}
\email{chao\_{}wang\_{}1987@126.com}

\author{Yimu Zhang}
\address{Mathematics School, Jilin University,
Changchun 130012, CHINA}
\email{zym534685421@126.com}

\subjclass[2000]{Primary 57N10, 37C70, 37D45; Secondary 57M12}

\keywords{Heegaard diagram, solenoid attractor, Prism manifold,
Poincar\'e's homology 3--sphere, figure eight knot}

\begin{abstract}
We find all Heegaard diagrams with the property ``alternating'' or
``weakly alternating'' on a genus two orientable closed surface.
Using these diagrams we give infinitely many genus two 3--manifolds,
each admits an automorphism whose non-wondering set consists of two
Williams solenoids, one attractor and one repeller. These manifolds
contain half of Prism manifolds, Poincar\'e's homology 3--sphere and
many other Seifert manifolds, all integer Dehn surgeries on the
figure eight knot, also many connected sums. The result shows that
many kinds of 3--manifolds admit a kind of ``translation'' with
certain stability.
\end{abstract}

\date{}
\maketitle

\section{Introduction}
In \cite{Sm}, Smale introduced the solenoid attractor into dynamics
as an example of indecomposable hyperbolic non-wondering set. It has
a nice geometric model, namely the nested intersections of solid
tori. Suppose $f$ is a fibre preserving embedding from a disk fibre
bundle $N$ over $S^1$ into itself, contracting the fibres and
inducing an expansion on $S^1$, then $\bigcap_{i=1}^{\infty}f^i(N)$
is a so called Smale solenoid. To generalize this kind of
construction, in \cite{Wi}, Williams introduced solenoid attractors
derived from expansions on 1--dimensional branched manifolds. It
also has a geometric model, as the nested intersections of
handlebodies.

For a 3--manifold $M$, many of these attractors can be realized by
the geometric models with suitable automorphisms $f \in Diff(M)$.
But for most cases the realizations will not be global. Global means
that the non-wondering set $\Omega(f)$ is the union of solenoid
attractors and repellers. Here a repeller of $f$ is an attractor of
$f^{-1}$. By standard arguments in dynamics, one can show that if we
require $\Omega(f)$ consists of solenoid attractors and repellers,
then there must be exactly one attractor and one repeller. And $f$
is like a translation on $M$.

Motivated by the study in Morse theory and Smale's work in dynamics,
the following question was suggested in \cite{JNW} by Jiang, Ni and
Wang who studied this global realization question for Smale
solenoids.

{\bf Question :} When does a 3--manifold admit an automorphism whose
non-wandering set consists of solenoid attractors and repellers?

In \cite{JNW}, they showed that for a closed orientable 3--manifold
$M$, there is a diffeomorphism $f\co M\rightarrow M$ with the
non-wandering set $\Omega(f)$ a union of finitely many Smale
solenoids IF and ONLY IF $M$ is a Lens space $L(p, q)$ with $p \neq
0$, namely $M$ has Heegaard genus one and is not $S^1\times S^2$.
They also showed that the $f$ constructed in the IF part is
$\Omega$--stable, but is not structurally stable.

As in the opinion of \cite{JNW}, a manifold $M$ admitting a dynamics
$f$ such that $\Omega(f)$ consists of one hyperbolic attractor and
one hyperbolic repeller presents a symmetry of the manifold with
certain stability. The simplest example is the sphere, which admits
a dynamics $f$ such that $\Omega(f)$ consists of exactly two
hyperbolic fixed points, a sink and a source. Lens spaces give us
more such examples when we consider more complicated attractors. It
is believed by Jiang, Ni and Wang that many more 3--manifolds admit
such symmetries if we replace the Smale solenoids by the Williams
solenoids. As special cases, Wang asked whether the Poincar\'e's
homology 3--sphere admits such a symmetry? What about hyperbolic
3--manifolds?

Similar with the discussion in \cite{JNW}, in \cite{MY}, Ma and Yu
showed that for a closed orientable 3--manifold $M$, if there is a
$f \in Diff(M)$ such that $\Omega(f)$ consists of Williams
solenoids, whose defining handlebodies have genus $g\leq 2$, then
the Heegaard genus $g(M)\leq 2$. On the other hand, to construct
such $M$ and $f$, they introduced the alternating Heegaard splitting
which is a genus two splitting and admits a so called alternating
Heegaard diagram (see Definition \ref{AlterD}). They showed that if
$M$ admits an alternating Heegaard splitting, then there is a $f$
such that $\Omega(f)$ consists of two Williams solenoids, whose
defining handlebodies have genus two. As an interesting example,
they showed that the truncated-cube space (see \cite{M}), whose
fundamental group is the extended triangle group of order 48, admits
an alternating Heegaard splitting.

The motivation of this paper is to find further such examples. As
special cases, we will show that the Poincar\'e's homology 3--sphere
and many hyperbolic 3--manifolds admit such ``symmetries with
certain stability''. Hence we give a partial answer to the questions
asked by Wang.

Concretely, let $S^2(a,b,c)$ denote the Seifert fibred spaces, with
base $S^2$ and three singular fibres having invariants $a,b,c$. For
example, $S^2(-1/2,1/4,1/3)$ is the truncated-cube space. Let
$P(m,n)$ denote the manifolds $S^2(-1/2,1/2,m/n)$, which are the so
called Prism manifolds, the simplest 3--manifolds other than Lens
spaces.

\begin{theorem}\label{MAS}
For a 3--manifold $M$ in the following classes, it admits an
alternating Heegaard splitting.
\begin{itemize}
\item $P(m,n)$, $0<m<n, (m,n)=1$.
\item $S^2(-1/2,1/4,m/n)$, $0<m<n/2, (m,n)=1$.
\item $L(n,m)\,\#\, S^1\times S^2$, $L(n,m)\,\#\, RP^3$, $0\leq m<n,
(m,n)=1$.
\end{itemize}
Also there are infinitely many hyperbolic 3--manifolds admitting
such splittings. For these 3--manifolds there exist $f \in Diff(M)$
such that $\Omega(f)$ consist of two Williams solenoids.
\end{theorem}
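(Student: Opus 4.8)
The plan is to reduce Theorem~\ref{MAS} to a purely combinatorial classification of alternating (and weakly alternating) Heegaard diagrams on the genus two surface, and then to identify which 3--manifolds arise from the splittings so obtained. By the result of Ma and Yu quoted above, once $M$ is shown to admit an alternating Heegaard splitting it automatically carries an $f\in\mathrm{Diff}(M)$ with $\Omega(f)$ a union of two genus two Williams solenoids, so the entire content is the first sentence of the theorem. First I would set up the combinatorial framework: fix a genus two handlebody $H$ with meridian system, and encode a Heegaard diagram by the word(s) in which the attaching curves of the second handlebody cross the meridian disks, recording signs. The ``alternating'' condition (Definition~\ref{AlterD}) puts a severe restriction on these words, and the bulk of the work is a finite case analysis: I would enumerate all cyclic words on the generators compatible with the alternating/weakly alternating hypothesis, discard those that do not close up to a legitimate pair of simple closed curves filling the surface, and reduce the survivors modulo the mapping class group action and homeomorphisms of the diagram. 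This yields an explicit finite list of ``model'' diagrams, each depending on one or two integer parameters coming from how many times a curve wraps.

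Next I would compute, for each model diagram in the list, a presentation of $\pi_1(M)$ (reading relators straight off the diagram) and the first homology, and then recognize the manifold. For the Seifert pieces $P(m,n)=S^2(-1/2,1/2,m/n)$ and $S^2(-1/2,1/4,m/n)$ I would match the group presentation against the standard presentation of a Seifert manifold over $S^2$ with three exceptional fibers, checking the Seifert invariants via the usual normal form; the parameter constraints $0<m<n$, $(m,n)=1$, resp.\ $0<m<n/2$, are exactly the ranges in which the corresponding diagram is alternating. For the connected sums $L(n,m)\#S^1\times S^2$ and $L(n,m)\#RP^3$ I would exhibit an explicit reducing sphere in the Heegaard surface (a separating curve disjoint from enough meridians) and read off the two summands. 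For the hyperbolic family I would show that one of the parameterized model diagrams gives, after simplification, a Heegaard diagram of a manifold with an ideal triangulation or a once--punctured--torus bundle structure (the figure eight knot surgeries and their cousins appear here), and invoke Thurston's hyperbolization together with a cusp/geometrization argument, or simply verify hyperbolicity for infinitely many parameter values by a volume/incompressibility estimate, to conclude infinitely many are hyperbolic.

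A convenient concrete route for the hyperbolic claim, which I would try first, is to take the figure eight knot complement, whose once--punctured--torus bundle monodromy gives it a genus two Heegaard splitting, verify directly that this splitting is alternating, and then observe that integer Dehn filling changes the diagram only by inserting $n$ extra parallel intersections along one curve, preserving the alternating property; since all but finitely many integer surgeries on the figure eight knot are hyperbolic (Thurston), this produces the required infinite family at one stroke, and simultaneously covers the ``all integer Dehn surgeries on the figure eight knot'' assertion from the abstract.

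The main obstacle I expect is the completeness and honesty of the combinatorial classification: it is easy to write down families of alternating diagrams, but proving the list is exhaustive requires a careful, somewhat delicate argument that every alternating word on the genus two surface, up to the symmetries of the diagram and the handlebody mapping class group, is equivalent to one on the list — in particular ruling out sporadic diagrams and making sure ``weakly alternating'' does not secretly enlarge the family in an uncontrolled way. The second, lesser obstacle is manifold recognition: distinguishing when a given parameter value yields a genuine Prism or small Seifert manifold versus a Lens space or a reducible manifold (so that the stated parameter ranges are sharp), which I would handle by computing $|H_1|$ and the Seifert invariants and comparing with known tables, and by spotting reducing spheres where they exist.
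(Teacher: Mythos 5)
Your high-level plan (classify the alternating diagrams combinatorially, then recognize the resulting manifolds, with the existence of $f$ delegated to the Ma--Yu construction) matches the paper's architecture, but two of your concrete routes have genuine problems. The more serious one is the hyperbolic family: your ``convenient concrete route'' via integer surgeries on the figure eight knot cannot work for \emph{alternating} splittings. As recalled in the introduction (from \cite{MY}), any $M$ with an alternating Heegaard splitting has $H_1(M,\mathbb{Z}_2)\neq 0$, whereas $S^3_{l/1}(4_1)$ has $H_1\cong\mathbb{Z}/l$, which kills the $\mathbb{Z}_2$--homology for all odd $l$. This is precisely why the figure eight surgeries appear only in Theorem \ref{MASP} under the \emph{weakly} alternating notion, and why the paper's hyperbolic examples for Theorem \ref{MAS} are instead the manifolds $M_2(n;0,n-3,2)$, exhibited as Dehn surgeries on a hyperbolic knot in $RP^3$ whose double cover is the link $6^2_3$, with hyperbolicity for all but finitely many $n$ coming from Thurston's hyperbolic Dehn surgery theorem. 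Your claim that ``integer Dehn filling changes the diagram only by inserting $n$ extra parallel intersections \ldots preserving the alternating property'' is therefore false as stated, and no repair within the alternating category is possible for odd surgeries.

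The second, lesser issue is manifold recognition. Reading a presentation of $\pi_1$ off the diagram and ``matching against the standard presentation'' is not a reliable identification method: lens spaces are not determined by their fundamental groups, and deciding which Seifert invariants a given genus two presentation realizes is exactly the hard part. The paper's actual key device, which your proposal omits entirely, is that every alternating diagram $D(4n;m_1,m_2,m_3)$ admits a hyperelliptic involution $\tau$ preserving all six curves, so that $M$ is the $2$--fold branched cover of $S^3$ over an explicitly drawn three--bridge link (Proposition \ref{branch}); the link is then seen to be a connected sum of a two--bridge link with a trivial link or Hopf link (giving $L(n,m)\,\#\,S^1\times S^2$ and $L(n,m)\,\#\,RP^3$), or a Montesinos link with rational tangles $(-1,2),(1,2),(m,n)$ resp.\ $(-1,2),(1,4),(m,n)$ (giving $P(m,n)$ and $S^2(-1/2,1/4,m/n)$). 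This branched-cover step is what makes the Seifert invariants computable; without it, or some equally concrete substitute, your recognition step is a gap rather than a proof.
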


In fact, we can find all the alternating Heegaard diagrams on a
genus two orientable surface. These diagrams can be determined by
integral vectors $(n,k_1,k_2,k_3)$, which satisfy $n>0$ and the
greatest common divisor $(n,k_1+k_2+2k_3)=1$. The 3--manifolds in
Theorem \ref{MAS} come from special diagrams.

On the other hand, having an alternating Heegaard splitting is a
strong restriction to genus two 3--manifolds. As it is pointed in
\cite{MY}, if $M$ admits an alternating Heegaard splitting, then
$H_1(M,\mathbb{Z}_2)\neq 0$. Hence we can not apply the result in
\cite{MY} to the Poincar\'e's homology 3--sphere. After a
modification, we generalize the alternating Heegaard splitting to
the weakly alternating Heegaard splitting (see Definition
\ref{WAlterD}), which also guarantees the existence of the required
$f$.

\begin{theorem}\label{wahws}
If the closed orientable 3--manifold $M$ admits a weakly alternating
Heegaard splitting, then there is a diffeomorphism $f \in Diff(M)$
such that $\Omega(f)$ consists of two Williams solenoids.
\end{theorem}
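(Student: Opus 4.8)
The plan is to carry out the construction of \cite{MY} for the wider class of splittings, locating precisely where the strong ``alternating'' hypothesis was used and checking that the weakened one still does the job. A weakly alternating Heegaard splitting (Definition \ref{WAlterD}) is in particular a genus two splitting $M = H^+ \cup_\Sigma H^-$, and what has to be produced is a diffeomorphism $f$ with $f(H^+) \subset \operatorname{int} H^+$ (equivalently $H^- \subset \operatorname{int} f(H^-)$) such that $f|_{H^+}$ and $f^{-1}|_{H^-}$ are, up to isotopy, the self-embeddings of a genus two handlebody associated with an expanding map on a branched $1$--manifold spine in the sense of Williams \cite{Wi}. Granting this, the nested intersections $\Lambda^+ = \bigcap_{i\ge 0} f^i(H^+)$ and $\Lambda^- = \bigcap_{i\ge 0} f^{-i}(H^-)$ are the required Williams solenoids, an attractor and a repeller.

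First I would read off the weakly alternating diagram a pair of dual fibred structures: a spine $K^+$ of $H^+$ carrying the structure of a branched $1$--manifold together with a disk--bundle type neighborhood filling $H^+$, an expanding ``doubling'' type map $g^+ \co K^+ \to K^+$ compatible with the branch structure, and symmetrically a spine $K^-$ of $H^-$ with expanding map $g^-$. The meridian curves recorded on $\Sigma$ prescribe how $K^+$ and $K^-$ meet $\Sigma$ and how the flaps of their fibred neighborhoods are arranged, and from these data one obtains self-embeddings $f^+ \co H^+ \hookrightarrow \operatorname{int} H^+$ and $f^- \co H^- \hookrightarrow \operatorname{int} H^-$, each contracting the disk fibres and inducing $g^\pm$ on the spine, exactly as for an alternating diagram. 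The point to verify here is that the local modification which turns an alternating diagram (Definition \ref{AlterD}) into a merely weakly alternating one does not destroy this branched/fibred picture: it only affects the homological side effect $H_1(M;\mathbb{Z}_2) \neq 0$ recorded in \cite{MY}, not the construction of the expanding maps, so the pieces $f^\pm$ still exist.

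The heart of the argument is the gluing step, and I expect it to be the main obstacle. One must check that $f^+$ and the dual datum on the $H^-$ side can be isotoped so as to match along $\Sigma$, i.e. that the cobordism $W^+ = H^+ \setminus \operatorname{int} f^+(H^+)$ between $\Sigma$ and $f^+(\Sigma)$ is diffeomorphic, rel the prescribed behaviour on $\Sigma$, to the corresponding cobordism $W^- = H^- \setminus \operatorname{int} f^-(H^-)$ with the roles of the two boundary surfaces interchanged; equivalently, that $M \setminus \operatorname{int} f^+(H^+)$ is again a genus two handlebody carrying exactly the dual Williams structure needed to make $f^{-1}|_{H^-}$ expanding. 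This is precisely where the (weakly) alternating combinatorics is essential, since the diagram is the combinatorial record of how the two dual spine structures sit on $\Sigma$; the delicate part is keeping simultaneous control of both branched neighborhoods and of their traces on $\Sigma$ while performing the isotopies, and confirming that the extra flexibility allowed by ``weakly alternating'' remains compatible with this matching. Once it is done, setting $f = f^+$ on $H^+$, $f = (f^-)^{-1}$ on $f^-(H^-)$, and $f$ equal to the matching diffeomorphism $W^- \to W^+$ on the remaining collar region defines a single $f \in Diff(M)$.

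Finally I would identify $\Omega(f)$. By construction $\Lambda^+$ and $\Lambda^-$ are hyperbolic invariant sets of Williams type, an attractor and a repeller, so $\Lambda^+ \cup \Lambda^- \subset \Omega(f)$. For the reverse inclusion one runs the standard argument for Smale and Williams solenoids: the regions $f^n(W^+)$, $n \in \mathbb{Z}$, cover $M$ with pairwise disjoint interiors, their intersections over $n \to +\infty$ and $n \to -\infty$ are exactly $\Lambda^+$ and $\Lambda^-$, and consequently any point outside $\Lambda^+ \cup \Lambda^-$ has a small neighborhood met by only finitely many of these regions, so it is wandering. Hence $\Omega(f) = \Lambda^+ \sqcup \Lambda^-$ is a union of two Williams solenoids, which is the assertion of the theorem.
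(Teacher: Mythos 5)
There is a genuine gap, and it comes from locating the difficulty in the wrong place. You treat the ``gluing step'' --- matching $f^+$ and $f^-$ along a cobordism between $\Sigma$ and $f^+(\Sigma)$ --- as the heart of the argument, but in the construction of \cite{JNW} and \cite{MY} (reproduced in Section \ref{consf}) there is no gluing problem at all: $f=f_3\circ f_2\circ f_1$ is assembled from globally defined diffeomorphisms of $M$, each isotopic to the identity, so the two self-embeddings are matched automatically. The step you defer as ``the main obstacle'' is not where the weakly alternating hypothesis is tested.

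The actual difficulty is the one you wave away. You assert that passing from alternating to weakly alternating ``only affects the homological side effect $H_1(M;\mathbb{Z}_2)\neq 0$\ldots not the construction of the expanding maps, so the pieces $f^\pm$ still exist.'' That is not correct. In the construction for an alternating diagram, three half-twist bands are attached at chosen points of $\alpha_1\cap\alpha_2$, $\beta_1\cap\beta_2$ and $\gamma_1\cap\gamma_2$, and it is precisely the alternating intersection of \emph{all three} pairs that makes the induced map on the handcuffs spine $K$ an expansion. In a weakly alternating diagram only the separating curves $\gamma_i$ are required to meet the opposite system alternately; $\alpha_1$ may meet $\alpha_2$ (and $\beta_1$ may meet $\beta_2$) in an arbitrary pattern, or not at all (this happens for the diagrams $D(4n;\pm1[l],m_2,m_3)$ obtained by Dehn twisting), so the three-band construction is simply unavailable. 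The paper's proof replaces it by a \emph{single} band attached at one point of $\gamma_1\cap\gamma_2$, and the substantive verification --- which your proposal never makes --- is that the resulting induced maps $g_1^{-1}$ and $g_2$ still satisfy the Expanding condition on all of $K$, in particular that the middle arc of the handcuffs is genuinely stretched; this is arranged by the modification of the spine shown in Figure \ref{fig:w8}. Without identifying the one-band construction and checking expansion there, the proof does not go through.
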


We can also find all the so called weakly alternating Heegaard
diagrams and for a part of them we can identify the corresponding
3--manifolds. Notice that the Poincar\'e's homology 3--sphere has
the form $S^2(-1/2,1/3,1/5)$. $\forall l\in \mathbb{Z}$, let
$S^3_{l/1}(4_1)$ denote the $l/1$--surgery on the figure eight knot.

\begin{theorem}\label{MASP}
For a 3--manifold $M$ in the following classes, it admits a weakly
alternating Heegaard splitting.
\begin{itemize}
\item $S^3_{l/1}(4_1)$.
\item $S^2(-1/2,1/l,m/n)$, $0<m<n, (m,n)=1$.
\item $S^2(1/l,1/r,1/n)$, $n>0$.
\item $L(n,m)\,\#\, L(l,1)$, $0\leq m<n, (m,n)=1$.
\end{itemize}
For these 3--manifolds there exist $f \in Diff(M)$ such that
$\Omega(f)$ consist of two Williams solenoids.
\end{theorem}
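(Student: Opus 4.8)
The plan is to produce, for each manifold $M$ listed in Theorem \ref{MASP}, an explicit weakly alternating Heegaard diagram on the genus two orientable surface and then to identify the resulting 3--manifold; the final sentence of the theorem is then immediate from Theorem \ref{wahws}. The starting point is the classification alluded to after Theorem \ref{wahws}: just as the alternating diagrams are catalogued by integral vectors $(n,k_1,k_2,k_3)$ with $n>0$ and $(n,k_1+k_2+2k_3)=1$, the weakly alternating diagrams come in a closely analogous finite-parameter family, now with the extra flexibility that the modification of Definition \ref{WAlterD} permits. So the first step is to write down this list of weakly alternating diagrams together with, for each, a presentation of $\pi_1(M)$ and a description of $M$ obtained by reading off the two systems of curves on $\Sigma_2$.

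Second, for each of the four families I would choose the parameters realizing it and compute the homeomorphism type of the associated $M$. For $L(n,m)\,\#\,L(l,1)$ this is the easiest: a diagram that is essentially a ``disjoint union'' of a genus one diagram for $L(n,m)$ and one for $L(l,1)$ can be arranged to be weakly alternating, and the corresponding $M$ is the connected sum. For the Seifert families $S^2(-1/2,1/l,m/n)$ and $S^2(1/l,1/r,1/n)$ I would use the standard vertical genus two Heegaard splitting of a Seifert manifold over $S^2$ with three exceptional fibres, and check that, after an isotopy, its diagram lies in the weakly alternating list; the Seifert invariants are then recovered from the slopes in the usual way, and in particular $S^2(-1/2,1/3,1/5)$ yields the Poincar\'e homology 3--sphere — which, having $H_1=0$, is precisely the case forcing one to use \emph{weakly} alternating rather than alternating diagrams. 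For $S^3_{l/1}(4_1)$ I would begin from the well-known genus two Heegaard splitting of the figure eight knot exterior, perform the $l/1$ Dehn filling to close it up to a genus two splitting of the surgered manifold, and verify that the resulting diagram is weakly alternating; the homeomorphism type is then identified directly through the surgery description.

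Third, each candidate diagram must be checked against the defining conditions of Definition \ref{WAlterD} — the intersection pattern of the two curve systems and the ``weakly alternating'' condition on how the arcs run — and the coprimality condition of the form $(n,\ast)=1$ must be confirmed for the chosen parameters; this is routine but has to be carried out case by case.

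The main obstacle is the identification step, especially for $S^3_{l/1}(4_1)$: translating the combinatorial data of a weakly alternating diagram into a recognizable description of the underlying 3--manifold (a surgery diagram, a list of Seifert invariants, or a connected-sum decomposition) is where the real work lies, since a Heegaard diagram does not wear its homeomorphism type on its sleeve, and several diagrams can present the same manifold. Once all four families have been matched, Theorem \ref{wahws} immediately supplies the desired $f\in Diff(M)$ whose non-wandering set $\Omega(f)$ consists of two Williams solenoids.
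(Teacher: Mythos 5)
Your outline correctly isolates the two ingredients (a classification of weakly alternating diagrams, which is the paper's Theorem \ref{all wad}, plus Theorem \ref{wahws} for the dynamics), but the step you yourself flag as ``where the real work lies'' --- identifying the manifolds --- is left without a workable method, and that identification is precisely the content of the paper's proof. The paper does not compute $\pi_1$ from the diagrams, nor does it start from known splittings of the target manifolds and check weak alternation. Instead it exploits the hyperelliptic involution $\tau$ carried by every (weakly) alternating diagram (Figure \ref{fig:13}): each $M_1(n;k_1[l],k_2[r],k_3)$ is exhibited as the 2--fold branched cover of $S^3$ over an explicit three--bridge link drawn on an $n\times n$ square pillow with twist boxes (Proposition \ref{wbranch}), and the four families are then read off by recognizing these links --- as connected sums of two--bridge links, as Montesinos links with three rational tangles (whose double branched covers are the Seifert fibred spaces $S^2(-1/2,1/(l+2),m/n)$ and $S^2(1/l,1/r,1/n)$), or, for $S^3_{l/1}(4_1)$, by replacing the $l$--box in the quotient link by a trivial tangle, observing that the lift of the complementary arc is the figure eight knot, and computing the surgery slope (Proposition \ref{figure8}). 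Without some such mechanism your plan does not close.

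Two of your concrete suggestions are also unlikely to work as stated. First, a diagram that is essentially a disjoint union of two genus one diagrams cannot be weakly alternating: Definition \ref{WAlterD} forces $\gamma_1$ to meet $\{\alpha_2,\beta_2,\gamma_2\}$ in the nonempty cyclic pattern $\alpha_2,\gamma_2,\beta_2,\gamma_2,\dots$, so the connected sums $L(n,m)\,\#\,L(l,1)$ must be produced, as in the paper, from genuinely intersecting diagrams whose branched set happens to be a composite link. Second, for $S^3_{l/1}(4_1)$, Dehn filling the standard genus two splitting of the figure eight knot exterior does give a genus two splitting, but verifying that some diagram of it is weakly alternating is essentially as hard as the original problem; the branched-cover route converts this into a checkable planar isotopy of tangle diagrams. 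Your plan is therefore a reasonable table of contents, but the decisive idea --- passing to the quotient by $\tau$ and working with three--bridge links and rational tangles --- is missing.
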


Here $l$ and $r$ can be all integers. In the second and third
classes if $l$ or $r$ is $0$, then we will get connected sums, not
Seifert fibred spaces. Notice that in each of the four classes there
are infinitely many 3--manifolds with $H_1(M,\mathbb{Z}_2)=0$.

By the same argument as in \cite{JNW}, one can show that all the $f$
we constructed are $\Omega$--stable, but are not structurally
stable. Theorem \ref{MAS} and \ref{MASP} convince us that there are
many more 3--manifolds admitting such ``symmetries with certain
stability''. Surely all the (weakly) alternating Heegaard diagrams
can give us many kinds of manifolds in the Thurston's picture of
3--manifolds. But at present we can only recognize a part of them.

In Section \ref{def}, we give some basic definitions, including the
handcuffs solenoid, alternating Heegaard diagram and alternating
Heegaard splitting. Then we give a brief introduction to the
construction of the $f \in Diff(M)$, appeared in \cite{JNW} and
\cite{MY}. Then we divide the proof of Theorem \ref{MAS} into two
steps:

In Section \ref{AD}, we will find all alternating Heegaard diagrams.

In Section \ref{DM}, we identify for special alternating Heegaard
diagrams which 3--manifolds they give, hence give a proof of Theorem
\ref{MAS}.

The discussion of weakly alternating Heegaard splitting (diagram)
will be parallel to the alternating case.

In Section \ref{Fexm}, we introduce weakly alternating Heegaard
splitting (diagram) and give a proof of Theorem \ref{wahws}. Then we
will find all weakly alternating Heegaard diagrams.

In Section \ref{WDM}, we identify for special weakly alternating
Heegaard diagrams which 3--manifolds they give, hence give a proof
of Theorem \ref{MASP}. In the end, we give some further remarks.

\section{Basic definitions and constructions}\label{def}

\subsection{Handcuffs solenoid and alternating Heegaard diagram}

All the Williams solenoids we considered will have the following
geometric model. For general definition and more details one can see
\cite{Wi}.

Let $N$ be a genus two handlebody with the $C^r(r\geq 1)$ ``disk
fibre bundle'' structure, fibred over the branched $C^r$ manifold
$K$, as in Figure \ref{fig:1}. Let $p$ denote the projection map
$N\rightarrow K$. We always suppose there is a Riemannian metric on
$N$.

\begin{figure}[h]
\centerline{\scalebox{0.5}{\includegraphics{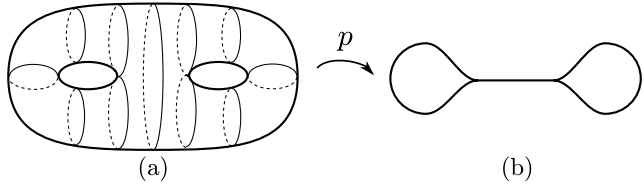}}} \caption{Disk
bundle and handcuffs}\label{fig:1}
\end{figure}

Suppose $f\co N\rightarrow N$ is a fibre preserving $C^r$ map such
that $f\co N\rightarrow f(N)$ is a diffeomorphism, and the induced
map $g\co K\rightarrow K$ is an immersion. We also require:

{\it Contracting condition on fibres:} for each fibre $D$, $f(D)$
lies in the interior of a fibre and
$\lim_{i\rightarrow\infty}Diameter(f^i(D))=0$.

{\it Expanding condition on $K$:} $g$ is an expansion and
$\Omega(g)=K$. More over, each point of $K$ has a neighborhood whose
image under $g$ is an arc.

Here the immersion $g$ is an expansion means that there is a
Riemannian metric ``$||\cdot||$'' on the tangent bundle $T(K)$ and
constants $C>0$, $\lambda>1$, such that $$||(Dg)^n(v)||\geq
C\lambda^n||v||, \forall n \in \mathbb{Z}^+, v\in T(K).$$

\begin{remark}
The Expanding condition can be required for self immersions of
general branched manifolds. In our case $K$ is like a handcuffs. Any
open set of $K$ will be mapped onto $K$ by $g^n$ for large $n$. Then
$g$ is an expansion implies $\Omega(g)=K$.
\end{remark}

Figure \ref{fig:2} is an example of such a $f$ and the corresponding
immersion $g$.

\begin{figure}[h]
\centerline{\scalebox{0.5}{\includegraphics{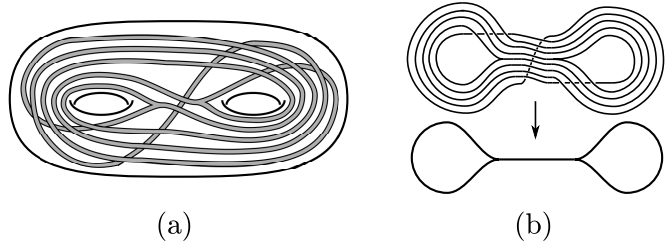}}}
\caption{Embedding and expansion}\label{fig:2}
\end{figure}

\begin{definition}
We call $\Lambda_f=\bigcap_{i=1}^{\infty}f^i(N)$ a handcuffs
solenoid with a defining neighborhood $N$ and a ``shift map''
$f|_{\Lambda_f}$.
\end{definition}

\begin{remark}
Let $\Sigma$ be the inverse limit of the sequence $K\leftarrow
K\leftarrow \cdots$ which is induced by $g$. For a point $a=(a_0,
a_1, a_2,\cdots)\in \Sigma$, we define $h(a)=(g(a_0), a_0,
a_1,\cdots)$. Then $h\co \Sigma \rightarrow \Sigma$ is a
homeomorphism. As the definition of Williams, $\Sigma$ is called the
solenoid with the shift map $h$. The dynamics $(\Lambda_f,
f|_{\Lambda_f})$ and $(\Sigma, h)$ are conjugate, by the
homeomorphism $P\co \Lambda_f\rightarrow \Sigma$, $P(x)=(p(x),
p(f^{-1}(x)), p(f^{-2}(x)), \cdots)$, $\forall x \in \Lambda_f$.
\end{remark}

\begin{definition}
A diagram $\mathcal{D}$ on an orientable closed surface $S$ is a
finite collection of simple closed curves intersecting transversely
in $S$.

Two diagrams $\mathcal{D}_1$ and $\mathcal{D}_2$ on $S$ are isotopic
if there is an isotopy of $S$ carries $\mathcal{D}_1$ to
$\mathcal{D}_2$. Isotopic diagrams will be thought as the same one.

Two diagrams $\mathcal{D}_1$ and $\mathcal{D}_2$ on $S$ are
homeomorphic, denoted by $\mathcal{D}_1\simeq\mathcal{D}_2$, if
there is a homeomorphism $h\co S\rightarrow S$ such that
$h(\mathcal{D}_1)=\mathcal{D}_2$.
\end{definition}

For any closed orientable 3--manifold $M$, there is an orientable
closed subsurface $S$ splitting $M$ into two handlebodies $N_1$ and
$N_2$. In this paper, we only consider the splitting with $S$ having
genus two. Hence for each $N_i$ we can find disjoint simple closed
curves $\alpha_i$, $\beta_i$, $\gamma_i$ in $S$ such that they all
bound disks in $N_i$, $\gamma_i$ is a separating curve, $\alpha_i$
and $\beta_i$ are non-separating and lie in different sides of
$\gamma_i$. Then $\{\alpha_1,\beta_1,\gamma_1\}$ together with
$\{\alpha_2,\beta_2,\gamma_2\}$ form a diagram on $S$.

\begin{definition}\label{AlterD}
We call the diagram
$\{\alpha_1,\beta_1,\gamma_1\}\cup\{\alpha_2,\beta_2,\gamma_2\}$ an
alternating Heegaard diagram if each curve of
$\{\alpha_i,\beta_i,\gamma_i\}$ intersects
$\{\alpha_j,\beta_j,\gamma_j\}$ in the cyclic order $$\alpha_j,
\gamma_j, \beta_j, \gamma_j, \alpha_j, \gamma_j, \beta_j, \gamma_j,
\cdots, i\neq j.$$

We call a Heegaard splitting alternating if it admits an alternating
Heegaard diagram.
\end{definition}

\begin{remark}
1. In the classical definition of Heegaard diagram, $\gamma_i$ may
be omitted.

2. The above definition of alternating Heegaard splitting coincides
with the definition of ``Alternating Heegaard splitting of type I''
in \cite{MY}.

3. If we just require $\{\alpha_1,\beta_1,\gamma_1\}$ are disjoint
simple closed curves in $S$ such that they intersect
$\{\alpha_2,\beta_2,\gamma_2\}$ as in Definition \ref{AlterD}, then
one can show that $\gamma_1$ must be separating, $\alpha_1$ and
$\beta_1$ are non-separating and lie in different sides of
$\gamma_1$.
\end{remark}

\begin{figure}[h]
\centerline{\scalebox{0.54}{\includegraphics{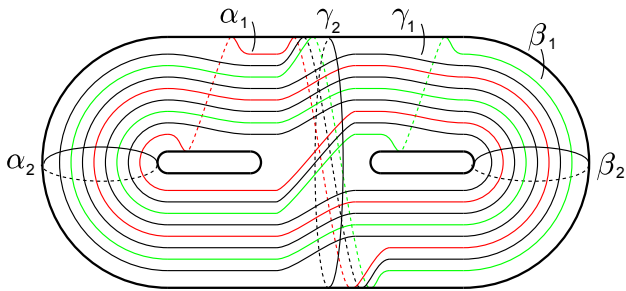}}}
\caption{Alternating Heegaard diagram}\label{fig:3}
\end{figure}

As an example, Figure \ref{fig:3} shows an alternating Heegaard
diagram. By the discussions in Section \ref{AD} and \ref{DM}, we
will see that this diagram gives us the Prism manifold $P(1,2)$.

\subsection{Construction of the diffeomorphism $f$}\label{consf}

Suppose $M=N_1\cup_{S} N_2$ is a genus two alternating Heegaard
splitting, having an alternating Heegaard diagram
$\{\alpha_1,\beta_1,\gamma_1\}\cup\{\alpha_2,\beta_2,\gamma_2\}$.
Then we can construct the required $f \in Diff(M)$ as following. For
more details one can see \cite{JNW} and \cite{MY}.

Firstly we give $N_i$ a ``disk fibre bundle'' structure, fibred over
the branched manifold $K$, such that $\alpha_i,\beta_i,\gamma_i$ are
all boundaries of fibres. Let $p_i$ be the corresponding projection
map. We choose a spine $K_i$ in $N_i$ as in Figure \ref{fig:w1}(a),
then $p_i|_{K_i}\co K_i\rightarrow K$ is an immersion as in Figure
\ref{fig:w1}(b).

\begin{figure}[h]
\centerline{\scalebox{0.6}{\includegraphics{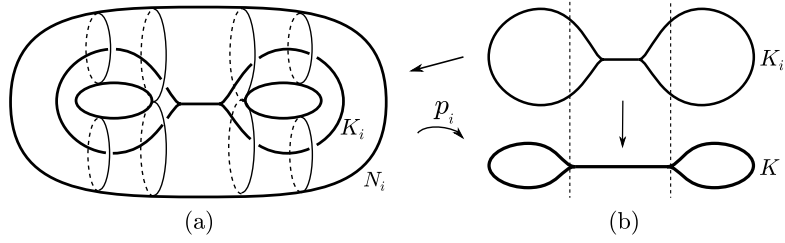}}}
\caption{Spine in $N_i$}\label{fig:w1}
\end{figure}

Then choose three points $x_\alpha,x_\beta,x_\gamma$ separately in
$\alpha_1\cap\alpha_2,\beta_1\cap\beta_2,\gamma_1\cap\gamma_2$, and
add three half twist bands between ``edges'' of $K_i$ and
$\alpha_j,\beta_j,\gamma_j$, $i\neq j$. The ``core'' of each band
should contain a chosen point and lie in the fibre. The half twists
from different sides should have the same ``direction''. Figure
\ref{fig:w2}(a) shows the three bands in $N_2$ and Figure
\ref{fig:w2}(b) shows that two bands from different sides intersect
at a chosen point.

\begin{figure}[h]
\centerline{\scalebox{0.6}{\includegraphics{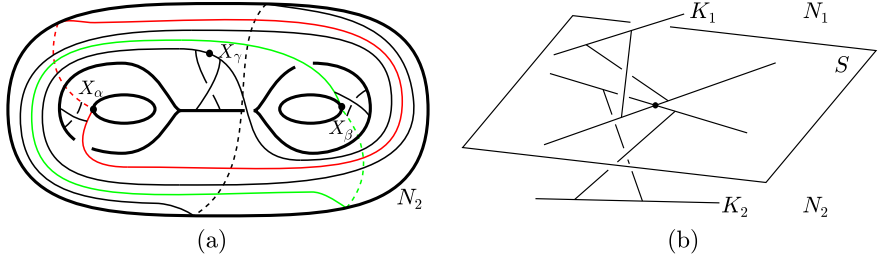}}}
\caption{Adding bands on spines}\label{fig:w2}
\end{figure}

We can get two new branched manifolds, and one of them is as in
Figure \ref{fig:w3}(a). We further push them into $N_i$ to get
$K_i'$ as in Figure \ref{fig:w3}(b). We can require that
$p_i|_{K_i'}\co K_i'\rightarrow K$ is also an immersion. Denote the
regular neighborhoods of $K_i$ and $K_i'$ by $N(K_i)$ and $N(K_i')$,
which are all contained in $N_i$ and have induced ``disk fibre
bundle'' structure. We construct the required $f \in Diff(M)$ in
three steps.

\begin{figure}[h]
\centerline{\scalebox{0.65}{\includegraphics{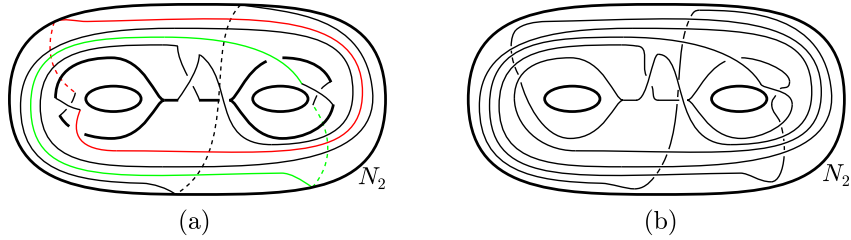}}} \caption{New
branched manifold in handlebody}\label{fig:w3}
\end{figure}

\noindent {\bf Step 1:} There is a $f_1 \in Diff(M)$ which is
isotopic to the identity, fixing $N(K_1')$ and on $N_2$ it satisfies
the {\it Contracting condition on fibres}, mapping $N_2$ to
$N(K_2)$, see Figure \ref{fig:w4}.

\begin{figure}[h]
\centerline{\scalebox{0.6}{\includegraphics{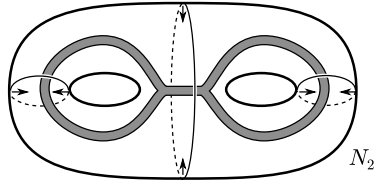}}}
\caption{Contraction on fibres}\label{fig:w4}
\end{figure}

\noindent {\bf Step 2:} Isotopy $K_2$ and its neighborhood $N(K_2)$
along the bands in $N_2$, see Figure \ref{fig:w5}(b). Since
$\alpha_2,\beta_2,\gamma_2$ bound disjoint disks in $N_2$, we can
then isotopy $K_1'$ and its neighborhood $N(K_1')$ along those
disks, see Figure \ref{fig:w5}(c). And we can further isotopy them
to the position as in Figure \ref{fig:w5}(d)

\begin{figure}[h]
\centerline{\scalebox{0.6}{\includegraphics{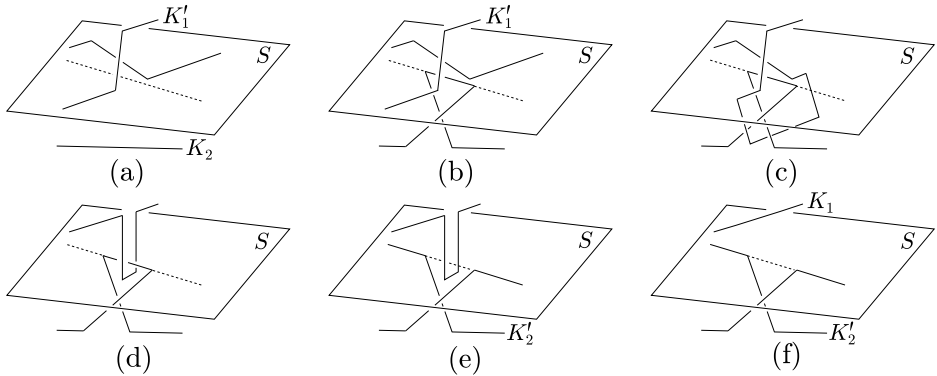}}}
\caption{Isotopy of $K_1$ and $K_2$}\label{fig:w5}
\end{figure}

Then since $\alpha_1,\beta_1,\gamma_1$ bound disjoint disks in
$N_1$, we can isotopy $K_2$ and $N(K_2)$ further along these disks
to $K_2'$ and $N(K_2')$, see Figure \ref{fig:w5}(e). And finally we
can isotopy $K_1'$ and $N(K_1')$ to $K_1$ and $N(K_1)$ to get $f_2
\in Diff(M)$, see Figure \ref{fig:w5}(f).

$f_2\mid_{N(K_1')}\co N(K_1')\rightarrow N(K_1)$ and
$f_2\mid_{N(K_2)}\co N(K_2)\rightarrow N(K_2')$ can be chosen to be
fibre preserving. If we let $g_1$ and $g_2$ denote their induced
maps on $K$, then $f_2$ can be further chosen such that $g_1^{-1}$
and $g_2$ satisfy the {\it Expanding condition on $K$}.

\begin{figure}[h]
\centerline{\scalebox{0.6}{\includegraphics{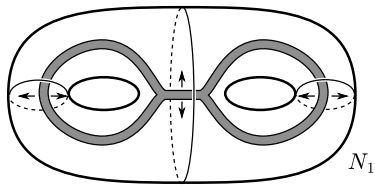}}}
\caption{Expansion on fibres}\label{fig:w6}
\end{figure}

\noindent {\bf Step 3:} There is a $f_3 \in Diff(M)$ which is
isotopic to the identity, fixing $N(K_2')$ and on $N_1$ its inverse
$f_3^{-1}$ satisfies the {\it Contracting condition on fibres},
mapping $N_1$ to $N(K_1)$. On $N(K_1)$ the map $f_3$ is as in Figure
\ref{fig:w6}.

Let $f=f_3\circ f_2\circ f_1 \in Diff(M)$, by the construction $f$
is isotopic to the identity. It is easy to see
$\Omega(f)=\bigcap_{i=1}^{\infty}f^i(N_2)\bigcup\bigcap_{i=1}^{\infty}f^{-i}(N_1)$
is the union of two Williams solenoids. And clearly the Williams
solenoids derived from alternating Heegaard splittings (defined as
in Definition \ref{AlterD}) are all handcuffs solenoids.

\section{Alternating Heegaard diagram}\label{AD}
Suppose
$\{\alpha_1,\beta_1,\gamma_1\}\cup\{\alpha_2,\beta_2,\gamma_2\}$ is
an alternating Heegaard diagram on a splitting surface $S$. We can
assume the curves $\{\alpha_2,\beta_2,\gamma_2\}$ are in the
standard position like in Figure \ref{fig:3}. We color the curves
$\{\alpha_1,\beta_1,\gamma_1\}$ separately by Red, Green and Black.
Then the Red(Green) curve is non-separating, the Black curve is
separating.

\begin{figure}[h]
\centerline{\scalebox{0.92}{\includegraphics{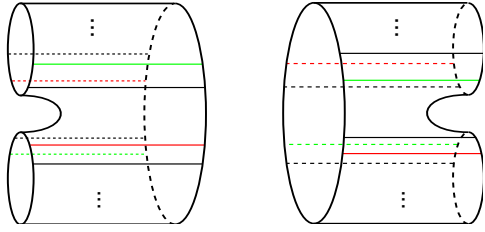}}} \caption{Two
3--punctured spheres}\label{fig:4}
\end{figure}

Cutting $S$ along $\{\alpha_2,\beta_2,\gamma_2\}$, we get two
3--punctured spheres $S_l$ and $S_r$. Since
$\{\alpha_1,\beta_1,\gamma_1\}$ intersect
$\{\alpha_2,\beta_2,\gamma_2\}$ in the cyclic order $\alpha_2$,
$\gamma_2$, $\beta_2$, $\gamma_2,\cdots$, the colored curves must be
cut into arcs lying in $S_l$ and $S_r$. And it can be
``straightened'' as in Figure \ref{fig:4}. Clearly colored arcs in
$S_l$ and $S_r$ have the same number. Since
$\{\alpha_2,\beta_2,\gamma_2\}$ intersect
$\{\alpha_1,\beta_1,\gamma_1\}$ in the cyclic order $\alpha_1$,
$\gamma_1$, $\beta_1$, $\gamma_1,\cdots$, this number can be divided
by $8$.

The original diagram can be obtained from Figure \ref{fig:4} by
pasting the cuts. There is a quite natural way to paste the cuts as
in Figure \ref{fig:5} which contains $4n(n>0)$ (non-colored)
parallel simple closed curves. Hence the original diagram can be
thought as obtained from Figure \ref{fig:5} by some ``twist''
operations.

\begin{figure}[h]
\centerline{\scalebox{0.48}{\includegraphics{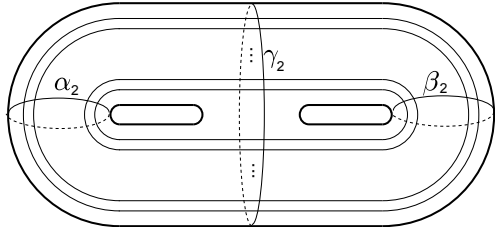}}}
\caption{Trivial diagram}\label{fig:5}
\end{figure}

\begin{definition}
Let $\mathcal{D}$ be a diagram on an oriented closed surface $S$,
$c$ is a simple closed curve in $S$ which intersects $\mathcal{D}$
transversely. Then we have a local picture as in Figure
\ref{fig:6}(a). The twist operation $\mathcal{T}_c$ on $\mathcal{D}$
is as in Figure \ref{fig:6}(b). It is invertible.

\begin{figure}[h]
\centerline{\scalebox{0.6}{\includegraphics{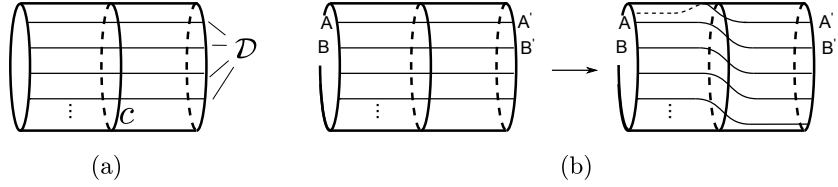}}} \caption{Local
model and twist operation}\label{fig:6}
\end{figure}

If we look from positive side of $S$, $\mathcal{T}_c$ is a right
hand shift along $c$, and $\mathcal{T}_c^{-1}$ is a left hand shift
along $c$.
\end{definition}

\begin{remark}\label{dehn}
$\mathcal{T}_c$ is an operation on diagrams. Do not confuse it with
the Dehn twist $t_c$, which is an automorphism of $S$ and normally
can be defined as in Figure \ref{fig:DT}. Out of the annulus
neighborhood of $c$, $t_c$ is the identity. On the annulus $t_c$ is
like a left hand $2\pi$--twist. Its inverse $t_c^{-1}$ is like a
right hand $2\pi$--twist.

\begin{figure}[h]
\centerline{\scalebox{0.6}{\includegraphics{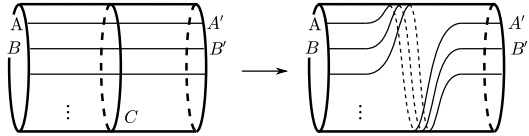}}}
\caption{Dehn twist}\label{fig:DT}
\end{figure}
\end{remark}

\begin{definition}
Define $D(4n;0,0,0)$ to be the diagram as in Figure \ref{fig:5}
which consists of $4n$ parallel curves and
$\{\alpha_2,\beta_2,\gamma_2\}$. Pushing each curve of
$\{\alpha_2,\beta_2,\gamma_2\}$ sightly to either side we get their
parallel curves $\{c_1,c_2,c_3\}$. Then $\mathcal{T}_{c_i}$ are
mutually commutative. Define $D(4n;m_1,m_2,m_3)$ to be
$\mathcal{T}_{c_1}^{m_1}\mathcal{T}_{c_2}^{m_2}\mathcal{T}_{c_3}^{m_3}(D(4n;0,0,0))$,
here $m_i$ are all integers.
\end{definition}

From the above discussion we have:

\begin{lemma}
Any alternating Heegaard diagram has the form $D(4n;m_1,m_2,m_3)$.
\end{lemma}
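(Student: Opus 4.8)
The plan is to make rigorous the informal reduction that was sketched in the paragraphs preceding the statement, turning the sequence of pictures into a precise combinatorial argument. The starting point is that an alternating Heegaard diagram $\{\alpha_1,\beta_1,\gamma_1\}\cup\{\alpha_2,\beta_2,\gamma_2\}$ is, by definition, determined up to isotopy by how the three curves $\alpha_1,\beta_1,\gamma_1$ sit inside the surface $S$ cut along $\{\alpha_2,\beta_2,\gamma_2\}$. First I would fix $\{\alpha_2,\beta_2,\gamma_2\}$ in the standard position of Figure \ref{fig:3}, so that cutting $S$ along these three curves yields two thrice-punctured spheres $S_l$ and $S_r$ (here one uses that $\gamma_2$ is separating and $\alpha_2,\beta_2$ lie on opposite sides of it, so the complement has exactly these two pieces). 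The curves $\alpha_1,\beta_1,\gamma_1$ become a collection of disjoint properly embedded arcs in $S_l\sqcup S_r$, each with endpoints on the boundary circles coming from $\alpha_2,\beta_2,\gamma_2$.

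Next I would pin down the combinatorics of these arcs using the two cyclic-order conditions. The condition that each of $\alpha_1,\beta_1,\gamma_1$ meets $\{\alpha_2,\beta_2,\gamma_2\}$ in the repeating cyclic pattern $\alpha_2,\gamma_2,\beta_2,\gamma_2,\dots$ forces every arc of the cut-open colored curves to run between a boundary circle labeled $\gamma_2$ and a boundary circle labeled $\alpha_2$ or $\beta_2$ — there are no $\alpha_2$-to-$\alpha_2$, $\beta_2$-to-$\beta_2$, $\alpha_2$-to-$\beta_2$, or $\gamma_2$-to-$\gamma_2$ arcs. Because a thrice-punctured sphere has, up to isotopy rel boundary, a very restricted set of disjoint essential arc systems (the arcs connecting two fixed boundary components are all parallel, and disjoint arcs are determined by how many copies of each of the three isotopy classes occur), the picture in $S_l$ and likewise in $S_r$ can be straightened to Figure \ref{fig:4}. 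The symmetric condition — that each of $\alpha_2,\beta_2,\gamma_2$ meets $\{\alpha_1,\beta_1,\gamma_1\}$ in the cyclic pattern $\alpha_1,\gamma_1,\beta_1,\gamma_1,\dots$ — then forces the number of colored arcs on each side to be a common positive multiple of $8$, hence the number of parallel ``strands'' in the reassembled picture is $4n$ for some integer $n>0$; this is exactly the diagram $D(4n;0,0,0)$ of Figure \ref{fig:5} before any twisting.

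Having identified the ``untwisted'' model, the remaining point is that reassembling $S$ from $S_l$ and $S_r$ along the three gluing annuli (neighborhoods of $\alpha_2,\beta_2,\gamma_2$) is determined, relative to the straightened arc picture, by three integers recording the amount of re-gluing offset along each of the three circles $\alpha_2,\beta_2,\gamma_2$. Changing the gluing offset along a curve parallel to $\alpha_2$ (resp.\ $\beta_2$, $\gamma_2$) by one unit is precisely the twist operation $\mathcal{T}_{c_1}$ (resp.\ $\mathcal{T}_{c_2}$, $\mathcal{T}_{c_3}$) applied to the whole diagram, where $\{c_1,c_2,c_3\}$ are the push-offs introduced in the definition of $D(4n;m_1,m_2,m_3)$; since the $c_i$ are pairwise disjoint, the $\mathcal{T}_{c_i}$ commute, so the total re-gluing is $\mathcal{T}_{c_1}^{m_1}\mathcal{T}_{c_2}^{m_2}\mathcal{T}_{c_3}^{m_3}$ for a unique triple $(m_1,m_2,m_3)\in\mathbb{Z}^3$. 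Therefore the original diagram is isotopic to $D(4n;m_1,m_2,m_3)$, which is the claim.

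I expect the main obstacle to be the middle step: carefully justifying that the disjoint colored arc systems in the two thrice-punctured spheres really can be isotoped simultaneously to the standard ``straight'' configuration of Figure \ref{fig:4}, and in particular that no essential arc with both endpoints on the same boundary circle, and no arc linking the three boundary circles in a more complicated braided fashion, can occur. This requires invoking the classification of essential arcs in a thrice-punctured sphere together with the constraint that the three families of arcs (red, green, black) are mutually disjoint and that their union closes up, across the gluing annuli, into three simple closed curves realizing the prescribed alternating cyclic pattern; checking that these constraints leave no configuration other than the $D(4n;m_1,m_2,m_3)$ family is the delicate part, and is where one must be most careful not to lose a case. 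The bookkeeping of the gluing offsets and their identification with the commuting twists $\mathcal{T}_{c_i}$ is then routine.
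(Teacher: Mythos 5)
Your argument is correct and follows essentially the same route as the paper: cut along $\{\alpha_2,\beta_2,\gamma_2\}$, use the two cyclic-order conditions to straighten the colored arcs in the two pairs of pants and to get the divisibility by $8$, and record the regluing offsets as powers of the commuting twists $\mathcal{T}_{c_1},\mathcal{T}_{c_2},\mathcal{T}_{c_3}$. In fact you spell out (via the classification of disjoint essential arcs in a thrice-punctured sphere) exactly the straightening step that the paper leaves to Figure \ref{fig:4}, so your write-up is, if anything, more complete than the original.
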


\begin{figure}[h]
\centerline{\scalebox{0.4}{\includegraphics{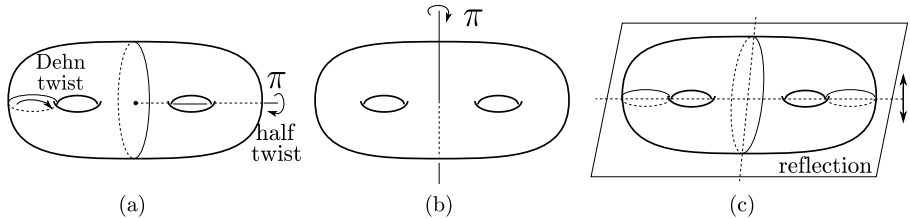}}}
\caption{Symmetries of diagrams}\label{fig:7}
\end{figure}

\begin{lemma}\label{sym}
For $D(4n;m_1,m_2,m_3)$, we have following homeomorphisms:

1. $D(4n;m_1,m_2,m_3)\simeq D(4n;m_1',m_2',m_3')$, $m_i\equiv m_i'
(mod\,4n)$.

2. $D(4n;m_1,m_2,m_3)\simeq D(4n;m_2,m_1,m_3)$.

3. $D(4n;m_1,m_2,m_3)\simeq D(4n;-m_1,-m_2,-m_3)$.
\end{lemma}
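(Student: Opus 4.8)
The plan is to exhibit, for each of the three assertions, an explicit homeomorphism of the splitting surface $S$ carrying one diagram to the other, using the symmetries already visible in the ``trivial diagram'' $D(4n;0,0,0)$ of Figure \ref{fig:5} together with the way the twist operations $\mathcal{T}_{c_i}$ interact with homeomorphisms of $S$. The guiding principle is the conjugation formula: if $h\co S\to S$ is a homeomorphism and $c$ is a simple closed curve meeting a diagram $\mathcal{D}$ transversely, then $h\bigl(\mathcal{T}_c^{m}(\mathcal{D})\bigr)=\mathcal{T}_{h(c)}^{\pm m}\bigl(h(\mathcal{D})\bigr)$, where the sign is $+$ if $h$ preserves the local orientation near $c$ and $-$ if it reverses it (this is immediate from the local model in Figure \ref{fig:6}, since $\mathcal{T}_c$ is defined as a right-hand shift as seen from the positive side). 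So it suffices in each case to produce an $h$ that permutes or fixes the curves $\{c_1,c_2,c_3\}$ appropriately, sends the bundle of $4n$ parallel curves to itself, and has the correct effect on orientations.

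For part 1, the homeomorphism is the identity: $D(4n;m_1,m_2,m_3)$ and $D(4n;m_1',m_2',m_3')$ are literally \emph{isotopic} (not merely homeomorphic) when $m_i\equiv m_i' \pmod{4n}$, because performing $4n$ successive twist operations $\mathcal{T}_{c_i}^{4n}$ along a curve $c_i$ that crosses exactly the $4n$ parallel curves once each returns the diagram to an isotopic copy of itself — the net effect is to slide each strand once around the bundle. I would make this precise by following a single strand through the $4n$ crossings and observing the resulting curve is isotopic to the original in $S$; alternatively, one notes $\mathcal{T}_{c_i}^{4n}$ acts on the diagram the same way as the Dehn twist $t_{c_i}^{\mp 1}$ composed with an isotopy, and $t_{c_i}$ is isotopic to the identity after the corresponding reglueing — but the cleanest route is the direct strand-chasing argument. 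For part 2, I would use the homeomorphism of $S$ that interchanges the two ``handles'' carrying $\alpha_2$ and $\beta_2$ while fixing the handle neighborhood of $\gamma_2$; in the standard picture of Figure \ref{fig:7} this is the evident left–right symmetry swapping the $\alpha$-side and the $\beta$-side. It sends $c_1\leftrightarrow c_2$, fixes $c_3$, preserves the set of $4n$ parallel curves, and preserves orientation, so it carries $D(4n;m_1,m_2,m_3)$ to $D(4n;m_2,m_1,m_3)$ by the conjugation formula. For part 3, I would use the orientation-reversing involution of $S$ (the ``hyperelliptic-type'' reflection through the plane of the page in Figure \ref{fig:7}) that fixes each of $\alpha_2,\beta_2,\gamma_2$ setwise and hence fixes each $c_i$ setwise, but reverses the positive side of $S$; by the sign in the conjugation formula each exponent $m_i$ changes to $-m_i$, giving $D(4n;m_1,m_2,m_3)\simeq D(4n;-m_1,-m_2,-m_3)$.

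The only genuinely delicate point is verifying that the chosen homeomorphisms really do fix the trivial diagram $D(4n;0,0,0)$ (up to isotopy) and carry the parallel push-offs $c_i$ to the claimed curves with the claimed orientation behavior; this is where one must look carefully at Figure \ref{fig:5} and Figure \ref{fig:7} rather than argue abstractly, since the $4n$ parallel curves sit in a specific pattern relative to $\alpha_2,\beta_2,\gamma_2$. I expect this bookkeeping — tracking which side of $S$ is ``positive'' under each symmetry and confirming the symmetry extends over the $4n$-curve bundle without introducing extra twisting — to be the main obstacle, but it is routine once the pictures are set up. The rest follows formally from the conjugation formula for $\mathcal{T}_c$ under homeomorphisms.
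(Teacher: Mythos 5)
Your treatment of parts 2 and 3 matches the paper's: the paper realizes these homeomorphisms by a $\pi$--rotation swapping the $\alpha_2$-- and $\beta_2$--handles and by a reflection, after first arranging the arcs of the trivial diagram symmetrically in $S_l$ and $S_r$ (``paste the cuts symmetrically'') so that these symmetries preserve $D(4n;0,0,0)$ and conjugate the $\mathcal{T}_{c_i}$ exactly as your sign formula predicts. The bookkeeping you flag as the delicate point is precisely that symmetric arrangement, so those two parts are fine.

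Part 1, however, contains a genuine error. The diagrams $D(4n;m_1,m_2,m_3)$ and $D(4n;m_1+4n,m_2,m_3)$ are \emph{not} isotopic in general, and the identity is not the homeomorphism realizing the equivalence. ``Sliding each strand once around the bundle'' --- which is what $\mathcal{T}_{c_1}^{4n}$ does when $c_1$ meets the diagram in $4n$ points --- is exactly the effect of the Dehn twist $t_{c_1}^{\pm1}$ on the diagram, and this changes the isotopy classes of the colored curves: if a curve $\delta$ meets $c_1$ essentially, then $i\bigl(t_{c_1}(\delta),\delta\bigr)\geq i(\delta,c_1)^2>0$, whereas an isotopic copy would have geometric intersection number $0$ with $\delta$. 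The correct statement, and the one the paper uses, is that $\mathcal{T}_{c_1}^{4n}(\mathcal{D})=t_{c_1}^{\pm1}(\mathcal{D})$, so the two diagrams are homeomorphic \emph{via the Dehn twist} --- which is all the lemma claims. Your alternative remark gestures at this but then garbles it (``$t_{c_i}$ is isotopic to the identity after the corresponding reglueing'' is not meaningful for diagrams on a fixed surface), and you discard it in favor of the false isotopy claim. A second, related slip: each of the $4n$ parallel curves meets $\gamma_2$, hence $c_3$, \emph{twice}, so $c_3$ carries $8n$ strands and $\mathcal{T}_{c_3}^{4n}$ is only a \emph{half} twist along $c_3$, not a full Dehn twist; this is still realized by a homeomorphism of $S$ (a fractional twist supported in an annulus about $c_3$), but only because the $8n$ crossing points can be arranged $\pi$--symmetrically --- again the reason the paper insists on the symmetric placement of the arcs. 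With ``isotopic via the identity'' replaced by ``homeomorphic via $t_{c_1}$, $t_{c_2}$, and the half twist along $c_3$'', part 1 goes through and coincides with the paper's argument.
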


\begin{proof}
We can put arcs in $S_l$ and $S_r$ in a symmetric way as in Figure
\ref{fig:4}, then paste the cuts ``symmetrically'' to obtain the
diagrams. These homeomorphisms can be obtained by Dehn twist(half
twist), $\pi$--rotation and reflection as in Figure \ref{fig:7}.
\end{proof}

We also have the following lemma which can be easily proved.

\begin{lemma}\label{mirror}
The Dehn twist(half twist), $\pi$--rotation and reflection as in
Figure \ref{fig:7} map an alternating Heegaard diagram to an
alternating Heegaard diagram.
\end{lemma}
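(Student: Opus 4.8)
The plan is to verify directly that each of the three operations depicted in Figure~\ref{fig:7} — the half twist (a power of a Dehn twist along one of the curves $\gamma_i$), the $\pi$--rotation, and the reflection — carries the combinatorial intersection pattern defining an alternating Heegaard diagram to the same pattern. The key observation is that the alternating condition of Definition~\ref{AlterD} is purely a statement about the cyclic order in which the curves of $\{\alpha_i,\beta_i,\gamma_i\}$ meet those of $\{\alpha_j,\beta_j,\gamma_j\}$; any homeomorphism of $S$ preserves transverse intersection points and the cyclic order of intersections along each curve (up to reversal), so it suffices to track what each of the three symmetries does to the six curves as an unordered configuration and check the order is unchanged.

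First I would handle the half twist. Since this is (a power of) a Dehn twist $t_{\gamma_1}$ or $t_{\gamma_2}$ along a separating curve, and $\gamma_i$ is disjoint from $\alpha_i,\beta_i$ and meets only the curves of the opposite handlebody, I would note that the twist fixes $\gamma_i$ setwise, permutes (or fixes) $\alpha_i,\beta_i$, and slides the arcs of the opposite family across $\gamma_i$. Using the description in Figure~\ref{fig:4} of the two $3$--punctured spheres $S_l$, $S_r$, the twist simply re-pastes the arcs along the cut $\gamma_i$ in a shifted but still ``symmetric'' fashion, so the cyclic order $\alpha_j,\gamma_j,\beta_j,\gamma_j,\dots$ read along any curve is preserved. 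Next, the $\pi$--rotation and the reflection are even more transparent: by the symmetric placement of the arcs in $S_l$ and $S_r$ one can arrange that these are genuine isometries of the picture, each preserving $\{\alpha_1,\beta_1,\gamma_1\}$ and $\{\alpha_2,\beta_2,\gamma_2\}$ as sets (possibly swapping $\alpha_i\leftrightarrow\beta_i$), hence visibly preserving the alternating intersection pattern. The reflection reverses orientation, which reverses the cyclic order, but a reversed copy of $\alpha_j,\gamma_j,\beta_j,\gamma_j,\dots$ is again of the same form since the pattern has period $4$ and is palindromic around $\gamma_j$.

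Then I would assemble these three cases into the statement, and remark that since an arbitrary alternating Heegaard diagram has the form $D(4n;m_1,m_2,m_3)$ by the preceding lemma, and Lemma~\ref{sym} already records that these symmetries act on the parameters $(4n;m_1,m_2,m_3)$ within the family of such diagrams, the image is automatically again alternating. The only genuine point requiring care — the ``main obstacle'' — is bookkeeping the effect of the half twist on the arcs crossing $\gamma_i$: one must confirm that shifting the gluing does not, for instance, merge two curves of the configuration into one or create a tangency, i.e.\ that the six curves remain six disjoint-within-each-family simple closed curves meeting transversely with the prescribed cyclic pattern. This is exactly the content already implicit in the construction of $D(4n;m_1,m_2,m_3)$ via the commuting twist operations $\mathcal{T}_{c_i}$, so once that normal form is in hand the verification is routine and the lemma follows.
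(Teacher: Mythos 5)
Your argument is correct and is exactly the verification the paper leaves implicit (the paper merely states the lemma ``can be easily proved'' and supplies no proof): each of the three operations is a homeomorphism of $S$ preserving each family $\{\alpha_i,\beta_i,\gamma_i\}$ up to swapping $\alpha_i\leftrightarrow\beta_i$ and reversing cyclic order, and the pattern $\alpha_j,\gamma_j,\beta_j,\gamma_j,\dots$ is invariant under both of these, as you note. Your ``main obstacle'' is in fact vacuous: the operations in question are the Dehn twists $t_c$ of Remark~\ref{dehn} (genuine homeomorphisms of $S$), not the combinatorial twist operations $\mathcal{T}_c$ on diagrams, so curves can neither merge nor become tangent and that bookkeeping is automatic.
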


%Following we determine whether a diagram $D(4n;m_1,m_2,m_3)$ can be
%an alternating Heegaard diagram.

\begin{theorem}\label{all ad}
The diagram $D(4n;m_1,m_2,m_3)$ is an alternating Heegaard diagram
if and only if $(m_1,m_2,m_3)=\eta+4(k_1,k_2,k_3)$, here $\eta$ is
one of the following integral vectors $\pm(1,-3,1)$, $\pm(1,-5,2)$,
$k_i$ are all integers and satisfy $(n,k_1+k_2+2k_3)=1$.
\end{theorem}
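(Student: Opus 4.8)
The plan is to reduce the classification of alternating Heegaard diagrams among the family $D(4n;m_1,m_2,m_3)$ to a purely local and then arithmetic condition. First I would examine what the twist operations $\mathcal{T}_{c_i}$ do to the combinatorics of the intersection pattern near the three special points $x_\alpha$, $x_\beta$, $x_\gamma$. Starting from the trivial diagram $D(4n;0,0,0)$ of Figure \ref{fig:5}, the curves $\{\alpha_1,\beta_1,\gamma_1\}$ (the $4n$ parallel curves, suitably reassembled) and $\{\alpha_2,\beta_2,\gamma_2\}$ do not intersect in the alternating cyclic pattern $\alpha_j,\gamma_j,\beta_j,\gamma_j,\dots$; the twists $\mathcal{T}_{c_1}^{m_1}\mathcal{T}_{c_2}^{m_2}\mathcal{T}_{c_3}^{m_3}$ are precisely the freedom available to correct this. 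So the first key step is a bookkeeping lemma: track how each application of $\mathcal{T}_{c_i}$ permutes the cyclic order in which an $\alpha_1$-arc (resp.\ $\beta_1$-, $\gamma_1$-arc) meets $\{\alpha_2,\gamma_2,\beta_2,\gamma_2\}$ as it runs once around $S$. This is a finite local computation done in the 3--punctured spheres $S_l,S_r$ of Figure \ref{fig:4}; I expect the outcome to be that the "defect" from the alternating pattern is governed by the residues of $m_1,m_2,m_3$ modulo $4$, which is why the answer has the shape $\eta + 4(k_1,k_2,k_3)$.

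Second, having isolated that the mod-$4$ class of $(m_1,m_2,m_3)$ must equal one of finitely many admissible vectors, I would enumerate those vectors directly. Using the symmetries of Lemma \ref{sym} — swapping $m_1\leftrightarrow m_2$ (item 2) and global sign change (item 3) — one cuts the search down to a handful of candidate mod-$4$ classes, and a case check on the local pictures (again in $S_l\cup S_r$, pasting the cuts as in Figure \ref{fig:7}) shows exactly $\pm(1,-3,1)$ and $\pm(1,-5,2)$ survive, all others producing a curve that either fails to close up, fails to be separating where it must be, or fails the cyclic-order condition. Lemma \ref{mirror} guarantees this search is closed under the moves used. This enumeration step — checking that no other residue class works — is the part requiring genuine care, since it is easy to miss a case; I would organize it by the value of $m_3 \bmod 4n$ controlling how the two non-separating curves interleave around the separating one.

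Third, I would pin down the arithmetic side condition $(n,k_1+k_2+2k_3)=1$. This should come from the requirement that the collection $\{\alpha_1,\beta_1,\gamma_1\}$ actually consists of \emph{disjoint} simple closed curves bounding disks on one side with $\gamma_1$ separating and $\alpha_1,\beta_1$ non-separating on opposite sides (the conditions spelled out before Definition \ref{AlterD}, together with Remark 3 after it). Concretely, after the twists the "$4n$ parallel curves" must recombine into exactly three curves of the correct separating/non-separating type rather than into more components; this connectivity is a gcd condition on how the twisting permutes the $4n$ strands, and I expect the permutation to be a product of cycles whose lengths are controlled by $n$ and $k_1+k_2+2k_3$ — the coefficient $2$ on $k_3$ reflecting that the $\gamma$-twist acts on strands on both sides of the separating curve. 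The diagram is a genuine alternating Heegaard diagram exactly when this permutation has the right cycle structure, i.e.\ when $(n,k_1+k_2+2k_3)=1$.

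The main obstacle I anticipate is the first step: setting up the bookkeeping for $\mathcal{T}_{c_i}$ on the cyclic intersection order in a way that is both honest about the half-twist bands and clean enough to read off the mod-$4$ constraint. Once that local dictionary is in hand, the enumeration of $\eta$ and the gcd condition follow by finite casework and a permutation-counting argument, respectively. I would present the computation pictorially (as the paper's style suggests, via Figures \ref{fig:4}--\ref{fig:7}) rather than through heavy formulas, reserving the algebra for the final gcd statement.
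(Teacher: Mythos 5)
Your proposal matches the paper's proof in all essentials: cut along $\{\alpha_2,\beta_2,\gamma_2\}$, determine the admissible mod-$4$ residue vectors $\eta$ by a finite colour-matching case analysis in $S_l\cup S_r$ normalized via the symmetries of Lemmas \ref{sym} and \ref{mirror}, and derive $(n,k_1+k_2+2k_3)=1$ from the requirement that the strands close up into connected curves --- the paper realizes your ``permutation-counting'' step as transitivity of a dihedral group generated by two reflections on $RP^1$, where the angle between the reflection lines is $\pm(k_1+k_2+2k_3)\pi/n$. The only piece you leave implicit is the converse check that, under the gcd condition, the strands assemble into exactly one separating Black curve and two non-separating curves on opposite sides, which the paper settles with a short Euler-characteristic argument (Figure \ref{fig:12}).
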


\begin{proof}
{\bf The Only If Part:}

Suppose $D(4n;m_1,m_2,m_3)$ is alternating. Cut $S$ along
$\{\alpha_2,\beta_2,\gamma_2\}$ as before, then we get $S_l$ and
$S_r$. We first look at $S_l$.

\begin{figure}[h]
\centerline{\scalebox{0.63}{\includegraphics{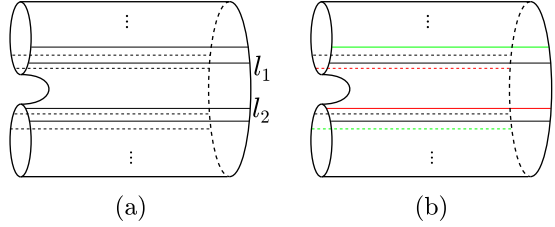}}}
\caption{Uncolored and colored left surfaces}\label{fig:8}
\end{figure}

By Definition \ref{AlterD}, it is easy to see that one of $l_1$ and
$l_2$ must be Black. By Lemma \ref{sym} and \ref{mirror}, we can
assume $l_1$ is Black, otherwise we consider the reflection image of
this diagram. Then we can further assume $l_2$ is Red, otherwise we
recolor the curves $\alpha_1$ and $\beta_1$. Hence by Definition
\ref{AlterD}, $S_l$ should be as in Figure \ref{fig:8}(b).

\begin{figure}[h]
\centerline{\scalebox{0.63}{\includegraphics{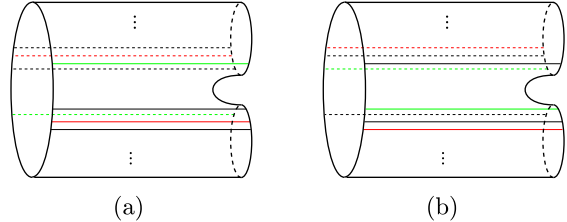}}} \caption{Two
possible right surfaces}\label{fig:9}
\end{figure}

The situation of $S_r$ should be similar. If we cut $S_l$ and $S_r$
further along all Black arcs, then only the piece containing the
saddle can contain two arcs with the same color Red or Green. Other
pieces are all rectangles containing only one arc. Hence for $S_r$
the piece containing the saddle must contain two Green arcs,
otherwise $\beta_1$ will be parallel to $\gamma_1$. Then by
Definition \ref{AlterD} we have two possibilities of $S_r$ as in
Figure \ref{fig:9}.

{\bf Case 1:} $S_r$ is as in Figure \ref{fig:9}(a).

We fix a base position $\eta=(1,-3,1)$. This means that if before
cutting along $\{\alpha_2,\beta_2,\gamma_2\}$ the diagram is
$D(4n;1,-3,1)$, then colors of the arcs will be coincident at the
cuts. $D(4n;m_1,m_2,m_3)$ can be obtained from $D(4n;1,-3,1)$ by
twist operations, hence clearly $(m_1,m_2,m_3)=\eta+4(k_1,k_2,k_3)$.

In $S$, colored curves intersect $\gamma_2$ at $8n$ points, $2n$ Red
points, $2n$ Green points and $4n$ Black points, along $\gamma_2$ in
the cyclic order Red, Black, Green, Black, $\cdots$. Looking at
$\gamma_2$ from left to right, give the Red(Green) points which
belong to the saddle piece a symbol $0$($0'$) and other Red(Green)
points symbols $1,2,\cdots,n-1$($1',2',\cdots,n-1'$) clockwise, then
the picture will be as in Figure \ref{fig:10}(a), $\bar{k}_3\equiv
k_3(mod\, n)$, $0\leq\bar{k}_3<n$.

\begin{figure}[h]
\centerline{\scalebox{0.9}{\includegraphics{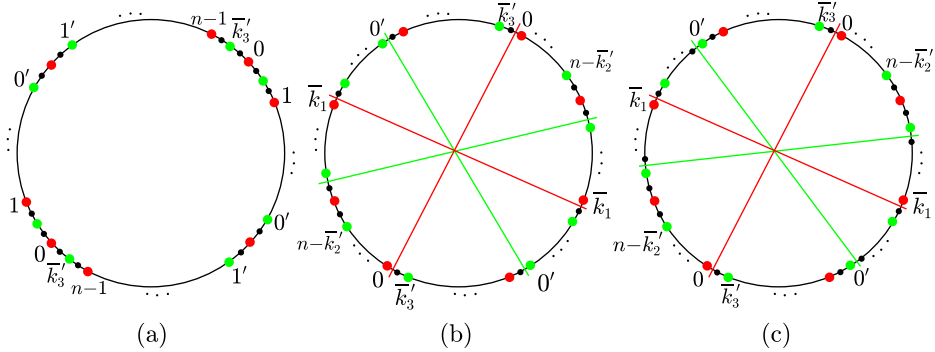}}}
\caption{Equivalence relation on points in $\gamma_2$}\label{fig:10}
\end{figure}

We define an equivalence relation on the Black(Red, Green) points in
$\gamma_2$, which is generated by the following two relations:

$\widetilde{R}_l$: two Black(Red, Green) points are equivalent if
arcs in $S_l$ containing them have a common boundary in $\alpha_2$.

$\widetilde{R}_r$: two Black(Red, Green) points are equivalent if
arcs in $S_r$ containing them have a common boundary in $\beta_2$.

\begin{figure}[h]
\centerline{\scalebox{0.8}{\includegraphics{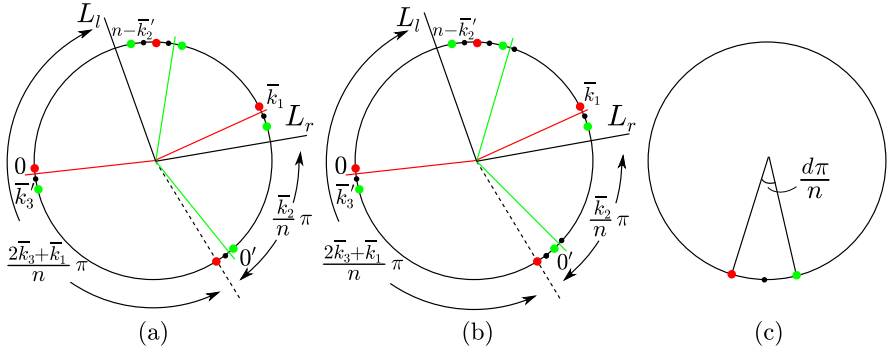}}}
\caption{Reflections and quotient space}\label{fig:11}
\end{figure}

There are four open sectors with Red(Green) boundaries as in Figure
\ref{fig:10}(b). Here $\bar{k}_i\equiv k_i(mod\, n)$,
$0<\bar{k}_2\leq n$, $0\leq\bar{k}_1,\bar{k}_3<n$, the Red(Green)
lines pass through the midpoints of the Red(Green) points and their
neighbor Black points. It can be checked that reflections on
$\gamma_2$ which interchange two Red(Green) non-adjacent sectors
give us $\widetilde{R}_l(\widetilde{R}_r)$ on the Black(Red, Green)
points in those sectors.

Then the equivalence relation induces an equivalence relation on
Black(Red, Green) points in $RP^1$. Here the $RP^1$ is obtained by
identifying antipodal points of $\gamma_2$. This induced equivalence
relation is generated by two reflections $R_l$ and $R_r$ with
reflection lines $L_l$ and $L_r$ as in Figure \ref{fig:11}(a).

By the connectedness of the Black(Red, Green) curve, all Black(Red,
Green) points in $\gamma_2$ are equivalent. Hence the dihedral group
generated by $R_l$ and $R_r$ acts transitively on the Black(Red,
Green) points in $RP^1$. In Figure \ref{fig:11}(a), if we let
$\theta$ denote the angle between $L_l$ and $L_r$, then we have
$\theta\equiv\pm(k_1+k_2+2k_3)\pi/n (mod\,\pi)$. Notice that $L_l$
and $L_r$ only pass through Red or Green points.

\begin{claim}
The group generated by $R_l$ and $R_r$ acts transitively on
Black(both Red and Green) points in $RP^1$ if and only if $(n,
k_1+k_2+2k_3)=1$.
\end{claim}

\begin{proof}[Proof of Claim] Let $(n, k_1+k_2+2k_3)=d$, then after
modular the group action we get a corner with boundaries contain Red
or Green points and having angle $d\pi/n$. Hence the group acts
transitively on Black(both Red and Green) points if and only if
$d=1$, see Figure \ref{fig:11}(c).
\end{proof}

Hence we finish the discussion of {\bf Case 1}.

{\bf Case 2:} $S_r$ is as in Figure \ref{fig:9}(b).

We fix a base position $\eta=(1,-5,2)$ similar to {\bf Case 1}. Then
as above we have $(m_1,m_2,m_3)=\eta+4(k_1,k_2,k_3)$. The following
discussion is exactly the same as in {\bf Case 1}, except that
instead of Figure \ref{fig:10}(b) and Figure \ref{fig:11}(a) we will
get Figure \ref{fig:10}(c) and Figure \ref{fig:11}(b), and we have
$(n, k_1+k_2+2k_3)=1$.

{\bf The If Part:}

Suppose $(m_1,m_2,m_3)=\eta+4(k_1,k_2,k_3)$, here $\eta$ is one of
$\pm(1,-3,1)$, $\pm(1,-5,2)$ and $(n,k_1+k_2+2k_3)=1$. By
Lemma\ref{sym} and \ref{mirror}, we can assume $\eta$ is $(1,-3,1)$
or $(1,-5,2)$. Cut $D(4n;m_1,m_2,m_3)$ along
$\{\alpha_2,\beta_2,\gamma_2\}$, then we get $S_l$ and $S_r$.

Clearly we can color the arcs in $S_l$ as in Figure \ref{fig:8}(b).
Then we can color $S_r$ as in Figure \ref{fig:9}(a) or Figure
\ref{fig:9}(b) according to $\eta$ is $(1,-3,1)$ or $(1,-5,2)$. Then
the colors of those arcs will coincide at points in
$\{\alpha_2,\beta_2,\gamma_2\}$. And we can have equivalence
relations on Black(Red, Green) points in $\gamma_2$ and Black(Red,
Green) points in $RP^1$ as in the proof of {\bf The Only If Part}.

Since $(n,k_1+k_2+2k_3)=1$, by the {\bf Claim} all Black(Red, Green)
points in $RP^1$ are equivalent. Hence there are at most two
equivalence classes of Black(Red, Green) points in $\gamma_2$. And
in $S$ we have at most two Black(Red, Green) curves.

\begin{figure}[h]
\centerline{\scalebox{0.45}{\includegraphics{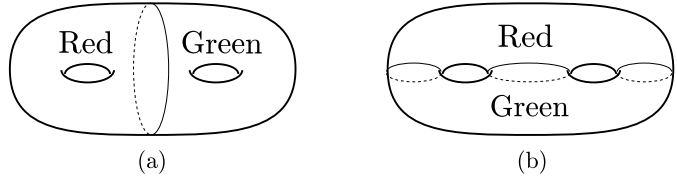}}} \caption{Red
and Green surfaces}\label{fig:12}
\end{figure}

Notice that there is a pair of Red(Green) antipodal points in
$\gamma_2$ lying in the saddle piece of $S_l(S_r)$. Hence the union
of pieces containing the Red(Green) arcs is a connected subsurface
in $S$, with Euler characteristic $-1$. Hence there are two possible
cases as in Figure \ref{fig:12}.

Since there are at most two Black curves, we meet the case Figure
\ref{fig:12}(a), and there is only one Black curve which is
separating. Then the Red(Green) curve is non-separating because the
two sides of it can be connected by a parallel curve of the Black
curve. Hence there are only one Red curve and one Green curve, both
non-separating.
\end{proof}

\section{Manifolds with alternating Heegaard
splittings}\label{DM}

\begin{definition}\label{admani}
Let $\eta_1=(1,-3,1)$, $\eta_2=(1,-5,2)$. Define
$M_i(n;k_1,k_2,k_3)$ to be the 3--manifold which has an alternating
Heegaard diagram $D(4n;m_1,m_2,m_3)$ with
$(m_1,m_2,m_3)=\eta_i+4(k_1,k_2,k_3)$, $i=1,2$. Here $n>0$ and
$(n,k_1+k_2+2k_3)=1$.
\end{definition}

\begin{lemma}\label{pare}
If a 3--manifold $M$ admits an alternating Heegaard splitting, then
$M$ must be homeomorphic to some $M_i(n;k_1,k_2,k_3)$ with the
inequalities $0<k_2\leq n$, $0\leq k_3<n$ and $n\leq
k_1+k_2+2k_3<2n$.
\end{lemma}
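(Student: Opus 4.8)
The plan is to feed the complete list of alternating diagrams produced by Theorem~\ref{all ad} into the symmetries of Lemma~\ref{sym} and push the parameter vector $(k_1,k_2,k_3)$ into the stated box one coordinate at a time. By Theorem~\ref{all ad} the given alternating diagram is $D(4n;m_1,m_2,m_3)$ with $(m_1,m_2,m_3)=\eta+4(k_1,k_2,k_3)$, where $\eta\in\{\pm(1,-3,1),\pm(1,-5,2)\}$ and $(n,k_1+k_2+2k_3)=1$.

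\emph{Reducing to a positive base vector.} If $\eta$ is one of the positive vectors $\eta_1=(1,-3,1)$, $\eta_2=(1,-5,2)$, then by Definition~\ref{admani} the underlying manifold is already $M_i(n;k_1,k_2,k_3)$ for the matching $i$. If instead $\eta=-\eta_i$, then by the third homeomorphism in Lemma~\ref{sym} the diagram is homeomorphic to $D(4n;-m_1,-m_2,-m_3)$, whose parameter vector is $\eta_i+4(-k_1,-k_2,-k_3)$; since the homeomorphisms of Lemma~\ref{sym} are induced by homeomorphisms of the splitting surface that respect (or interchange) the two sides, homeomorphic diagrams yield homeomorphic 3--manifolds, so $M\cong M_i(n;-k_1,-k_2,-k_3)$. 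Either way $M\cong M_i(n;k_1,k_2,k_3)$ for some $i\in\{1,2\}$ and some integers $k_1,k_2,k_3$ with $(n,k_1+k_2+2k_3)=1$.

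\emph{Normalizing the coordinates.} The first homeomorphism in Lemma~\ref{sym} says $D(4n;m_1,m_2,m_3)$ is unchanged up to homeomorphism when any single $m_j$ is altered by a multiple of $4n$; since $m_j=(\eta_i)_j+4k_j$, this is exactly the freedom to replace any one $k_j$ by an integer congruent to it modulo $n$, without changing $i$ or the homeomorphism type of $M$. First pick the representative of $k_2$ in $\{1,\dots,n\}$, so $0<k_2\le n$; then the representative of $k_3$ in $\{0,\dots,n-1\}$, so $0\le k_3<n$ (this alters only $m_3$, leaving $k_2$ fixed); finally, with $s=k_1+k_2+2k_3$, replacing $k_1$ by $k_1+tn$ sends $s$ to $s+tn$ and affects neither $k_2$ nor $k_3$, so choose $t$ with $n\le s<2n$. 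Such a $t$ exists because $[n,2n)$ contains exactly one integer in the residue class of $s$ modulo $n$, and these shifts preserve $\gcd(n,s)$, so the condition $(n,s)=1$ is retained. This gives $0<k_2\le n$, $0\le k_3<n$, $n\le k_1+k_2+2k_3<2n$, as required.

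\emph{Where care is needed.} There is essentially no hard analytic or topological content; Theorem~\ref{all ad} has done the real work, and the rest is bookkeeping. The one genuine check is in the first step: one must verify that negation really sends a diagram of base type $-\eta_i$ to one of base type $\eta_i$ (and not to $\eta_{3-i}$), which is immediate by comparing the components of $\pm(1,-3,1)$ and $\pm(1,-5,2)$ modulo $4$ --- the four residue patterns $(1,1,1),(3,3,3),(1,3,2),(3,1,2)$ are distinct and negation pairs them as $\eta_1\leftrightarrow-\eta_1$, $\eta_2\leftrightarrow-\eta_2$. The only other subtlety is that in the normalization the coordinates must be adjusted in a non-interfering order: $k_2$ and $k_3$ first (they occupy independent slots), and $k_1$ last, used solely to place $k_1+k_2+2k_3$ in $[n,2n)$. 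I do not expect the $m_1\leftrightarrow m_2$ symmetry (item~2 of Lemma~\ref{sym}) to be needed for this lemma; it should come into play later, when the diagrams are matched with specific manifolds.
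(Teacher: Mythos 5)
Your proposal is correct and follows the same route as the paper: the paper's proof is just the two-sentence observation that homeomorphic diagrams give homeomorphic manifolds and that Theorem~\ref{all ad} plus Lemma~\ref{sym} yield the normal form, and your write-up simply fills in the bookkeeping (negation to reduce $-\eta_i$ to $\eta_i$, then the mod-$n$ adjustments of $k_2$, $k_3$, $k_1$ in that order). The details check out, including the preservation of $(n,k_1+k_2+2k_3)=1$ under the shifts.
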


\begin{proof}
If two Heegaard diagrams are homeomorphic, then they give the
homeomorphic 3--manifolds. Then by Theorem \ref{all ad} and Lemma
\ref{sym} we get the results.
\end{proof}

Following we identify some of $M_1(n;k_1,k_2,k_3)$ and
$M_2(n;k_1,k_2,k_3)$ as in Lemma \ref{pare} to our familiar
3--manifolds. Notice that every alternating Heeagaard diagram admits
an involution $\tau$ which preserves the Black(Red, Green) curve, as
in Figure \ref{fig:13}.

\begin{figure}[h]
\centerline{\scalebox{0.4}{\includegraphics{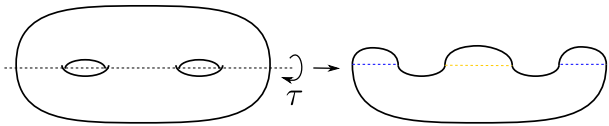}}}
\caption{Involution and branched cover}\label{fig:13}
\end{figure}

\begin{proposition}\label{branch}
The $M_1(n;k_1,k_2,k_3) (M_2(n;k_1,k_2,k_3))$ as in Lemma \ref{pare}
is a 2--fold branched cover of $S^3$. The branched set is a three
bridge link. It consists of a Blue two bridge link and a Yellow
trivial circle as in Figure \ref{fig:14}(a)(Figure \ref{fig:14}(b)).

\begin{figure}[h]
\centerline{\scalebox{0.55}{\includegraphics{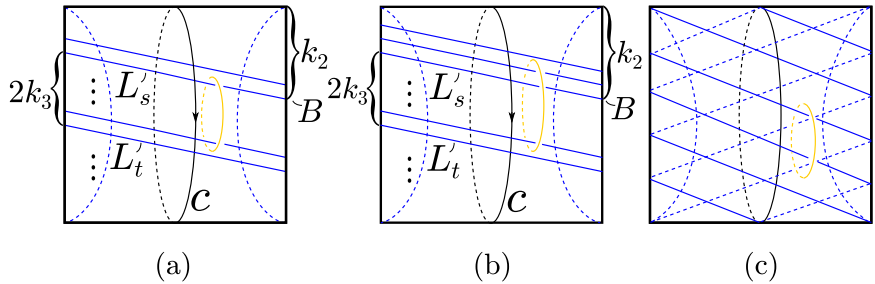}}}
\caption{Three bridge links}\label{fig:14}
\end{figure}

The front(back) Blue arcs lying in the surface of the $n\times n$
square pillow have slope $-m/n(m/n)$, $m=k_1+k_2+2k_3-n$. In the
front square, walking from the point $B$ to left we get the arc
$L_s$. Walking along the oriented circle $c$ from $L_s$ by $2k_3$ we
get the arc $L_t$. And then the position of the Yellow circle can be
determined. As an example, Figure \ref{fig:14}(c) shows the
corresponding branched set of $M_1(5;2,3,1)$.
\end{proposition}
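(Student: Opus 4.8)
The proof rests entirely on the involution $\tau$ recorded just above the statement, so the first task is to promote $\tau$ from the Heegaard surface $S$ to all of $M=N_1\cup_S N_2$. Since $\tau$ preserves the separating curve $\gamma_i$, one can arrange that it preserves the meridian disk bounded by $\gamma_i$; cutting $N_i$ along that disk yields two solid tori, on each of which $\tau$ preserves a meridian (the disk bounded by $\alpha_i$ on one side, by $\beta_i$ on the other). An involution of a solid torus preserving a meridian disk extends over the solid torus, so after regluing $\tau$ extends over each $N_i$, and the two extensions agree on $S$; thus $\tau$ is an involution of $M$. This is the structural part and should be routine.

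Next I would identify the quotient. By Riemann--Hurwitz, $\chi(S)=2\chi(S/\tau)-|\mathrm{Fix}(\tau)\cap S|$, and since the quotient orbifold is $S^2$ (as the picture shows), $\tau$ has exactly six fixed points on $S$. The quotient of a solid torus by the relevant involution is a $3$--ball whose fixed set is a pair of unknotted, unlinked arcs, so regluing along the quotient disks gives $N_i/\tau\cong B^3$ with fixed set a trivial three--strand tangle; gluing the two balls along $S/\tau\cong S^2$ gives $M/\tau\cong S^3$. Hence $M\to S^3$ is a double cover branched over the link $L=$ image of $\mathrm{Fix}(\tau)$, and $L$ meets the gluing $2$--sphere in six points, exhibiting it as a union of two three--strand tangles, i.e.\ a link of bridge number at most three. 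Which of the six fixed points and which fixed arcs come from a neighborhood of the Black curve versus the Red/Green curves determines the coloring: the Black (separating) curve contributes the Yellow component, and the Red and Green curves together with the $4n$ parallel curves assemble into the Blue component; I would check directly from the planar picture of $D(4n;m_1,m_2,m_3)$ that the Blue component is two--bridge and the Yellow one is an unknot.

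The heart of the matter, and the step I expect to be the main obstacle, is reading off $L$ precisely from the combinatorics $(m_1,m_2,m_3)=\eta_i+4(k_1,k_2,k_3)$. The region of $S$ swept out by the $4n$ parallel curves descends to the $n\times n$ ``square pillow'', and the twist parameters translate into the slope of the front and back Blue arcs: one must verify that this slope is $\mp m/n$ with $m=k_1+k_2+2k_3-n$, tracking carefully the half--twist bands and orientations so the sign is correct. Similarly, following the image of a neighborhood of the Black curve shows that sliding from $L_s$ along the oriented circle $c$ by $2k_3$ lands on $L_t$, which pins down the Yellow circle, with the base point $B$ fixed by the symmetry of Figure \ref{fig:13}. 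Finally I would sanity-check the whole recipe by unwinding $M_1(5;2,3,1)$ step by step and matching the output to Figure \ref{fig:14}(c), and note that the analogous bookkeeping for $\eta_2=(1,-5,2)$ produces Figure \ref{fig:14}(b). The only genuinely delicate points are the orientation and half-twist conventions governing the slope sign and the $2k_3$ shift, and confirming that no unexpected clasping appears so that the Blue component really is two--bridge.
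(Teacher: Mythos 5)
Your argument follows the same route as the paper's proof: extend the involution $\tau$ of Figure \ref{fig:13} over both handlebodies using the preserved meridian systems, observe that each quotient is a ball containing a trivial three--strand tangle so that $M/\tau\cong S^3$ with a three--bridge branch link, and then read off the Blue slopes and the position of the Yellow circle from the twist parameters via the $n\times n$ square pillow. The differences are cosmetic --- the paper obtains $N_2/\tau$ directly by cutting along the $\alpha_2$-- and $\beta_2$--disks to get a cylinder and then the pillow rather than invoking Riemann--Hurwitz, and your deferral of the slope and $2k_3$--offset computation to a direct check matches the level of detail of the paper's own figure--based verification.
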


\begin{proof}
Let $M_1(n;k_1,k_2,k_3)=N_1\cup_{S}N_2$ as before. On $N_2$ the
branched cover is given by the involution $\tau$ as in Figure
\ref{fig:13}. It induces a branched cover of the Black(Red, Green)
curve. On $\gamma_2$ it is a $\pi$--rotation and on $\alpha_2$ and
$\beta_2$ it is a reflection. These reflections are essentially the
$R_l$ and $R_r$ defined on $RP^1$ in the proof of Theorem \ref{all
ad}, see Figure \ref{fig:11}. Since the reflection lines $L_l$ and
$L_r$ only pass Red or Green points, we know that on the Black curve
$\tau$ is a $\pi$--rotation and on the Red(Green) curve $\tau$ is a
reflection.

Cut $N_2$ open along disks bounded by $\alpha_2$ and $\beta_2$.
Modular the involution, then we get a cylinder as in Figure
\ref{fig:15}(a). Then we can paste the left and right disks by
modular the reflections to get $N_2/\tau$, an $n\times n$ square
pillow as in Figure \ref{fig:15}(b). With suitable twists, we can
require that the front arcs have slope $-m/n$ and the back arcs have
slope $m/n$. And the position of the Yellow arc is as in Figure
\ref{fig:14}(a).

\begin{figure}[h]
\centerline{\scalebox{1.7}{\includegraphics{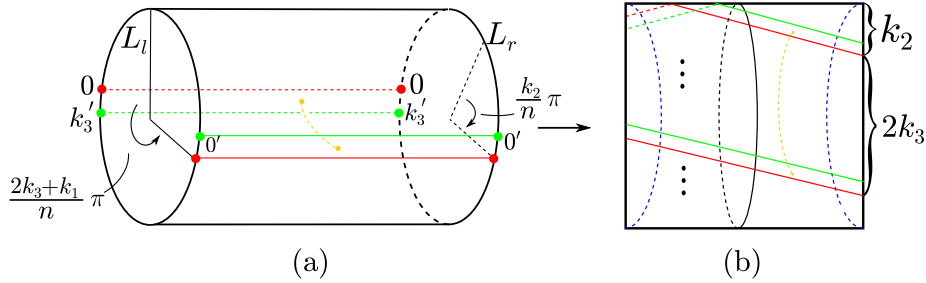}}}
\caption{Cylinder and square pillow}\label{fig:15}
\end{figure}

Let $N_1$ be as in Figure \ref{fig:16}(a). We can extend $\tau$ to a
$\pi$--rotation(reflection) on the disk bounded in $N_1$ by the
Black(Red, Green) curve. Hence we can further extend $\tau$ to the
whole $N_1$, and get the $N_1/\tau$ as in Figure \ref{fig:16}(b).

\begin{figure}[h]
\centerline{\scalebox{0.44}{\includegraphics{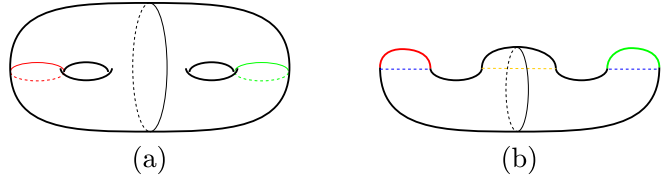}}}
\caption{Involution on $N_1$}\label{fig:16}
\end{figure}

Clearly $M_1(n;k_1,k_2,k_3)/\tau=N_1/\tau\cup_{S/\tau}N_2/\tau$ is a
$S^3$ with branched set a three bridge link that consists of a Blue
link and a Yellow circle. We can push the Blue arcs in $N_2/\tau$
across the disks to the Red and Green arcs, then the Yellow arc is
just a trivial arc in $N_2/\tau$. For $M_2(n;k_1,k_2,k_3)$ the
discussion is similar, and we finish the proof.
\end{proof}

\begin{proposition}\label{Idmfd}
Suppose $M_1(n;k_1,k_2,k_3)(M_2(n;k_1,k_2,k_3))$ is as in Lemma
\ref{pare} and $m=k_1+k_2+2k_3-n$. We have the following
homeomorphisms:

1. $M_1(n;k_1,k_2,0)\simeq L(n,m)\,\#\,S^1\times S^2$, $0\leq m<n$.

2. $M_2(n;k_1,k_2,0)\simeq L(n,m)\,\#\,L(2,1)$, $0\leq m<n$.

3. $M_1(n;0,n-m,m)\simeq P(m,n)$, $0<m<n$.

4. $M_2(n;m-1,n-2m+1,m)\simeq S^2(-1/2,1/4,m/n)$, $0<m<n/2$.
\end{proposition}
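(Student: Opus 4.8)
The plan is to take as input Proposition \ref{branch}, which realises $M_i(n;k_1,k_2,k_3)$ as the double branched cover $\Sigma_2(S^3,L_i)$ of the three--bridge link $L_i$ of Figure \ref{fig:14}: a Blue two--bridge tangle on an $n\times n$ pillow with front and back slopes $\mp m/n$, where $m=k_1+k_2+2k_3-n$, together with a Yellow circle whose position is read off from walking $2k_3$ times along the oriented circle $c$. The whole proposition then becomes a sequence of four link simplifications, after which we apply the classical dictionary for double branched covers: (i) $\Sigma_2$ of a two--bridge link of slope $m/n$ is $L(n,m)$; (ii) $\Sigma_2$ of a split union $L'\sqcup L''$ is $\Sigma_2(L')\,\#\,\Sigma_2(L'')\,\#\,(S^1\times S^2)$; (iii) $\Sigma_2$ of a connected sum $L'\,\#\,L''$ is $\Sigma_2(L')\,\#\,\Sigma_2(L'')$; (iv) $\Sigma_2$ of a Montesinos link with rational tangles of slopes $q_1/p_1,q_2/p_2,q_3/p_3$ is the Seifert fibred space $S^2(q_1/p_1,q_2/p_2,q_3/p_3)$ in the notation of this paper.

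For parts 1 and 2 put $k_3=0$. Then the walk of length $2k_3$ is empty, $L_t=L_s$, and the Yellow circle bounds an evident disk. In the $M_1$--picture Figure \ref{fig:14}(a) this disk is disjoint from the Blue tangle, so $L_1$ is the split union of a two--bridge link of slope $m/n$ with an unknot; by (i) and (ii), $M_1(n;k_1,k_2,0)\simeq L(n,m)\,\#\,S^3\,\#\,(S^1\times S^2)=L(n,m)\,\#\,S^1\times S^2$. In the $M_2$--picture Figure \ref{fig:14}(b) the configuration differs by one extra clasp of the Yellow circle with a Blue strand; this clasp is cut off by a sphere meeting $L_2$ in two points, exhibiting $L_2$ as the connected sum of the same two--bridge link with a Hopf link, so by (i) and (iii), $M_2(n;k_1,k_2,0)\simeq L(n,m)\,\#\,L(2,1)$. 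In both cases the link, hence the manifold, depends only on $m=k_1+k_2-n$, and the range $0\le m<n$ is the condition $n\le k_1+k_2<2n$ of Lemma \ref{pare}.

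For parts 3 and 4 the Yellow circle is essentially linked, and the task is to redraw $L_i$ as a Montesinos link with three rational tangles. The pillow supplies the first tangle: after identifying its front and back squares along their common boundary of slope $m/n$ it becomes the $m/n$ rational tangle. The Yellow circle, which sits in the saddle region and links the Red and Green strands according to the shift $2k_3$, resolves into the other two tangles. For $(k_1,k_2,k_3)=(0,n-m,m)$ in the $M_1$--picture these are the tangles of slopes $-1/2$ and $1/2$, while for $(k_1,k_2,k_3)=(m-1,n-2m+1,m)$ in the $M_2$--picture the extra twisting promotes one of them to the $1/4$ tangle and leaves the other $-1/2$. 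Applying (i) and (iv) gives $M_1(n;0,n-m,m)\simeq S^2(-1/2,1/2,m/n)=P(m,n)$ and $M_2(n;m-1,n-2m+1,m)\simeq S^2(-1/2,1/4,m/n)$; the constraints $0<m<n$ and $0<m<n/2$ are exactly $0<k_2\le n$, $0\le k_3<n$, $n\le k_1+k_2+2k_3<2n$ from Lemma \ref{pare} together with $(m,n)=1$. As consistency checks, part 3 recovers all the Prism manifolds, and part 4 with $m/n=1/3$ recovers the truncated--cube space $S^2(-1/2,1/4,1/3)$ of \cite{MY}.

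The main obstacle is the explicit isotopy of the Yellow circle in parts 3 and 4: one must push the ``walk $2k_3$ along $c$'' data of Proposition \ref{branch} through the pillow identification and verify that it produces precisely the $\pm 1/2$, resp. $1/4$, rational tangles with the correct signs, so that the resulting Seifert fibration has Euler--number contribution $-1/2$ rather than $+1/2$ (and $1/4$ rather than $3/4$ or $-1/4$). This is a finite but delicate piece of tangle calculus; once the diagrams $L_i$ are correctly identified with the two--bridge, connected--sum, or Montesinos links above, the identification of the 3--manifolds is immediate from (i)--(iv). The example $M_1(5;2,3,1)$ drawn in Figure \ref{fig:14}(c) gives a concrete testing ground for the bookkeeping.
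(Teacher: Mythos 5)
Your proposal is correct and follows essentially the same route as the paper: starting from Proposition \ref{branch}, push the Yellow circle into the square pillow, flip the inside and outside of the pillow to exhibit the $(m,n)$--rational tangle, and then read off the branched set as a connected sum of a two--bridge link with a two--component unlink or Hopf link (parts 1 and 2) or as a Montesinos link with tangle parameters $(-1,2),(1,2),(m,n)$ and $(-1,2),(1,4),(m,n)$ (parts 3 and 4), finishing with the standard double--branched--cover dictionary from \cite{BZ}. The ``delicate tangle calculus'' you flag is exactly what the paper carries out diagrammatically in Figures \ref{fig:17}, \ref{fig:18} and \ref{fig:20}.
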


\begin{proof}
The proof depends on Proposition \ref{branch} and the fact that the
2--fold branched cover of a Montesinos link is a Seifert fibred
space. Moreover, a $(m,n)$--rational tangle corresponds to a
singular fibre with invariant $m/n$. This can be found, for example,
in Chapter 11 and 12 of \cite{BZ}.

Following we identify the 2--fold branched cover of the
corresponding links of $M_1$ and $M_2$ in the Proposition.
Considering Figure \ref{fig:14}, since the Yellow arc in $N_1/\tau$
is a trivial arc, we can push it into $S/\tau$ disjoint from the
Blue arcs. Then we further push it into the square pillow. Hence it
is contained in a smaller box, as in Figure \ref{fig:17}(a).

\begin{figure}[h]
\centerline{\scalebox{0.45}{\includegraphics{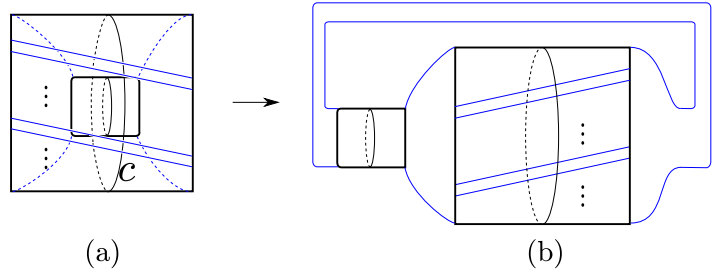}}} \caption{Two
boxes}\label{fig:17}
\end{figure}

After a $\pi$--rotation about the circle $c$ we change the outside
and inside of the square pillow, and we redraw it as in Figure
\ref{fig:17}(b). Now the left box contains the Yellow circle and two
Blue arcs, and the right box is exactly a $(m,n)$--rational tangle.

\begin{figure}[h]
\centerline{\scalebox{0.4}{\includegraphics{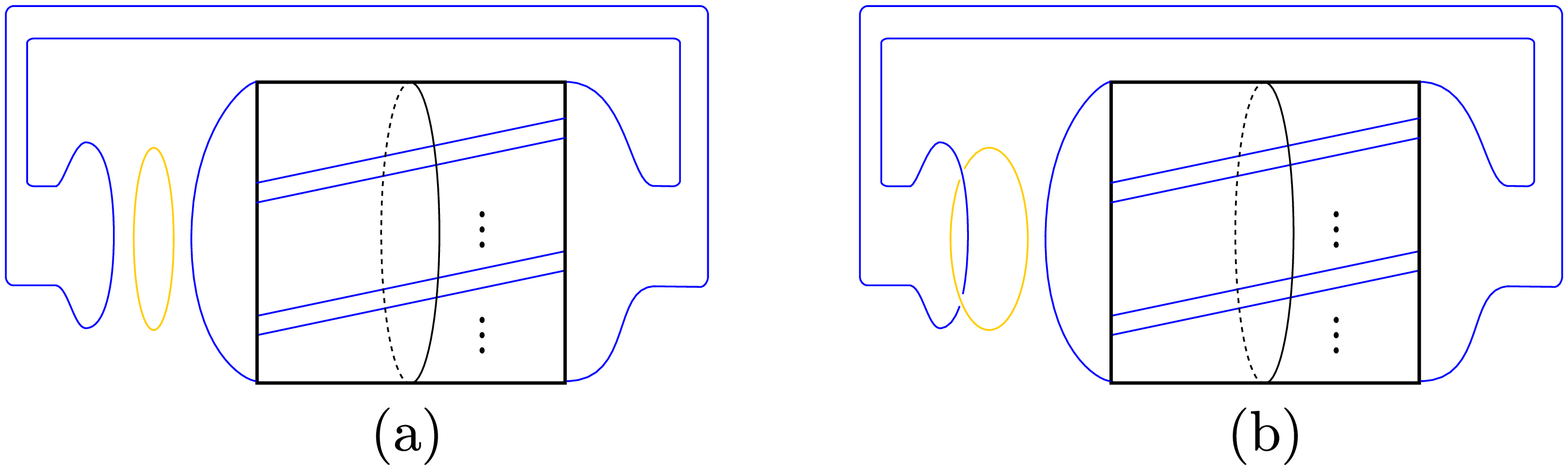}}}
\caption{Connected sums}\label{fig:18}
\end{figure}

When $k_3=0$, the picture is as in Figure \ref{fig:18}. The three
bridge link can be written as a connected sum of a two bridge link
and a 2--component trivial link (or a Hopf link). The connected sum
of links corresponds to the connected sum of their 2--fold branched
covers. The 2--fold branched cover of a 2--component trivial link
(or a Hopf link) is $S^1\times S^2$ (or $RP^3$). And the 2--fold
branched cover of the blue two bridge link in Figure \ref{fig:18}(a)
is $L(n,m)$. Hence we get the first two homeomorphisms.

To show the last two homeomorphisms, we redraw the corresponding
links in Figure \ref{fig:20}. Figure \ref{fig:20}(a)(c) give us the
pictures when we push the Yellow arc in $N_1/\tau$ into $S/\tau$.
Figure \ref{fig:20}(b)(d) show us how the links will be look like
after we do the procedure as in Figure \ref{fig:17}.

\begin{figure}[h]
\centerline{\scalebox{0.5}{\includegraphics{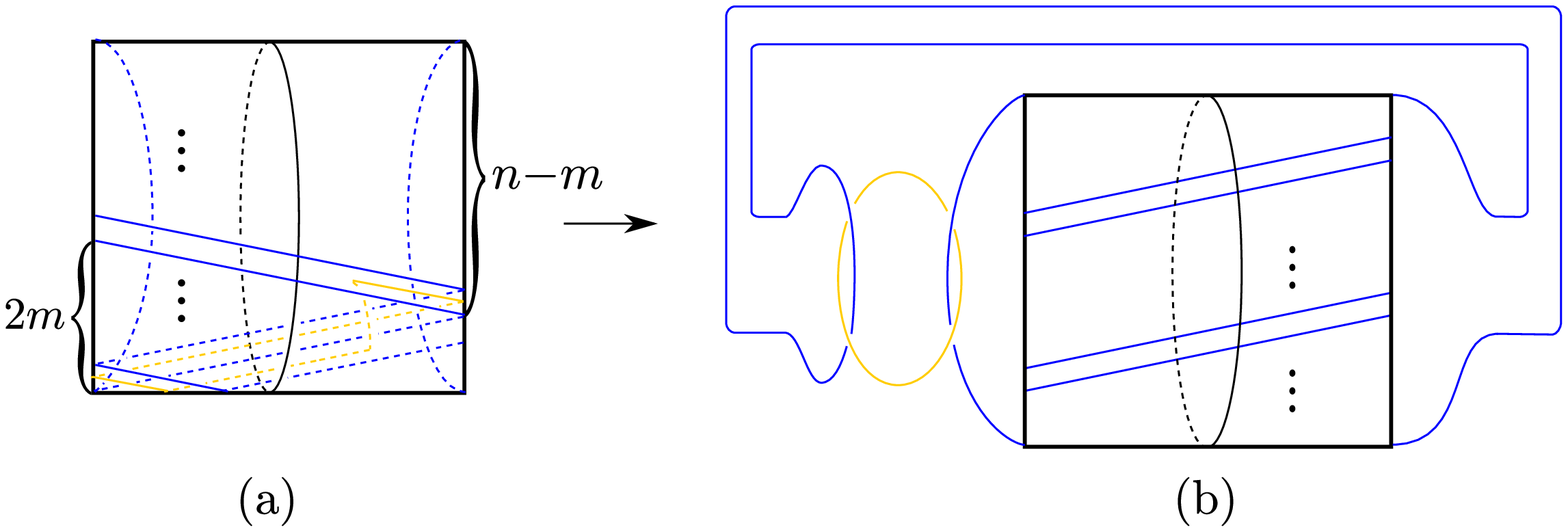}}}
%\caption{}\label{fig:19}
\end{figure}

\begin{figure}[h]
\centerline{\scalebox{0.5}{\includegraphics{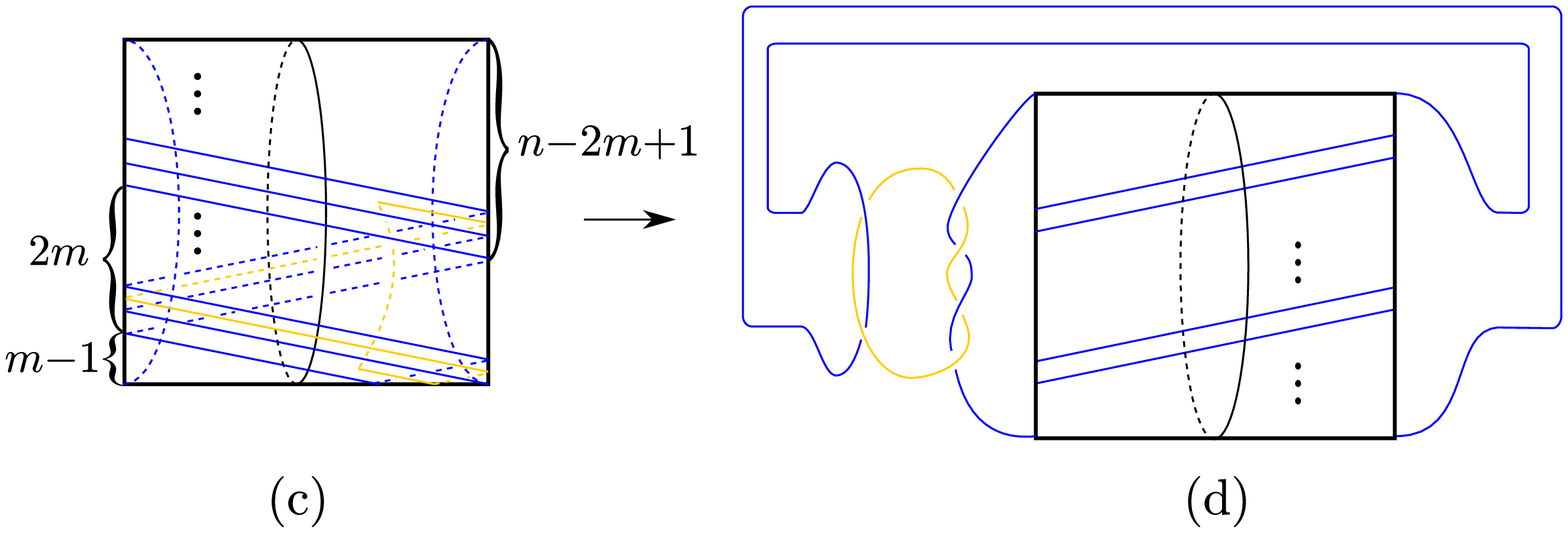}}}
\caption{Montesinos links}\label{fig:20}
\end{figure}

These two links are all Montesinos links with three rational
tangles. For Figure \ref{fig:20}(b), the three rational tangles have
parameters $(-1,2)$, $(1,2)$ and $(m,n)$. And for Figure
\ref{fig:20}(d), the three rational tangles have parameters
$(-1,2)$, $(1,4)$ and $(m,n)$. Hence 2--fold branched covers of
these two links are all Seifert fibred spaces, and the invariants
are exactly as in the Proposition.
\end{proof}

\begin{proposition}\label{hyperbolicM}
$M_2(n;0,n-3,2)(n\geq 5)$ has a 2--fold cover which is homeomorphic
to some Dehn surgery on the hyperbolic link $6^2_3$.
$M_2(n;0,n-3,2)$ are all hyperbolic 3--manifolds, except for
finitely many $n$.
\end{proposition}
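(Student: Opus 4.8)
The plan is to combine the branched--cover description of Proposition \ref{branch} with Thurston's hyperbolic Dehn surgery theorem, after first passing to an auxiliary double cover. Specialize Proposition \ref{branch} to $(k_1,k_2,k_3)=(0,n-3,2)$: here $m=k_1+k_2+2k_3-n=0+(n-3)+4-n=1$, so the front and back Blue arcs in the $n\times n$ pillow carry slope $\mp 1/n$, and the Yellow circle occupies the position obtained by shifting $2k_3=4$ along the oriented circle $c$. Drawing the resulting three--bridge link $L_n$, one sees that its dependence on $n$ is concentrated in a single twist region (coming from the $1/n$ slope on the pillow): $L_n$ is obtained from a fixed link by inserting $n$ full twists in one band. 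Hence $M_2(n;0,n-3,2)$ is obtained from $M_2(n_0;0,n_0-3,2)$ by $\pm 1/(n-n_0)$--surgery along a fixed knot $\kappa$; equivalently $M_2(n;0,n-3,2)=W(\gamma_n)$ where $W=M_2(n_0;0,n_0-3,2)\setminus N(\kappa)$ is a fixed cusped $3$--manifold and the filling slopes $\gamma_n$ run over a line in the slope parameter of $\partial W$, so $\gamma_n\to\infty$ as $n\to\infty$.

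Next I would replace the $3$--bridge branched covering by an honest Dehn surgery picture by going to a double cover. Since $M_2(n;0,n-3,2)$ admits an alternating Heegaard splitting, $H_1(M_2(n;0,n-3,2);\mathbb{Z}_2)\neq 0$, so it has a connected $2$--fold cover $\widetilde M_n$. Concretely, the Yellow component $Y\subset L_n$ is an unknot, and the $2$--fold branched cover of $(S^3,Y)$ is again $S^3$; inside the $\mathbb{Z}_2\oplus\mathbb{Z}_2$ cover of $S^3\setminus L_n$ sending meridians of the Blue and Yellow parts to $(1,0)$ and $(0,1)$, completed to a branched cover, one finds on one hand the diagonal quotient $M_2(n;0,n-3,2)$ and on the other hand $\widetilde M_n$ realized as a $2$--fold branched cover of $S^3$ over the link $L'_n$ obtained by ``unwrapping'' the Blue part across $Y$. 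The link $L'_n$ again differs from a fixed link only by twists in one region, so $\widetilde M_n=\widetilde W(\widetilde\gamma_n)$ for the corresponding cover $\widetilde W$ of $W$; carrying out the tangle and Kirby--calculus simplifications then identifies $\widetilde W$ with the exterior of the hyperbolic link $6^2_3$ with one of its two cusps filled along a fixed slope. Thus $\widetilde M_n$ is a Dehn surgery on $6^2_3$ in which exactly one filling slope varies (and tends to $\infty$).

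Finally, for hyperbolicity: $6^2_3$ is hyperbolic, and one checks directly that filling one of its cusps at the fixed slope above produces a one--cusped hyperbolic manifold $\widetilde W$; by Thurston's hyperbolic Dehn surgery theorem all but finitely many fillings $\widetilde W(\widetilde\gamma_n)$ are hyperbolic, and since $\widetilde\gamma_n\to\infty$ this gives $\widetilde M_n$ hyperbolic for all but finitely many $n$. A closed $3$--manifold finitely covered by a hyperbolic manifold is itself hyperbolic, so $M_2(n;0,n-3,2)$ is hyperbolic for all but finitely many $n$.

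The main obstacle is the second paragraph: choosing the right double cover of $M_2(n;0,n-3,2)$ and proving it is the $2$--fold branched cover of $S^3$ over a link which, after tangle manipulation, is an explicit Dehn surgery on $6^2_3$. This is the pictorial heart of the argument -- one must track how the tangle decomposition of $L_n$ lifts through the $\mathbb{Z}_2\oplus\mathbb{Z}_2$ cover and then recognize the resulting fixed cusped manifold, in practice by matching its volume, symmetry group, or census name to a fixed filling of the $6^2_3$ exterior. Once that identification is secured, the ``all but finitely many $n$'' statement is a routine application of the hyperbolic Dehn surgery theorem.
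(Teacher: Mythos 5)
Your first paragraph and your final appeal to Thurston's hyperbolic Dehn surgery theorem are sound and match the paper's strategy: with $(k_1,k_2,k_3)=(0,n-3,2)$ one has $m=1$, the $n$--dependence of the branched set sits in a single twist region (an ``$n$--box'' of two parallel strands with $n$ half twists), its $2$--fold branched cover is a solid torus, and hence $M_2(n;0,n-3,2)=W(\gamma_n)$ for a fixed one--cusped manifold $W$ with $\gamma_n\to\infty$. But the heart of the proposition is the identification of $W$ (or its double cover) with the exterior of $6^2_3$, and your proposal does not prove this: you defer it to ``matching its volume, symmetry group, or census name,'' which is exactly the step that has to be exhibited. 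The paper does it concretely and more simply than your $\mathbb{Z}_2\oplus\mathbb{Z}_2$ construction: replacing the $n$--box by the trivial horizontal tangle turns the branched set into a Hopf link, whose $2$--fold branched cover is $RP^3$, so $W$ is the exterior of an explicit knot $\kappa\subset RP^3$; one then passes to the universal cover $S^3\to RP^3$ and draws the iterated branched covers to see that the preimage of $\kappa$ is the link $6^2_3$ (Figures \ref{fig:h1}--\ref{fig:h3}). Without that explicit chain of pictures (or an equivalent computation), the statement that $6^2_3$ appears is an assertion, not a proof.

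There is also a structural error in your covering picture. Since the preimage of $\kappa$ in $S^3$ is the two--component link $6^2_3$, the single cusp of $W$ lifts to both cusps of the $6^2_3$ exterior, the deck involution exchanges them, and the filling $W(\gamma_n)$ lifts to a filling of \emph{both} cusps with slopes that both vary with $n$. Your claim that $\widetilde W$ is ``the $6^2_3$ exterior with one of its two cusps filled along a fixed slope'' and that ``exactly one filling slope varies'' is therefore incorrect; it is a symptom of not having worked out the cover. This does not break the endgame --- one can either apply hyperbolic Dehn surgery to the two--cusped $6^2_3$ exterior with both slopes tending to infinity, or, as the paper does, apply it directly to the one--cusped hyperbolic quotient $W=S^3_{6^2_3}/\tau$ in $RP^3$ (which avoids your descent step ``finitely covered by hyperbolic implies hyperbolic'' altogether) --- but the cover must be described correctly for either route to go through.
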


\begin{figure}[h]
\centerline{\scalebox{0.5}{\includegraphics{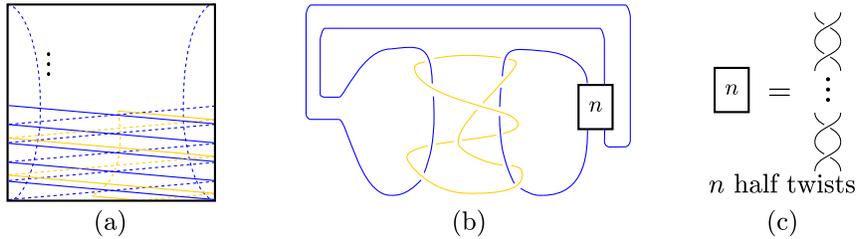}}} \caption{The
quotient $M_2(n;0,n-3,2)/\tau$}\label{fig:h1}
\end{figure}

\begin{proof}
The manifold $M_2(n;0,n-3,2)(n\geq 5)$ is the 2--fold branched cover
of the link as in Figure \ref{fig:h1}(a). As in the proof of
Proposition \ref{Idmfd}, we can isotopy it to Figure
\ref{fig:h1}(b), here the $n$--box denotes two parallel vertical
singular arcs with $n$ half twists as in Figure \ref{fig:h1}(c).

If we replace the $n$--box by a box containing two parallel
horizontal singular arcs, then the picture will be as in Figure
\ref{fig:h2}(a), which is a Hopf link. The new box can be thought as
a regular neighborhood of a regular arc. We can isotopy this picture
to the position as in Figure \ref{fig:h2}(b).

\begin{figure}[h]
\centerline{\scalebox{0.5}{\includegraphics{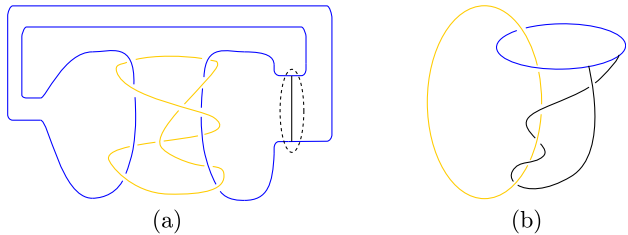}}}
\caption{Surgery on $M_2(n;0,n-3,2)/\tau$}\label{fig:h2}
\end{figure}

Clearly the 2--fold branched covers of the new box and the original
$n$--box are solid tori. Since the 2--fold branched cover of the
Hopf link is $RP^3$, we know that $M_2(n;0,n-3,2)$ is some Dehn
surgery on a knot in $RP^3$. When we consider a further 2--fold
cover, the knot become the link $6^2_3$ in $S^3$.

\begin{figure}[h]
\centerline{\scalebox{0.43}{\includegraphics{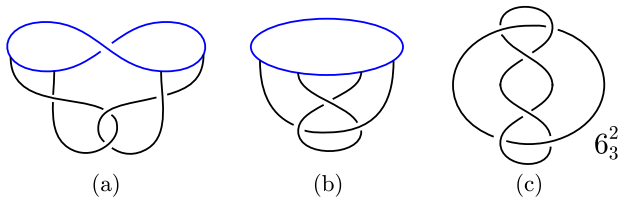}}}
\caption{Branched covers and link $6^2_3$}\label{fig:h3}
\end{figure}

This can be easily seen from another way to get the 4--fold branched
cover as following. Figure \ref{fig:h3}(a) is the 2--fold branched
cover of Figure \ref{fig:h2}(b). Figure \ref{fig:h3}(b) is isotopic
to Figure \ref{fig:h3}(a). And Figure \ref{fig:h3}(c) is the 2--fold
branched cover of Figure \ref{fig:h3}(b).

The link $6^2_3$ is hyperbolic, and one can show that its quotient
knot in $RP^3$ is also hyperbolic. Then by the Thurston's Hyperbolic
Dehn Surgery Theorem, all the surgeries are hyperbolic 3--manifolds,
except for finitely many cases (see \cite{Th}).
\end{proof}

\begin{remark}
Now the orbifold $M_2(n;0,n-3,2)/\tau$ has 1--dimensional singular
set, hence one can also use the Orbifold Theorem to show the results
(see \cite{BMP}).
\end{remark}

\section{Weakly alternating Heegaard diagram}\label{Fexm}
Suppose $M=N_1\cup_{S} N_2$ is a genus two Heegaard splitting. The
disjoint simple closed curves $\alpha_i$, $\beta_i$, $\gamma_i$ in
$S$ bound disks in $N_i$. $\gamma_i$ is a separating curve,
$\alpha_i$ and $\beta_i$ are non-separating and lie in different
sides of $\gamma_i$.

\begin{definition}\label{WAlterD}
We call the diagram
$\{\alpha_1,\beta_1,\gamma_1\}\cup\{\alpha_2,\beta_2,\gamma_2\}$ a
weakly alternating Heegaard diagram if $\gamma_i$ intersects
$\{\alpha_j,\beta_j,\gamma_j\}$ in the cyclic order $$\alpha_j,
\gamma_j, \beta_j, \gamma_j, \alpha_j, \gamma_j, \beta_j, \gamma_j,
\cdots, i\neq j.$$

We call a Heegaard splitting weakly alternating if it admits a
weakly alternating Heegaard diagram.
\end{definition}

\begin{remark}\label{minimal}
Suppose
$\{\alpha_1,\beta_1,\gamma_1\}\cup\{\alpha_2,\beta_2,\gamma_2\}$ is
weakly alternating and $\{\alpha_1,\beta_1,\gamma_1\}$ do not
intersect $\{\alpha_2,\beta_2,\gamma_2\}$ minimally, then there is a
bi-gon in some $\alpha_i\cup\beta_j (i\neq j)$. We can isotopy
$\alpha_i$ or $\beta_j$ to get a new weakly alternating Heegaard
diagram with fewer intersections, and do not affect the
corresponding 3--manifold. Hence following we only consider weakly
alternating Heegaard diagrams with minimal intersections.
\end{remark}

\begin{figure}[h]
\centerline{\scalebox{0.34}{\includegraphics{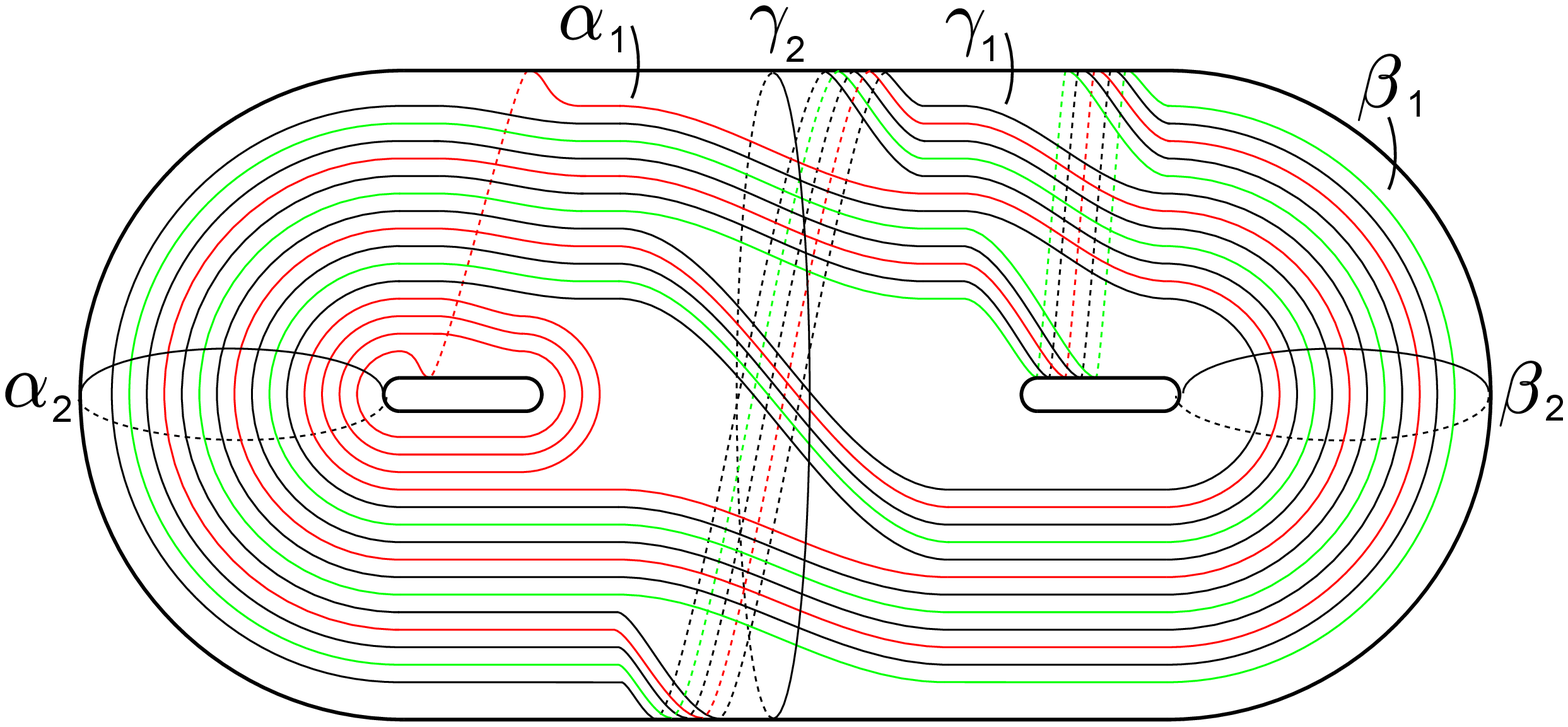}}}
\caption{Weakly alternating Heegaard diagram}\label{fig:w7}
\end{figure}

Clearly an alternating Heegaard diagram is weakly alternating.
Figure \ref{fig:w7} shows a weakly alternating Heegaard diagram
which is not alternating. Latter we will see that this diagram give
us the Poincar\'e's homology 3--sphere $S(-1/2,1/3,1/5)$, which does
not admit any alternating Heegaard splitting. Now we give a proof of
Theorem \ref{wahws}.

\begin{proof}[Proof of Theorem \ref{wahws}]
Similar to the construction part in Section \ref{def}, now we choose
only one point $x_\gamma$ in $\gamma_1\cap\gamma_2$ and add only one
band in $N_i$ connecting $K_i$ and $\gamma_j$, $i\neq j$. Then we
can similarly get $K_i'$, $N(K_i)$, $N(K_i')$ and $f_1$, $f_2$,
$f_3$.

Notice that still we can choose $f_2$ such that the induced maps
$g_1^{-1}$ and $g_2$ satisfy the {\it Expanding condition on $K$},
because in Figure \ref{fig:w1} one can see that the loops have been
drawn longer and after the isotopy the middle arc will also be
longer. Actually we can make a small modification on $K_i'$ as in
Figure \ref{fig:w8}, and correspondingly modify $f_2$ by further
isotopy. Then the expansion on $K$ will be more clear.

\begin{figure}[h]
\centerline{\scalebox{0.45}{\includegraphics{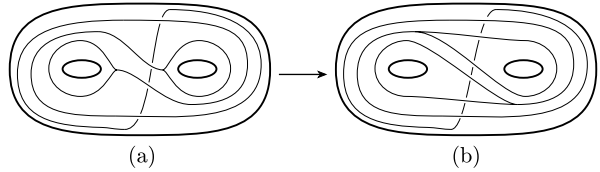}}}
\caption{Expansion of the spine}\label{fig:w8}
\end{figure}

Following the construction {\bf Steps 1, 2, 3} in Section \ref{def},
we can get the required $f$.
\end{proof}

\begin{remark}
By the proof, it is clear that the Williams solenoids derived from
weakly alternating Heegaard splittings are all handcuffs solenoids.
\end{remark}

Suppose
$D(4n;m_1,m_2,m_3)=\{\alpha_1,\beta_1,\gamma_1\}\cup\{\alpha_2,\beta_2,\gamma_2\}$
is an alternating Heegaard diagram. Let $c_i (1\leq i\leq 5)$ be
simple closed curves in $S$ as in Figure \ref{fig:w9}. And let
$t_{c_i}(1\leq i\leq 5)$ denote the Dehn twist along $c_i$ as in
Remark \ref{dehn}.

\begin{figure}[h]
\centerline{\scalebox{0.76}{\includegraphics{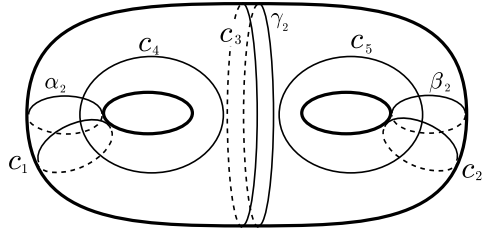}}}
\caption{Simple closed curves in $S$}\label{fig:w9}
\end{figure}

\begin{definition}
Let $l$ and $r$ be two integers. Define $D(4n;m_1[l],m_2[r],m_3)$ to
be the diagram $t_{c_4}^lt_{c_5}^r(\{\alpha_1,\beta_1,\gamma_1\})
\cup \{\alpha_2,\beta_2,\gamma_2\}$, . If $l$ or $r$ is $0$, the
diagram will also be denoted by $D(4n;m_1,m_2[r],m_3)$ or
$D(4n;m_1[l],m_2,m_3)$. $D(4n;m_1[0],m_2[0],m_3)$ is the same as
$D(4n;m_1,m_2,m_3)$, the alternating Heegaard diagram itself.
\end{definition}

\begin{lemma}\label{wsym}
If $D(4n;m_1,m_2,m_3)$ is alternating, then
$D(4n;-m_1,m_2,m_1+m_3)$, $D(4n;m_1',m_2',m_3')$,
$D(4n;m_2,m_1,m_3)$ and $D(4n;-m_1,-m_2,-m_3)$ are alternating, here
$m_i'\equiv m_i (mod\,4n)$. And we have following homeomorphisms:

1. $D(4n;m_1[l],m_2[r],m_3)\simeq D(4n;m_1[l],m_2[r],m_3')$,
$m_3\equiv m_3' (mod\,4n)$.

2. $D(4n;m_1[l],m_2[0],m_3)\simeq D(4n;m_1[l],m_2'[0],m_3)$,
$m_2\equiv m_2' (mod\,4n)$.

3. $D(4n;m_1[l],m_2[r],m_3)\simeq D(4n;m_2[r],m_1[l],m_3)$.

4. $D(4n;m_1[l],m_2[r],m_3)\simeq D(4n;-m_1[-l],-m_2[-r],-m_3)$.

\noindent If further $0<m_1<4n$, we have the following
homeomorphism:

5. $D(4n;m_1[l],m_2[r],m_3)\simeq D(4n;-m_1[l+2],m_2[r],m_1+m_3)$.
\end{lemma}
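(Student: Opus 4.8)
The plan is to verify each of the assertions by exhibiting an explicit homeomorphism of the splitting surface $S$ (a product of Dehn twists, reflections, and $\pi$--rotations) that carries one diagram to the other, exactly in the spirit of Lemma \ref{sym} and Lemma \ref{mirror}. For the statements that an auxiliary diagram is alternating --- namely that $D(4n;-m_1,m_2,m_1+m_3)$, $D(4n;m_i',\dots)$, $D(4n;m_2,m_1,m_3)$, $D(4n;-m_1,-m_2,-m_3)$ are alternating whenever $D(4n;m_1,m_2,m_3)$ is --- the point is that each arises from $D(4n;m_1,m_2,m_3)$ by applying a homeomorphism of $S$ of the type described in Lemma \ref{mirror} (for the middle three this is immediate from Lemma \ref{sym}), and by Lemma \ref{mirror} the alternating property is preserved. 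For $D(4n;-m_1,m_2,m_1+m_3)$ the relevant homeomorphism is the half twist $t_{c_1}$ (or a suitable power), supported near the curve $c_1$ in Figure \ref{fig:w9}, which swaps the roles of $\alpha_1$ with itself with a sign change and shifts the $\gamma$--coordinate by $m_1$; one checks this on the trivial diagram $D(4n;0,0,0)$ and then notes that it commutes appropriately with the twist operations $\mathcal{T}_{c_i}$ defining $D(4n;m_1,m_2,m_3)$.

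For the five numbered homeomorphisms, I would proceed coordinate by coordinate. Item 1 and item 2 are the analogue of Lemma \ref{sym}(1): a Dehn twist $t_{\gamma_2}$ (resp. the appropriate twist along the curve carrying the $m_2$--coordinate when $r=0$) changes $m_3$ (resp. $m_2$) by a multiple of $4n$ while fixing everything else, including the $t_{c_4}^l t_{c_5}^r$ part, since these twists can be isotoped to have disjoint support from $c_4$ and $c_5$. Item 3 is the $\pi$--rotation of Figure \ref{fig:7} that interchanges $\alpha_1\leftrightarrow\beta_1$; one must check it conjugates $t_{c_4}$ to $t_{c_5}$ and vice versa, which is clear from the symmetric placement of $c_4,c_5$ in Figure \ref{fig:w9}. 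Item 4 is the reflection of Figure \ref{fig:7}, which reverses all twist directions simultaneously, hence negates $l$, $r$, and all $m_i$. Item 5, valid when $0<m_1<4n$, is the subtle one: it should come from composing $t_{c_4}^2$ (the twist that changes the $m_1$--coordinate by a bounded amount and introduces the $l\mapsto l+2$ shift) with the half twist along $c_1$ used above to send $m_1\mapsto -m_1$ and $m_3\mapsto m_1+m_3$; the hypothesis $0<m_1<4n$ ensures the intermediate diagrams stay in the intended normal form and that the bookkeeping of the bracketed parameter is consistent.

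The main obstacle I anticipate is item 5, and more generally keeping track of how the bracketed twists $t_{c_4}^l, t_{c_5}^r$ interact with the operations that permute or negate the ordinary parameters: one must check that the homeomorphisms realizing items 3, 4, 5 conjugate $t_{c_4}$ and $t_{c_5}$ to the asserted powers of $t_{c_4}$, $t_{c_5}$ and not to twists along some other curves, and that $t_{c_4}, t_{c_5}$ commute with the Dehn twists used in items 1 and 2. This is a finite check on the configuration of curves $c_1,\dots,c_5,\alpha_i,\beta_i,\gamma_i$ in Figure \ref{fig:w9}, but it requires care with orientations and the distinction between the diagram operation $\mathcal{T}_c$ and the Dehn twist $t_c$ emphasized in Remark \ref{dehn}. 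Everything else reduces to the already-established Lemma \ref{sym} and Lemma \ref{mirror} applied in the presence of the extra twisting, together with the observation that a homeomorphism of $S$ sends the diagram of a $3$--manifold to a diagram of the same $3$--manifold.
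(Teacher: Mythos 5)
Your treatment of homeomorphisms 1--4 follows essentially the paper's route (the proof of Lemma \ref{sym} adapted to the presence of $t_{c_4}^{l}t_{c_5}^{r}$, together with the compatibility checks you list), and using Lemma \ref{sym} plus Lemma \ref{mirror} to get the alternating property of $D(4n;m_1',m_2',m_3')$, $D(4n;m_2,m_1,m_3)$ and $D(4n;-m_1,-m_2,-m_3)$ is a reasonable substitute for the paper's direct verification. But the two places where the lemma has real content are exactly the two places your argument is not substantiated: the claim that $D(4n;-m_1,m_2,m_1+m_3)$ is alternating, and homeomorphism 5.

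For the first, you invoke a ``half twist $t_{c_1}$'' that allegedly sends $m_1\mapsto -m_1$ and $m_3\mapsto m_1+m_3$. No such move is established anywhere, and it is implausible as stated: $c_1$ is the curve (parallel to $\alpha_2$) whose twists preserve $\{\alpha_2,\beta_2,\gamma_2\}$ in Theorem \ref{all wad}, and a (half) Dehn twist along it acts on the parameters as a shift of $m_1$ by a fixed amount, as in Lemma \ref{sym}(1) --- not as a sign change coupled with a transfer into the $m_3$--slot. Moreover $D(4n;m_1,m_2,m_3)$ and $D(4n;-m_1,m_2,m_1+m_3)$ need not be homeomorphic diagrams at all (by item 5 they differ by $t_{c_4}^{\pm2}$ applied to half the curves), so no surface homeomorphism can do this job. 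The paper instead verifies directly that $(-m_1,m_2,m_1+m_3)$ again has the form $\eta+4(k_1,k_2,k_3)$ with $(n,k_1+k_2+2k_3)=1$, by arithmetic against the classification of Theorem \ref{all ad} (for instance the class of $(1,-3,1)$ is sent to the class of $-(1,-5,2)$, and the combination $k_1+k_2+2k_3$ is unchanged). For homeomorphism 5 you propose composing $t_{c_4}^{2}$ with this same unestablished half twist, so the gap propagates. The actual content of item 5 is the single identity $D(4n;m_1[-2],m_2,m_3)\simeq D(4n;-m_1,m_2,m_1+m_3)$: applying $t_{c_4}^{-2}$ to the colored curves of an alternating diagram yields, after an explicit isotopy (Figure \ref{fig:w10}), the colored curves of another alternating diagram with the stated parameters, and this is where the simultaneous changes $m_1\mapsto-m_1$, $m_3\mapsto m_1+m_3$, $l\mapsto l+2$ come from and where the hypothesis $0<m_1<4n$ is used (the isotopy is drawn with $m_1$ parallel arcs in a definite position). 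You correctly flag item 5 as the subtle point, but the isotopy that resolves it --- the one genuinely new step beyond Lemma \ref{sym} --- is missing from your argument.
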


\begin{proof}
By Theorem \ref{all ad}, one can check directly that the four
diagrams are all alternating Heegaard diagrams. The first four
homeomorphisms can be proved similarly to the proof of Lemma
\ref{sym}. For the last homeomorphism we only need to prove the
following:
$$D(4n;m_1[-2],m_2,m_3)\simeq D(4n;-m_1,m_2,m_1+m_3).$$
This can be shown as in Figure \ref{fig:w10}. Here we only give the
left part of the surface. The notation $x$(or $y$) means that there
are $x$(or $y$) parallel arcs and here $x=m_1$.

\begin{figure}[h]
\centerline{\scalebox{0.7}{\includegraphics{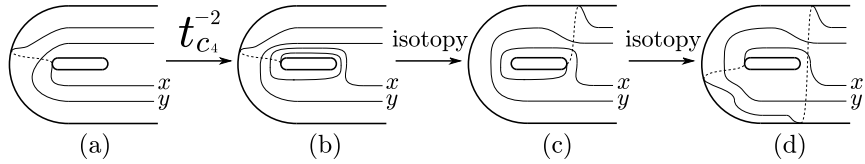}}} \caption{Dehn
twist and isotopy}\label{fig:w10}
\end{figure}

Figure \ref{fig:w10}(a) shows the left part of the diagram
$D(4n;m_1,m_2,m_3)$. After applying the Dehn twist $t_{c_4}^{-2}$ we
get Figure \ref{fig:w10}(b), the left part of
$D(4n;m_1[-2],m_2,m_3)$. This is isotopic to
$D(4n;-m_1,m_2,m_1+m_3)$, via two isotopies as in Figure
\ref{fig:w10}(c) and Figure \ref{fig:w10}(d).
\end{proof}

\begin{lemma}\label{wmirror}
The Dehn twist(half twist), $\pi$--rotation and reflection as in
Figure \ref{fig:7} map a weakly alternating Heegaard diagram to a
weakly alternating Heegaard diagram.
\end{lemma}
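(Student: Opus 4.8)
The plan is to exploit the fact that, by Definition~\ref{WAlterD}, ``weakly alternating'' is a condition imposed \emph{only} on how each separating curve $\gamma_i$ meets the opposite triple $\{\alpha_j,\beta_j,\gamma_j\}$, together with the purely topological facts that $\gamma_i$ is separating while $\alpha_i,\beta_i$ are non-separating and lie on opposite sides of $\gamma_i$. Every one of these ingredients is invariant under an arbitrary self-homeomorphism of $S$, so in truth the image of any weakly alternating Heegaard diagram under \emph{any} homeomorphism $h\co S\to S$ is again weakly alternating; the three maps of Figure~\ref{fig:7} are singled out here only because they also act in a controlled way on the standard models $D(4n;m_1,m_2,m_3)$, which is what makes them the useful ones for the subsequent classification, in complete parallel with Lemmas~\ref{sym} and~\ref{mirror}.

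More concretely, let $\mathcal D=\{\alpha_1,\beta_1,\gamma_1\}\cup\{\alpha_2,\beta_2,\gamma_2\}$ be a weakly alternating Heegaard diagram on $S$ and let $h\co S\to S$ be the half twist (Dehn twist), the $\pi$--rotation, or the reflection of Figure~\ref{fig:7}. First I would set $\alpha_i'=h(\alpha_i)$, $\beta_i'=h(\beta_i)$, $\gamma_i'=h(\gamma_i)$ and note these are again disjoint simple closed curves; attaching $2$--handles to $S\times[0,1]$ along $\alpha_i',\beta_i',\gamma_i'$ and capping the resulting $2$--sphere boundary components with balls produces a genus two handlebody $N_i'$ in which these curves bound meridian disks, so $h(\mathcal D)$ is the Heegaard diagram of a genus two splitting $N_1'\cup_S N_2'$ of a closed orientable $3$--manifold. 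Here I use that a homeomorphism carries separating curves to separating curves, non-separating curves to non-separating curves, and preserves the relation ``lies on a prescribed side of $\gamma_i$'', so the separation and side hypotheses of Definition~\ref{WAlterD} transfer to $h(\mathcal D)$ verbatim.

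It then remains to check the cyclic-order condition, which is the only step needing a genuine argument, and the main (though still very mild) obstacle. Since $h$ is a homeomorphism of $S$, it restricts to a homeomorphism of the circle $\gamma_i$ onto $\gamma_i'$ carrying $\gamma_i\cap(\alpha_j\cup\beta_j\cup\gamma_j)$ bijectively onto $\gamma_i'\cap(\alpha_j'\cup\beta_j'\cup\gamma_j')$ and preserving the cyclic order of these points up to a global reversal, the reversal occurring exactly when $h$ reverses the orientation of $S$ (as for the reflection). The point to be careful about is precisely this possible reversal: the prescribed period-four block $(\alpha_j,\gamma_j,\beta_j,\gamma_j)$, read backwards, is $(\gamma_j,\beta_j,\gamma_j,\alpha_j)$, which is a cyclic rotation of $(\alpha_j,\gamma_j,\beta_j,\gamma_j)$ itself, so the block is palindromic and reversing the cyclic order leaves the pattern unchanged. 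Hence $\gamma_i'$ meets $\{\alpha_j',\beta_j',\gamma_j'\}$ in the cyclic order $\alpha_j',\gamma_j',\beta_j',\gamma_j',\dots$ whether or not $h$ preserves orientation, so $h(\mathcal D)$ is weakly alternating; and if one also wishes to stay within the minimal-intersection diagrams of Remark~\ref{minimal}, this is free since $h$ preserves geometric intersection numbers. Everything but the palindrome observation is a direct transcription of the homeomorphism-invariance of the data in Definition~\ref{WAlterD}, exactly as in the proof of Lemma~\ref{mirror}.
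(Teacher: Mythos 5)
Your proof is correct, and it supplies exactly the routine argument the paper omits (the paper states this lemma without proof, just as it does for Lemma~\ref{mirror}): the defining data of Definition~\ref{WAlterD} --- separating/non-separating type, sides of $\gamma_i$, and the cyclic intersection pattern of $\gamma_i$ with $\{\alpha_j,\beta_j,\gamma_j\}$ --- are all carried along by a surface homeomorphism. The one point that genuinely needs checking, namely that an orientation-reversing map (the reflection) reverses the cyclic order but the period-four word $\alpha_j,\gamma_j,\beta_j,\gamma_j$ equals its own reversal up to cyclic rotation, is handled correctly in your argument.
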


\begin{theorem}\label{all wad}
A diagram is a weakly alternating Heegaard diagram if and only if it
has the form $t_{c_1}^{m_4}t_{c_2}^{m_5}(D(4n;m_1[l],m_2[r],m_3))$,
here $n>0$, $m_i(1\leq i\leq 5)$, $l$, $r$ are all integers and
satisfy $(m_1^2-1)l=(m_2^2-1)r=0$.
\end{theorem}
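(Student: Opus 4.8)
The plan is to reduce the classification of weakly alternating Heegaard diagrams to the already-classified alternating case (Theorem \ref{all ad}) by analyzing what flexibility the ``weakly alternating'' hypothesis adds. Recall that in the weakly alternating definition only $\gamma_1$ (resp. $\gamma_2$) is required to meet $\{\alpha_2,\beta_2,\gamma_2\}$ (resp. $\{\alpha_1,\beta_1,\gamma_1\}$) in the alternating cyclic pattern; the curves $\alpha_i,\beta_i$ are unconstrained relative to $\alpha_j,\beta_j$. First I would cut $S$ along $\gamma_1\cup\gamma_2$ into its pieces and observe, exactly as in the proof of Theorem \ref{all ad}, that the pattern of $\gamma_1\cap\gamma_2$ forces the same ``$D(4n;\,\cdot,\cdot,m_3)$-type'' skeleton on the $\gamma$-curves: the sub-diagram formed by $\gamma_1,\gamma_2$ together with how the $\alpha_i,\beta_i$ enter each region is governed by an integer $n>0$ and a twisting parameter $m_3$. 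In other words, the $\gamma$-data is rigid and reproduces the alternating combinatorics.

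Next I would localize the extra freedom. Since $\alpha_1$ and $\beta_1$ lie on opposite sides of $\gamma_1$, and each must still bound a disk in $N_1$ with $\gamma_1$ separating, the only way $D$ can fail to be alternating is that $\alpha_1$ (and/or $\beta_1$) is obtained from its alternating model position by a homeomorphism of the side of $\gamma_1$ containing it that is \emph{supported away from} $\gamma_1$ — i.e.\ a power of a Dehn twist along a curve isotopic to $c_4$ (for $\alpha_1$) or $c_5$ (for $\beta_1$), since these are the curves cutting off the relevant one-holed tori, together with possibly a twist along $c_1$ or $c_2$ which are parallel push-offs of $\alpha_1,\beta_1$ and do not change the diagram's homeomorphism type beyond relabeling. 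This is precisely why the asserted normal form is $t_{c_1}^{m_4}t_{c_2}^{m_5}\bigl(D(4n;m_1[l],m_2[r],m_3)\bigr)$: $l,r$ record the genuinely new twisting of $\alpha_1,\beta_1$, while $m_1,m_2,m_3$ and the $c_1,c_2$-twists record the old alternating data.

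The key constraint $(m_1^2-1)l=(m_2^2-1)r=0$ I would extract as follows. The parameter $l$ is only meaningful when the Dehn twist $t_{c_4}$ actually changes the diagram up to homeomorphism; if $l\neq 0$ then the configuration near the $\alpha_1$-side must already be simple enough that the alternating pattern of $\gamma$-curves with the $\alpha$-arcs is unaffected, and tracking how many $\alpha_1$-arcs cross $c_4$ one finds this forces $m_1=\pm 1$ (the single-arc case), i.e.\ $m_1^2-1=0$; symmetrically $m_2^2-1=0$ whenever $r\neq 0$. Concretely, one computes the intersection number of $c_4$ with the $\alpha_1$-arcs in $D(4n;m_1[0],\dots)$ and checks that for the twisted diagram to remain a valid Heegaard diagram with $\gamma_1$ separating and with the weakly-alternating pattern, a bigon-free count forces $m_1\in\{\pm1\}$; the converse (that $m_1=\pm1$ together with $(n,m_1+m_2+2\lfloor\cdot\rfloor)=1$-type conditions inherited from Theorem \ref{all ad} makes every such twisted diagram weakly alternating) is checked by exhibiting the cyclic order of $\gamma_1\cap\{\alpha_2,\beta_2,\gamma_2\}$ directly, using Lemma \ref{wmirror} to normalize. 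For the ``if'' direction I would simply verify that starting from an alternating $D(4n;m_1,m_2,m_3)$ and applying $t_{c_4}^l,t_{c_5}^r$ (with $l=0$ unless $m_1=\pm1$, $r=0$ unless $m_2=\pm1$) and then $t_{c_1}^{m_4}t_{c_2}^{m_5}$ preserves the weakly alternating property, which follows because none of these twists is supported near $\gamma_1$ or $\gamma_2$ in a way that disturbs the required cyclic order — Lemma \ref{wsym} already records the relevant homeomorphisms.

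The main obstacle will be the bookkeeping in the previous paragraph: showing rigorously that \emph{every} weakly alternating diagram, after isotopy to minimal position (Remark \ref{minimal}) and after normalizing by the symmetries of Lemma \ref{wsym} and \ref{wmirror}, is carried to one of the listed normal forms — i.e.\ that the twists along $c_1,\dots,c_5$ exhaust all the freedom and that no ``hidden'' weakly alternating diagram escapes the list. I expect to handle this by the same cut-along-$\gamma$ and count-the-arcs analysis used in Theorem \ref{all ad}, organized as a finite case check on the piece containing the saddle, with the new feature that the $\alpha$- and $\beta$-arcs in that piece may now wind arbitrarily, contributing exactly the parameters $l$ and $r$.
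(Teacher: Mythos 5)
Your overall strategy coincides with the paper's: reduce to the classification of Theorem \ref{all ad} by showing that all the extra freedom is concentrated in twists along $c_1,c_2,c_4,c_5$, with the saddle pieces carrying the parameters $l$ and $r$; your closing paragraph correctly locates where the real work lies. However, the central step of the ``only if'' direction is supported by an argument that does not work. You assert that $\alpha_1$ can differ from its alternating model position only by ``a homeomorphism of the side of $\gamma_1$ containing it that is supported away from $\gamma_1$ --- i.e.\ a power of a Dehn twist along a curve isotopic to $c_4$.'' The mapping class group of a one-holed torus is not cyclic, so being supported away from $\gamma_1$ gives no such conclusion; this inference essentially assumes the theorem. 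What actually rigidifies $\alpha_1$ is the other half of the weakly alternating condition --- $\gamma_2$ must still meet $\alpha_1,\gamma_1,\beta_1,\gamma_1$ in the prescribed cyclic order --- together with minimal position (Remark \ref{minimal}). The paper extracts this by cutting along $\{\alpha_2,\beta_2,\gamma_2\}$ (your proposed cut along $\gamma_1\cup\gamma_2$ does not see the condition on $\gamma_1\cap\alpha_2$ and $\gamma_1\cap\beta_2$ at all) and then along the Black arcs: the Black arc pattern is forced, minimality forces each rectangle piece to carry exactly one colored arc, so all remaining freedom lives in the two saddle pieces (whose colored arcs must be parallel and monochromatic) and in the three pasting maps, and these are exactly exhausted by $n$, $m_1,\dots,m_5$, $l$, $r$.

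The derivation of $(m_1^2-1)l=(m_2^2-1)r=0$ is the second gap. Your proposed mechanism --- counting intersections of $c_4$ with the $\alpha_1$-arcs and invoking a ``single-arc case'' --- is neither carried out nor the one that works; in the paper the constraint arises because when a saddle piece contains a number of colored arcs different from $2$, the pasting at $\alpha_2$ (resp.\ $\beta_2$) is unique modulo the twist $t_{c_1}$ (resp.\ $t_{c_2}$), and undoing $t_{c_4}^{-l}$ to reach an alternating diagram then forces $m_1=\pm1$. Be aware also that the presentation $D(4n;m_1[l],m_2[r],m_3)$ is far from unique: by Lemma \ref{wsym}(5) even powers of $t_{c_4}$ are absorbed into the alternating data, e.g.\ $D(4n;-m_1[2],m_2,m_1+m_3)\simeq D(4n;m_1,m_2,m_3)$ is (weakly) alternating for any admissible $m_1$. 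So the implication ``$l\neq 0$ forces $m_1=\pm1$'' is false for an arbitrary presentation and can only be a statement about the particular normal form your cut-and-paste analysis produces; your sketch does not make this distinction. The ``if'' direction as you describe it (direct verification of the cyclic orders after twisting, normalized via Lemmas \ref{wsym} and \ref{wmirror}) does match the paper.
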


\begin{proof}
{\bf The Only If Part:}

Suppose
$\{\alpha_1,\beta_1,\gamma_1\}\cup\{\alpha_2,\beta_2,\gamma_2\}$ is
a weakly alternating Heegaard diagram on a splitting surface $S$. We
can assume $\{\alpha_2,\beta_2,\gamma_2\}$ to be standard as before
and the curves $\{\alpha_1,\beta_1,\gamma_1\}$ have colors Red,
Green and Black.

Cutting $S$ along $\{\alpha_2,\beta_2,\gamma_2\}$, we get $S_l$ and
$S_r$. Since $\gamma_1$ intersects $\{\alpha_2,\beta_2,\gamma_2\}$
in the cyclic order $\alpha_2$, $\gamma_2$, $\beta_2$,
$\gamma_2,\cdots$, the Black curve must be cut into arcs lying in
$S_l$ and $S_r$ as in Figure \ref{fig:w11}.

\begin{figure}[h]
\centerline{\scalebox{0.63}{\includegraphics{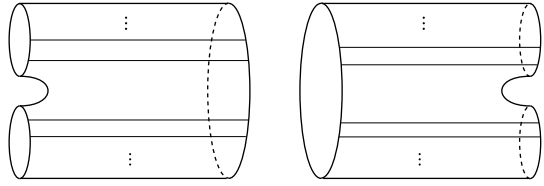}}}
\caption{Black curves in $S_l$ and $S_r$}\label{fig:w11}
\end{figure}

Since $\gamma_2$ intersects $\{\alpha_1,\beta_1,\gamma_1\}$ in the
cyclic order $\alpha_1$, $\gamma_1$, $\beta_1$, $\gamma_1,\cdots$,
the number of intersection points with color Red(Green) must be
even. Cutting $S_l$ and $S_r$ along the Black arcs, since
intersections of $\{\alpha_1,\beta_1,\gamma_1\}$ and
$\{\alpha_2,\beta_2,\gamma_2\}$ are minimal (see Remark
\ref{minimal}), each rectangle piece can contain only one Red(Green)
arc.

\begin{figure}[h]
\centerline{\scalebox{0.63}{\includegraphics{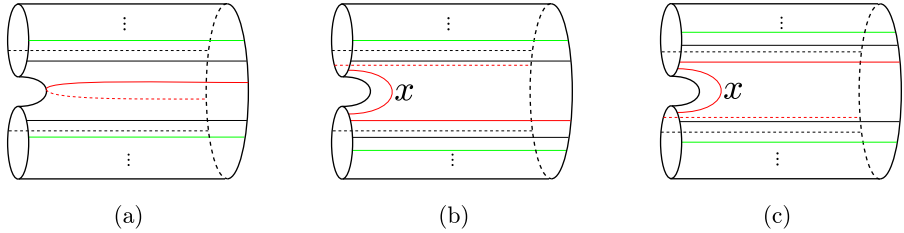}}}
\caption{Three possibilities of colored arcs in
$S_l$}\label{fig:w12}
\end{figure}

If the number of colored arcs in the saddle piece is not $2$, then
modular the Dehn twist along $\alpha_2(\beta_2)$ the pasting way at
$\alpha_2(\beta_2)$ is unique. And in any case all arcs in the
saddle piece will have the same color. We recolor the Red(Green)
curves if it is needed, then $S_l$ should be as in Figure
\ref{fig:w12}. Here the notation $x$ means there are $x$ parallel
arcs. The situation of $S_r$ will be similar.

The original diagram can be obtained from $S_l$ and $S_r$ by pasting
the cuts. Hence we can choose suitable $m_4$, $m_5$, $l$ and $r$,
such that
$t_{c_5}^{-r}t_{c_4}^{-l}t_{c_2}^{-m_5}t_{c_1}^{-m_4}(\{\alpha_1,\beta_1,\gamma_1\})
\cup\{\alpha_2,\beta_2,\gamma_2\}$ is an alternating Heegaard
diagram $D(4n;m_1,m_2,m_3)$. When the number of colored arcs in the
saddle piece of $S_l$ or $S_r$ is not $2$, $l$ or $r$ is not $0$ and
correspondingly $m_1$ or $m_2$ must be $\pm1$. Since
$t_{c_1}^{m_4}t_{c_2}^{m_5}$ preserves the curves
$\{\alpha_2,\beta_2,\gamma_2\}$, we know that
$\{\alpha_1,\beta_1,\gamma_1\}\cup\{\alpha_2,\beta_2,\gamma_2\}$ has
the form as in the Theorem.

{\bf The If Part:}

By the definition of $D(4n;m_1[l],m_2[r],m_3)$,
$D(4n;m_1[0],m_2[0],m_3)$ is always an alternating Heegaard diagram.
Let it be
$\{\alpha_1,\beta_1,\gamma_1\}\cup\{\alpha_2,\beta_2,\gamma_2\}$. If
$m_1$ or $m_2$ is $\pm 1$, then we can apply Dehn twist $t_{c_4}^l$
or $t_{c_5}^r$ on $\{\alpha_1,\beta_1,\gamma_1\}$ to get weakly
alternating Heegraard diagrams. Hence by Lemma \ref{wmirror}, the
diagrams given in the Theorem are all weakly alternating Heegaard
diagrams.
\end{proof}

\section{Manifolds with weakly alternating Heegaard
splittings}\label{WDM}
\begin{definition}
Define $M_i(n;k_1[l],k_2[r],k_3)$ to be the 3--manifold which has a
Heegaard diagram $D(4n;m_1[l],m_2[r],m_3)$ with
$(m_1,m_2,m_3)=\eta_i+4(k_1,k_2,k_3)$, $i=1,2$. Here $\eta_i$ is as
in Definition \ref{admani}. $n>0$, $k_i$, $l$ and $r$ are integers
and $(n,k_1+k_2+2k_3)=1$.
\end{definition}

\begin{lemma}\label{wpare}
If a 3--manifold $M$ admits a weakly alternating Heegaard splitting
but does not admit an alternating Heegraard splitting, then $M$ must
be homeomorphic to one of the following:

1. $M_1(n;0[l],k_2,k_3)$, here $0\leq k_3<n$, $n\leq k_2+2k_3<2n$.

2. $M_1(n;0[l],1[r],k_3)$, here $0\leq k_3<n$.
\end{lemma}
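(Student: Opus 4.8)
The plan is to read off the possible shapes of $M$ from the classification in Theorem \ref{all wad} and then normalize the parameters using the symmetry relations of Lemma \ref{wsym}, so that the whole argument is essentially bookkeeping. By Theorem \ref{all wad}, a weakly alternating Heegaard diagram of $M$ has the form $t_{c_1}^{m_4}t_{c_2}^{m_5}\bigl(D(4n;m_1[l],m_2[r],m_3)\bigr)$ with $(m_1^2-1)l=(m_2^2-1)r=0$; and since the notation $D(4n;m_1[l],m_2[r],m_3)$ is only defined when $D(4n;m_1,m_2,m_3)$ is an alternating Heegaard diagram, we have $(m_1,m_2,m_3)=\eta_i+4(k_1,k_2,k_3)$ with $(n,k_1+k_2+2k_3)=1$, $i\in\{1,2\}$. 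The first thing I would record is that the twists $t_{c_1}^{m_4}t_{c_2}^{m_5}$ do not change the homeomorphism type of $M$: the curves $c_1,c_2$ of Figure \ref{fig:w9} bound disks in $N_2$ (which is also why this composite fixes $\{\alpha_2,\beta_2,\gamma_2\}$ up to isotopy), so it extends over $N_2$; replacing the meridian system $\{\alpha_1,\beta_1,\gamma_1\}$ of $N_1$ by its image thus alters the Heegaard gluing only by a self-homeomorphism of $N_2$, which leaves $M$ unchanged. Hence $M\simeq M_i(n;k_1[l],k_2[r],k_3)$.

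Next I would feed in the hypothesis. If $l=r=0$, then $D(4n;m_1[0],m_2[0],m_3)=D(4n;m_1,m_2,m_3)$ is alternating, contradicting the assumption; so $l\neq 0$ or $r\neq 0$. Using the slot-exchange $D(4n;m_1[l],m_2[r],m_3)\simeq D(4n;m_2[r],m_1[l],m_3)$ of Lemma \ref{wsym}(3), I may arrange that the \emph{first} slot carries a nonzero twist; then $(m_1^2-1)l=0$ forces $m_1=\pm1$, and applying the total sign reversal $D(4n;m_1[l],m_2[r],m_3)\simeq D(4n;-m_1[-l],-m_2[-r],-m_3)$ of Lemma \ref{wsym}(4) when $m_1=-1$, I may assume $m_1=1$; since the first entry of $\eta_1$ and of $\eta_2$ is $1$, this gives $k_1=0$. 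If moreover the second slot carries a nonzero twist, the same reasoning gives $m_2=\pm1$; as the middle entry of each $\eta_i$ is odd, exactly one sign is realizable in integers, and in either case $k_2=1$. Thus at this point $M\simeq M_i(n;0[l],k_2,k_3)$ with $r=0$, or $M\simeq M_i(n;0[l],1[r],k_3)$.

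It remains to force $i=1$ and to normalize. Because $0<m_1=1<4n$, Lemma \ref{wsym}(5) applies and, followed again by Lemma \ref{wsym}(4), produces a homeomorphic diagram whose first slot is again $1$; tracking the three entries modulo $4$, one checks that this composite carries a diagram of type $\eta_2$ (i.e.\ with $(m_1,m_2,m_3)=\eta_2+4(k_1,k_2,k_3)$) to one of type $\eta_1$, preserving $k_1=0$ and preserving $k_2=1$ when we started there, while changing $l$ by $\pm2$, $r$ by a sign, and $k_2,k_3$ accordingly. After this $M\simeq M_1(n;0[l'],k_2',k_3')$ of one of the two shapes. Finally, Lemma \ref{wsym}(1) reduces $k_3'$ modulo $n$ to $0\le k_3'<n$ without touching $l',r'$; and when $r'=0$, Lemma \ref{wsym}(2) lets me add multiples of $n$ to $k_2'$, which moves $k_2'+2k_3'$ through a full residue class modulo $n$, until $n\le k_2'+2k_3'<2n$ (here $(n,k_2'+2k_3')=1$ is preserved and is compatible with this range). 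This lands $M$ in form 1 when $r'=0$ and in form 2 (with $k_2'=1$) when $r'\neq 0$, as claimed.

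I expect the main obstacle to be organizational rather than a hard computation: one must verify that the $\eta_2\to\eta_1$ conversion in the previous paragraph is legitimate — its hypothesis $0<m_1<4n$ is precisely why we first arranged $m_1=1$ — and that every branch of the case split ($r'=0$ versus $r'\neq 0$, and the possibility that the conversion sends $l'$ to $0$) still lands inside the two listed families; the only potentially degenerate branch is a reduced diagram with $l'=r'=0$, i.e.\ an alternating diagram, which the hypothesis excludes and so cannot occur. The one genuinely non-combinatorial input is the claim in the first paragraph that $t_{c_1}^{m_4}t_{c_2}^{m_5}$ is cosmetic, for which I would simply exhibit $c_1$ and $c_2$ as disk-bounding curves of $N_2$.
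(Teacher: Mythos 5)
Your proposal is correct and follows essentially the same route as the paper: invoke Theorem \ref{all wad}, discard the $t_{c_1}^{m_4}t_{c_2}^{m_5}$ factor because those twists extend over $N_2$, force $m_1=\pm1$ (and $m_2=\pm1$ when both slots are twisted), and then use the composite of Lemma \ref{wsym}(5) and (4) to convert the $\eta_2$--type diagrams into $\eta_1$--type before normalizing the ranges of $k_2,k_3$ with Lemma \ref{wsym}(1)--(2); this is exactly the paper's displayed computation $D(4n;1[l],-3+4k_2,1+4k_3)\simeq D(4n;1[-l-2],-5+4(2-k_2),2+4(-1-k_3))$ read in the opposite direction. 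Your added justification that $c_1,c_2$ bound disks in $N_2$ (which the paper leaves implicit in the phrase ``modular the Dehn twist'') is a welcome clarification but not a different method.
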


\begin{proof}
By Theorem \ref{all wad} and modular the Dehn twist
$t_{c_1}^{m_4}t_{c_2}^{m_5}$, we only need to consider following
three classes of diagrams: $D(4n;\pm1[l],m_2,m_3)$,
$D(4n;m_1,\pm1[r],m_3)$ and $D(4n;\pm1[l],\pm1[r],m_3)$.

Firstly we consider the first two classes. By Lemma \ref{wsym}, the
3--manifold which can be given by diagrams in these two classes can
also be given by a diagram like $D(4n;1[l],m_2,m_3)$. Then there are
two subclasses: $D(4n;1[l],-3+4k_2,1+4k_3)$ and
$D(4n;1[l],-5+4k_2,2+4k_3)$. But by Lemma \ref{wsym} we have:
\begin{eqnarray*}
&&D(4n;1[l],-3+4k_2,1+4k_3)\\
&\simeq & D(4n;-1[l+2],-3+4k_2,2+4k_3)\\
&\simeq & D(4n;1[-l-2],-5+4(2-k_2),2+4(-1-k_3)).
\end{eqnarray*}
Hence we only need to consider the diagrams
$D(4n;1[l],-3+4k_2,1+4k_3)$ with $(n,k_2+2k_3)=1$, which give us
$M_1(n;0[l],k_2,k_3)$. By Lemma \ref{wsym} again, we can require
$0\leq k_3<n$ and $n\leq k_2+2k_3<2n$.

Similarly for the third class we only need to consider
$D(4n;1[l],1[r],1+4k_3)$ with $(n,1+2k_3)=1$. These diagrams give us
the manifolds $M_1(n;0[l],1[r],k_3)$, and we can require $0\leq
k_3<n$.
\end{proof}

\begin{proposition}\label{wbranch}
The $M_1(n;0[l],k_2,k_3)(M_1(n;0[l],1[r],k_3))$ as in Lemma
\ref{wpare} is a 2--fold branched cover of $S^3$. The branched set
is a three bridge link as in Figure \ref{fig:w13}(a)(Figure
\ref{fig:w13}(b)).

\begin{figure}[h]
\centerline{\scalebox{0.58}{\includegraphics{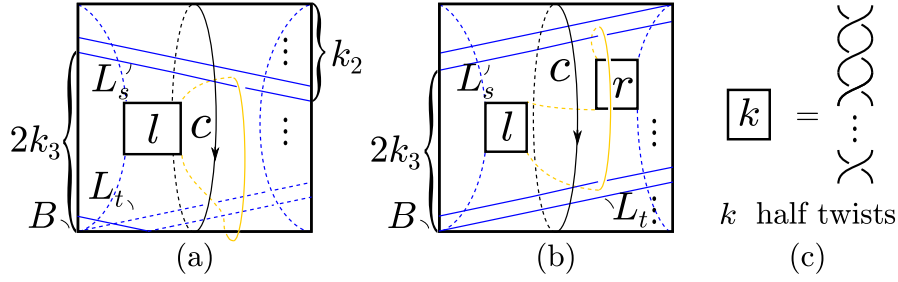}}}
\caption{Branched set}\label{fig:w13}
\end{figure}

The front(back) Blue arcs lying in the surface of the $n\times n$
square pillow have slope $-m/n(m/n)$, $m$ is $k_2+2k_3-n$ in Figure
\ref{fig:w13}(a) and is $1+2k_3-n$ in Figure \ref{fig:w13}(b).
Walking from the point $B$ to right we get the arc $L_t$. Walking
against the oriented circle $c$ from $L_t$ by $2k_3$ we get the arc
$L_s$. And then the position of the Yellow arc can be determined.
The $k$--box denotes two parallel arcs with $k$ half twists.
Over-crossings are from lower left to upper right if $k>0$, and
upper left to lower right if $k<0$.
\end{proposition}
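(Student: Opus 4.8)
The plan is to re-run the argument of Proposition~\ref{branch}, now carrying along the extra Dehn twists $t_{c_4}^{l}$ (and, in the second case, $t_{c_5}^{r}$). Write $M=M_{1}(n;0[l],k_{2},k_{3})=N_{1}\cup_{S}N_{2}$, with defining diagram $D(4n;1[l],m_{2},m_{3})=t_{c_4}^{l}(\{\alpha_{1},\beta_{1},\gamma_{1}\})\cup\{\alpha_{2},\beta_{2},\gamma_{2}\}$, whose underlying alternating diagram $D(4n;1,m_{2},m_{3})$ satisfies $(1,m_{2},m_{3})=\eta_{1}+4(0,k_{2},k_{3})$; in the second case $M=M_{1}(n;0[l],1[r],k_{3})$ one starts from $D(4n;1[l],1[r],1+4k_{3})$, whose underlying alternating diagram is $D(4n;1,1,1+4k_{3})=\eta_{1}+4(0,1,k_{3})$, so that $m_{1}=m_{2}=1$ and the twists $t_{c_4}^{l}$, $t_{c_5}^{r}$ are permitted by Theorem~\ref{all wad}. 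First I would recall that the underlying alternating diagram carries the involution $\tau$ of Figure~\ref{fig:13}, a $\pi$--rotation on the Black curve and a reflection on the Red and Green curves, and that Proposition~\ref{branch}, specialized to $\eta_{1}$ with $k_{1}=0$, already presents the branched quotient of the alternating splitting as $S^{3}$ with branched set the Blue two--bridge link carried by the $n\times n$ pillow together with the trivial Yellow circle, the front (back) Blue arcs having slope $-m/n$ ($m/n$) with $m=k_{2}+2k_{3}-n$ in the first case and $m=1+2k_{3}-n$ in the second. Since $m_{1}=1$ (resp.\ also $m_{2}=1$), the Red (resp.\ Green) side contributes no extra crossing, and this is exactly Figure~\ref{fig:w13}(a) (resp.\ Figure~\ref{fig:w13}(b)) with the half--twist box(es) trivial.

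The second step is to see that the extra twists descend. The curve $c_{4}$ of Figure~\ref{fig:w9} (and likewise $c_{5}$) can be isotoped to be $\tau$--invariant and disjoint from $\{\alpha_{2},\beta_{2},\gamma_{2}\}$; this is read off by comparing Figures~\ref{fig:w9}, \ref{fig:13} and \ref{fig:w10}. Hence $t_{c_4}^{l}$ commutes with $\tau$ up to isotopy, and $\tau$ extends over the retwisted handlebody --- still called $N_{1}$ --- exactly as in Proposition~\ref{branch}, as a $\pi$--rotation on the disk bounded by the Black curve and a reflection on the disks bounded by the Red and Green curves. Thus $M$ carries an involution $\tau$ preserving the three colored curves; the quotient $N_{1}/\tau\cup_{S/\tau}N_{2}/\tau$ is again a union of two balls--with--arc (two $3$--balls glued along $S^{2}$ yield $S^{3}$), and its branched set is a link which we must identify with the one drawn. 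Since $N_{2}$ and its quotient pillow are untouched, this link differs from the one of the first paragraph only near the image of $c_{4}$ (and of $c_{5}$) in $S/\tau$: there the descended twist pushes down to twisting the two Blue strands cut by $c_{4}$, so that $l$ applications insert a fixed multiple of $l$ half--twists on those strands, with handedness fixed by the sign convention of Figure~\ref{fig:6}. This is precisely the half--twist box of Figure~\ref{fig:w13}, and the same discussion for $c_{5}$ and $r$ finishes the second case.

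The last step is the geometric bookkeeping, which is verbatim that of Proposition~\ref{branch}: cutting $N_{2}$ along the disks bounded by $\alpha_{2}$ and $\beta_{2}$, modding out $\tau$, and repasting as in Figures~\ref{fig:15}--\ref{fig:17} produces the $n\times n$ square pillow with the stated Blue slopes (it is $k_{1}=0$ that makes the exponent $m$ equal $k_{2}+2k_{3}-n$, resp.\ $1+2k_{3}-n$), after which the base point $B$, the arcs $L_{s}$ and $L_{t}$, the shift by $2k_{3}$ along the oriented circle $c$, and the resulting position of the Yellow arc are traced exactly as in the alternating case, up to the relabelling forced by the $k_{1}=0$ normalization. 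The main obstacle is the middle step --- pinning down that the descended twist acts on the link diagram as precisely the stated half--twist box, with the correct sign. The cleanest way to settle this is to observe that one full twist $t_{c_4}$ upstairs corresponds downstairs to the boundary Dehn twist of the twice--punctured disk cut out by the image of $c_{4}$, i.e.\ to a full twist on the two Blue strands running through that disk, and then to iterate in $l$; everything else is the diagram chase already carried out for Proposition~\ref{branch}.
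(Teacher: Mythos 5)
Your overall strategy is the same as the paper's: locate the image $c_4/\tau$ (and $c_5/\tau$) of the twisting curve in the quotient, observe that the extra Dehn twists descend through the branched covering, and then re-run the bookkeeping of Proposition \ref{branch} with $k_1=0$ to get the pillow, the slopes, and the Yellow arc. The set-up and the slope computation are fine. But the key quantitative step is wrong by a factor of $2$. You claim that one Dehn twist $t_{c_4}$ upstairs corresponds downstairs to the boundary Dehn twist of the twice-punctured disk, i.e.\ to a \emph{full} twist ($2$ half twists) on the two strands of the branch set. In fact $c_4$ meets the fixed-point set of $\tau$ in two points, so $c_4/\tau$ is an \emph{arc} joining two branch points, and an invariant annulus neighborhood $A$ of $c_4$ is a $2$--fold cover of a disk $A/\tau$ branched over those two points; the two boundary circles of $A$ are interchanged by $\tau$ and both map onto the single circle $\partial(A/\tau)$. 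Consequently the boundary Dehn twist of $A/\tau$ (your ``full twist'' $\sigma^2$) lifts to the product of the twists about the two components of $\partial A$, which is isotopic to $t_{c_4}^{2}$ — so it is $t_{c_4}$ itself that descends to the \emph{half} twist $\sigma$ about the arc $c_4/\tau$, exactly as the paper asserts (``a Dehn twist along $c_4$ in $S$ will induce a half twist around $c_4/\tau$ in $S/\tau$'').

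This is not a cosmetic discrepancy: with your count, $t_{c_4}^{l}$ would produce $2l$ half twists in the box, whereas the statement (and Figure \ref{fig:w13}) requires exactly $l$ half twists. The factor of $2$ would propagate and contradict the later identifications — e.g.\ $M_1(n;0[l],n-m,m)\simeq S^2(-1/2,1/(l+2),m/n)$ needs a tangle of parameter $(1,l)$ with $l$ of either parity, and Proposition \ref{figure8} needs every integer surgery coefficient $l-2$ on the figure eight knot, neither of which is reachable if only even numbers of half twists occur. To repair the proof, replace the ``boundary Dehn twist of the twice-punctured disk'' argument by the branched-annulus computation above (or simply verify, as the paper does via Figure \ref{fig:w14}, that $c_4$ is $\tau$--invariant with two fixed points on it, so that its quotient is an arc between two singular points and $t_{c_4}$ descends to the corresponding braid half twist). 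The remainder of your argument — isotoping the twisted region into the square pillow and tracing $B$, $L_s$, $L_t$ and the Yellow arc as in Proposition \ref{branch} — then goes through as written.
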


\begin{proof}
We only prove the case of $M_1(n;0[l],k_2,k_3)$. The case of
$M_1(n;0[l],1[r],k_3)$ can be proved similarly.

Firstly consider the 2--fold branched cover from
$M_1(n;0[0],k_2,k_3)$ to $S^3$. Figure \ref{fig:w14} shows us the
position of the Dehn twist curve $c_4$ in $N_2$ and its image
$c_4/\tau$ in $N_2/\tau$. Here $\tau$ is the involution.

\begin{figure}[h]
\centerline{\scalebox{0.65}{\includegraphics{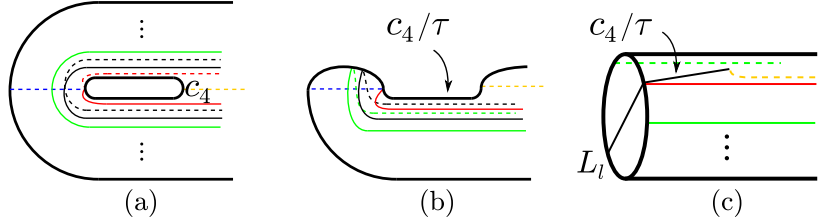}}}
\caption{Position of $c_4/\tau$}\label{fig:w14}
\end{figure}

Figure \ref{fig:w14}(a) gives the left part of $N_2$ and Figure
\ref{fig:w14}(b) gives its quotient. This quotient can also be given
by modular the reflection along the line $L_l$ in Figure
\ref{fig:w14}(c). One can compare Figure \ref{fig:w14}(c) to Figure
\ref{fig:15}(a).

By the proof of Proposition \ref{branch}, one can see that the
quotient $M_1(n;0[0],k_2,k_3)/\tau$ has branched set as in Figure
\ref{fig:w15}(a).

\begin{figure}[h]
\centerline{\scalebox{0.58}{\includegraphics{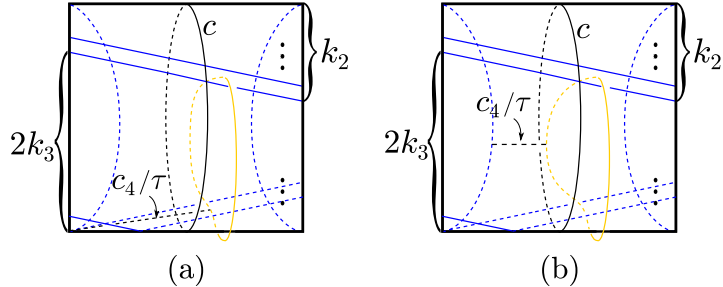}}}
\caption{Isotopy $c_4/\tau$ into the square pillow}\label{fig:w15}
\end{figure}

Notice that a Dehn twist along $c_4$ in $S$ will induce a half twist
around $c_4/\tau$ in $S/\tau$. Hence for $M_1(n;0[l],k_2,k_3)/\tau$,
when we paste $N_1/\tau$ to $N_2/\tau$, the gluing map will be
different from the case of $M_1(n;0[0],k_2,k_3)/\tau$ by $l$ half
twists around $c_4/\tau$.

We can require these $l$ half twists happened in a small
neighborhood of $c_4/\tau$, and isotopy $c_4/\tau$ and its
neighborhood into the square pillow as in Figure \ref{fig:w15}(b).
Then we will get the picture as in Figure \ref{fig:w13}(a).
\end{proof}

\begin{proposition}
We have the following homeomorphisms:

1. $M_1(n;0[l],n+m,0)\simeq L(n,m)\,\#\,L(l,1)$, $0\leq m<n$.

2. $M_1(n;0[l],n-m,m)\simeq S^2(-1/2,1/(l+2),m/n)$, $0<m<n$.

3. $M_1(n;0[l],1[r],0)\simeq S^2(1/l,1/s,1/n)$, $n>0$.
\end{proposition}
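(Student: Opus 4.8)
The plan is to follow the template of the proof of Proposition~\ref{Idmfd}, simply replacing the three bridge links of Proposition~\ref{branch} by those of Proposition~\ref{wbranch} and keeping track of the extra twist box (or boxes) produced by $t_{c_4}^l$ (and $t_{c_5}^r$). Two ingredients are already in hand: by Proposition~\ref{wbranch} each of $M_1(n;0[l],n+m,0)$, $M_1(n;0[l],n-m,m)$ and $M_1(n;0[l],1[r],0)$ is the $2$--fold branched cover of an explicitly drawn three bridge link in $S^3$, assembled from the $(m,n)$--rational tangle sitting in the $n\times n$ pillow, a Yellow arc whose position is governed by $k_3$, and an $l$--box (resp. an $l$--box and an $r$--box); and the classical dictionary (\cite{BZ}, Chapters~11--12) says that the $2$--fold branched cover of a Montesinos link with rational tangles of slopes $p_i/q_i$ is the Seifert fibred space $S^2(p_1/q_1,\dots)$, that it is additive under connected sum of links, that a $k$--crossing twist region is a rational tangle whose cover is a solid torus, and that the covers of the $2$--component trivial link and of the Hopf link are $S^1\times S^2=L(0,1)$ and $RP^3=L(2,1)$.

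For parts~1 and~3 we have $k_3=0$, so, exactly as in Figure~\ref{fig:18}, the three bridge link of Proposition~\ref{wbranch} splits along a $2$--sphere as a connected sum. In part~1 one summand is the Blue two bridge link, whose cover is $L(n,m)$ with $m=k_2+2k_3-n$, and the other is the $l$--box closed up into the $(1,l)$ two bridge link, whose cover is $L(l,1)$; since connected sum of links goes to connected sum of branched covers, this gives $M_1(n;0[l],n+m,0)\simeq L(n,m)\,\#\,L(l,1)$, with $l=0,1,2$ producing the summands $S^1\times S^2,\ S^3,\ RP^3$ and with $l=0$ recovering Proposition~\ref{Idmfd}(1). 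In part~3, after isotoping the Yellow arc into the pillow and performing the $\pi$--rotation about $c$ as in Figure~\ref{fig:17}, the link becomes a Montesinos link with three rational tangles: the $l$--box, the $r$--box, and the tangle of the Blue arcs, whose slope is now $-(1-n)/n$ and which therefore contributes the singular fibre $1/n$. Reading off the three slopes gives $S^2(1/l,1/s,1/n)$ as claimed, where when $l$ or $r$ is $0$ the corresponding $S^1\times S^2$ summand splits off, so that one actually obtains a connected sum.

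For part~2 we have $k_3=m\neq0$, so there is no sphere splitting; instead, pushing the Yellow arc of $N_1/\tau$ into $S/\tau$ and carrying out the $\pi$--rotation about $c$ (the analogue of Figures~\ref{fig:17} and~\ref{fig:20}) turns the branched set into a Montesinos link with three rational tangles. One is the $(m,n)$--tangle in the pillow, giving the singular fibre $m/n$; one is a $(-1,2)$--tangle, giving $-1/2$; and the third is the twist region which in the alternating case $l=0$ was a $(1,2)$--tangle but which now absorbs the $l$ half-twists of $t_{c_4}^l$ to become a $(1,l+2)$--tangle, giving $1/(l+2)$. Hence $M_1(n;0[l],n-m,m)\simeq S^2(-1/2,1/(l+2),m/n)$, which at $l=0$ is $S^2(-1/2,1/2,m/n)=P(m,n)$, consistently with Proposition~\ref{Idmfd}(3).

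The main obstacle is the diagram bookkeeping already present in Propositions~\ref{branch} and~\ref{Idmfd}: one must check that after the standard isotopies — pushing the Yellow arc off the pillow, the $\pi$--rotation about $c$, and sliding the $l$-- and $r$--boxes into position — the extra twist boxes land in \emph{parallel} tangle slots rather than nested ones, so that the link stays Montesinos with exactly three rational tangles and the claimed bridge number, and one must fix the sign conventions (for the $k$--box over/under crossings and for the Seifert invariants) so that the fibres come out as $-1/2,\ 1/(l+2),\ m/n$ (resp. $1/l,\ 1/s,\ 1/n$) and not their mirror images. With the figures drawn carefully these are routine manipulations, and the $l=0$ (resp. $l=r=0$) specializations back to Proposition~\ref{Idmfd} serve as a consistency check on the conventions.
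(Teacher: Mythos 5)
Your proposal is correct and follows essentially the same route as the paper: quote Proposition \ref{wbranch} for the branched sets, isotope the Yellow arc into the pillow and rotate about $c$ as in Figure \ref{fig:17}, and then read off connected sums of two--bridge links (parts 1 and 3 degenerate cases) or Montesinos links with three rational tangles via the dictionary in \cite{BZ}. Your bookkeeping in part 2, where the $l$ half--twists are absorbed into the former $(1,2)$--tangle to give a $(1,l+2)$--tangle, is in fact the reading consistent with the stated answer $S^2(-1/2,1/(l+2),m/n)$ and with the $l=0$ specialization to $P(m,n)$, so no further changes are needed.
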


\begin{proof}
This proof is similar to the proof of Proposition \ref{Idmfd} and
the same argument as in Figure \ref{fig:17} will be used.

For $M_1(n;0[l],n+m,0)$ the branched set is isotopic to the link as
in Figure \ref{fig:w16}(a). It is clear that the link is a connected
sum. The 2--fold branched cover of this link is the connected sum of
two Lens spaces.

\begin{figure}[h]
\centerline{\scalebox{0.5}{\includegraphics{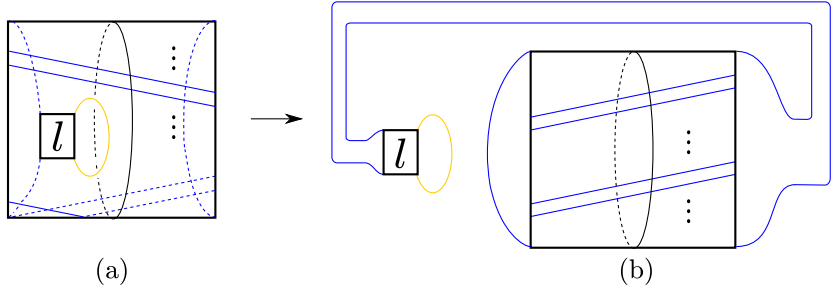}}}
\caption{Connected sum of two bridge links}\label{fig:w16}
\end{figure}

For $M_1(n;0[l],n-m,m)$ the branched set is isotopic to the link as
in Figure \ref{fig:w17}(a). Pushing the Yellow arc into the square
pillow, we will get Figure \ref{fig:w17}(b).

\begin{figure}[h]
\centerline{\scalebox{0.5}{\includegraphics{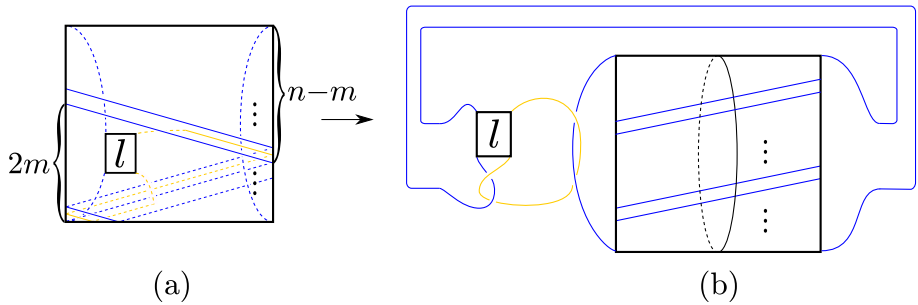}}}
\caption{Montesinos link or connected sum}\label{fig:w17}
\end{figure}

When $l\neq -2$, it is a Montesinos link with three rational tangles
having parameters $(1,l)$, $(-1,2)$ and $(m,n)$. When $l=-2$, this
link is a connected sum of a two bridge link and a Hopf link. The
corresponding 3--manifold is a Seifert fibred space or a connected
sum. The manifold can be presented uniformly as
$S^2(-1/2,1/(l+2),m/n)$.

For $M_1(n;0[l],1[r],0)$ the branched set is isotopic to the link as
in Figure \ref{fig:w18}(a). When we take a half twist at the right
side of the square pillow, we will get Figure \ref{fig:w18}(b). And
we can further isotopy it to Figure \ref{fig:w18}(c).

\begin{figure}[h]
\centerline{\scalebox{0.58}{\includegraphics{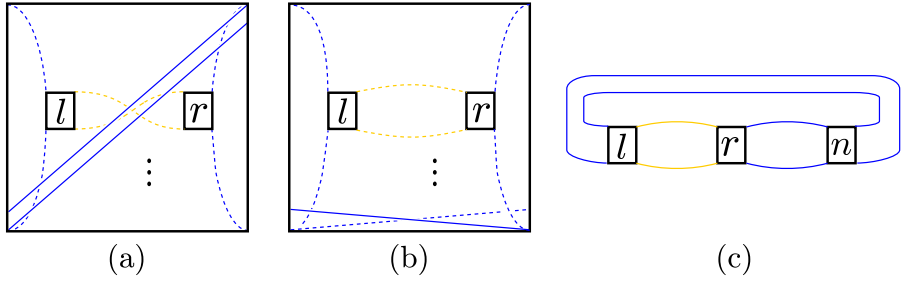}}}
\caption{Pretzel link or connected sum}\label{fig:w18}
\end{figure}

Clearly when $l\neq 0$ and $r\neq 0$, it is a Montesinos link with
three rational tangles having parameters $(1,l)$, $(1,r)$ and
$(1,n)$. This is also a Pretzel link. When $l\neq 0$ or $r\neq 0$,
it is a connected sum. The corresponding 3--manifold can be a
Seifert fibred space, a connected sum of Lens spaces or a connected
sum of a Lens space and $S^1\times S^2$. The manifold can be
presented uniformly as $S^2(1/l,1/r,1/n)$.
\end{proof}

\begin{proposition}\label{figure8}
$M_2(5;0[l],4,1)\simeq S^3_{l-2/1}(4_1)(\simeq M_1(5;0[-l-2],3,3))$.
\end{proposition}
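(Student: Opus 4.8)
The plan is to compute, via Proposition \ref{wbranch}, the branched set in $S^3$ whose $2$--fold branched cover is $M_2(5;0[l],4,1)$, and then recognize that link as a Montesinos/pretzel link whose $2$--fold branched cover gives a Dehn surgery on the figure eight knot. First I would apply Proposition \ref{wbranch} in the form for $M_2$ with $n=5$, $k_2=4$, $k_3=1$: the front/back Blue arcs in the $5\times 5$ pillow have slope $\mp m/5$ with $m=k_2+2k_3-n=1$ (so these are the trivial-tangle slopes $\mp 1/5$), the arc positions $L_s$, $L_t$ are determined by walking $2k_3=2$ along the oriented circle $c$, and the extra $l$ half twists from $t_{c_4}^l$ live in a small box near $c_4/\tau$ inside the pillow. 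This produces an explicit three-bridge link $L_l$ in $S^3$, with the $l$--box recording the surgery parameter.

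Next I would simplify $L_l$ by the same isotopy used in the proof of Proposition \ref{Idmfd} and illustrated in Figure \ref{fig:17}: push the Yellow arc through the pillow disks and do the $\pi$--rotation about $c$ that swaps inside and outside, exhibiting $L_l$ as a Montesinos link with three rational tangles. I expect the tangle parameters to work out to $(-1,2)$, $(1,l)$ and $(1,\cdot)$ type data so that the $2$--fold branched cover is naturally seen as $1/1$ (or $1/n$) surgery on one component of a simpler link sitting over $RP^3$ or $S^3$ — concretely, the double branched cover of the ``numerator'' tangle closure is the figure eight knot exterior, and filling the remaining tangle slot with the $l$--box corresponds to $l'/1$ Dehn filling. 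Matching the framing conventions (Over-crossings in the $k$--box as specified in Proposition \ref{wbranch}, and the half-twist/Dehn-twist bookkeeping from Lemma \ref{wsym}) should yield the shift $l \mapsto l-2$, giving $S^3_{(l-2)/1}(4_1)$.

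Finally, the identification $M_2(5;0[l],4,1)\simeq M_1(5;0[-l-2],3,3)$ I would get directly from Lemma \ref{wsym}(5) together with the base-vector change between $\eta_2=(1,-5,2)$ and $\eta_1=(1,-3,1)$: with $n=5$, $m_1=1$ so $0<m_1<4n$, and $D(20;1[l],15,9)\simeq D(20;-1[l+2],15,10)$, which after renormalizing $m_1$ modulo sign and relabeling via Lemma \ref{wsym}(1)–(4) becomes a diagram with $\eta_1$ and the parameter $l\mapsto -l-2$; this is the same manoeuvre already carried out in the proof of Lemma \ref{wpare}. The main obstacle will be the last paragraph's bookkeeping in the link-diagram step — keeping the orientation of the circle $c$, the sign of the half twists, and the $\pm m/n$ slope conventions all consistent so that the surgery coefficient comes out as $l-2$ rather than $l$ or $-l-2$ or $l+2$; the topology (double branched cover of the relevant Montesinos link is a surgery on $4_1$) is standard from Chapter 11–12 of \cite{BZ} and Proposition \ref{hyperbolicM}'s methods, but pinning down the exact integer requires care.
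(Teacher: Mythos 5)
Your starting point (computing the branched set via Proposition \ref{wbranch}) and your treatment of the parenthetical identification $M_2(5;0[l],4,1)\simeq M_1(5;0[-l-2],3,3)$ via Lemma \ref{wsym} are both fine and match the paper. But the middle of your argument contains a genuine error and then a gap where the real work should be. First, the link here is \emph{not} a Montesinos link with three rational tangles: the $2$--fold branched cover of such a link is a Seifert fibred space, whereas $S^3_{(l-2)/1}(4_1)$ is hyperbolic for all but finitely many $l$. The Montesinos reduction of Proposition \ref{Idmfd} is exactly what fails once $k_3\neq 0$ forces the Yellow circle and the $l$--box into an essential position; if you could exhibit the link as Montesinos you would have disproved the proposition rather than proved it. What is true (and what the paper uses) is only the tangle-replacement principle: the $l$--box is a rational tangle, replacing it by the horizontal trivial tangle yields a link whose double branched cover is $S^3$, the lift of the box is a solid torus, and varying the tangle corresponds to Dehn surgery on the lift of the core arc of the box.

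Second, even granting that principle, you have deferred the two computations that constitute the proof. You must show that the lift of the core arc, after the trivial replacement, is actually the figure eight knot --- the paper does this by explicit isotopies (Figures \ref{fig:h5} and \ref{fig:h6}), and nothing in your write-up produces $4_1$ rather than some other knot. And you must determine the surgery coefficient, which cannot be extracted from ``framing conventions'' alone: the paper's device is to draw a green arc on the boundary of the $l$--box winding $l/2$ times around it, lift it to a circle on the boundary of the solid torus, and carry it through the isotopies; the circle then winds $l-2$ times around the figure eight knot, and comparison with the Seifert-surface framing (the parallel curve in Figure \ref{fig:h6}(b)) identifies the filling slope as $(l-2)/1$. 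The shift by $-2$ is produced by this geometric bookkeeping, not by the sign conventions of Lemma \ref{wsym}, so without some version of the green-arc computation your proof does not close.
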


\begin{proof}
By the proof of Proposition \ref{branch} and \ref{wbranch}, it is
not hard to see the corresponding link of $M_2(n;0[l],k_2,k_3)$ is
as in Figure \ref{fig:h4}(a). Hence the corresponding link of
$M_2(5;0[l],4,1)$ is as in Figure \ref{fig:h4}(b), and it is
isotopic to the link as in Figure \ref{fig:h4}(c).

\begin{figure}[h]
\centerline{\scalebox{0.5}{\includegraphics{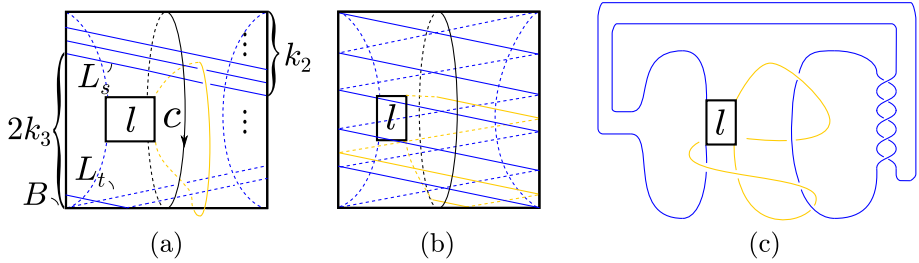}}} \caption{The
quotient $M_2(5;0[l],4,1)/\tau$}\label{fig:h4}
\end{figure}

On the boundary of the $l$--box we draw a green arc connecting two
singular points and winding around the box $l/2$ rounds. It can be
obtained from the trivial case by $l$ half twists, see Figure
\ref{fig:h5}(a) for the case $l=5$. Clearly the 2--fold branched
cover of the box is a solid torus. And the 2--fold branched cover of
this green arc is a green circle, which bounds a disk in the solid
torus.

If we replace the $l$--box by a box containing two parallel
horizontal singular arcs, then the singular set will be as in Figure
\ref{fig:h5}(b), which is a trivial knot. The new box can be thought
as a regular neighborhood of a regular arc, and the green arc winds
around the regular arc $l/2$ rounds. We can isotopy this picture to
the position as in Figure \ref{fig:h5}(c).

\begin{figure}[h]
\centerline{\scalebox{0.5}{\includegraphics{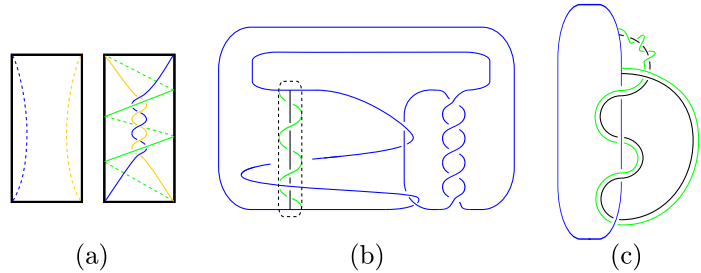}}}
\caption{Surgery on $M_2(5;0[l],4,1)/\tau$}\label{fig:h5}
\end{figure}

Then it is easy to see its 2--fold branched cover is $S^3$ and the
2--fold branched cover of the regular arc is a figure eight knot, as
in Figure \ref{fig:h6}(a).

\begin{figure}[h]
\centerline{\scalebox{0.5}{\includegraphics{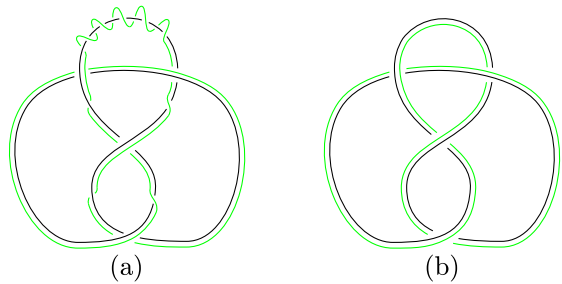}}} \caption{The
figure eight knot}\label{fig:h6}
\end{figure}

Now the green circle winds around the regular circle $l-2$ rounds.
Removing the regular neighborhood of the figure eight knot, since
the circle which bounds a Seifert surface in the complement is
parallel to the knot as in Figure \ref{fig:h6}(b), we know that
$M_2(5;0[l],4,1)$ is the $l-2$ surgery on the figure eight knot.
\end{proof}

\begin{remark}
1. The figure eight knot has exactly $10$ exceptional slops, namely
$\infty$ and $-4\leq p/1\leq 4$. Other $S^3_{p/1}(4_1)$ are all
hyperbolic. The exceptional cases are listed below (see \cite{Th}).
\begin{itemize}
\item $S^3_{\infty}(4_1)\simeq S^3$.
\item $S^3_{0/1}(4_1)$ is the $T^2$--bundle over $S^1$ with monodromy
$\bigl(\begin{smallmatrix} 1 & 1\\1 & 2\end{smallmatrix}\bigr)$. It
admits the {\bf Sol} geometry.
\item $S^3_{\pm1/1}(4_1)\simeq S^2(-1/2,1/3,1/7)$.
\item $S^3_{\pm2/1}(4_1)\simeq S^2(-1/2,1/4,1/5)$.
\item $S^3_{\pm3/1}(4_1)\simeq S^2(-2/3,1/3,1/4)$.
\item $S^3_{\pm4/1}(4_1)$ is the union of the trefoil knot complement and
the twisted $I$--bundle over the Klein bottle. It contains an
incompressible torus.
\end{itemize}

2. One can also show that $M_2(n;0[1],n-3,2)\simeq
S^3_{n-2/1}(4_1)(n\geq 5)$ by a similar way. Compare it to
Proposition \ref{hyperbolicM}.
\end{remark}

\begin{remark}

1. $S^1\times S^2 \,\#\, S^1\times S^2$ is a genus two 3--manifold
which has no weakly alternating Heegaard splitting. Otherwise its
$\pi_1\cong \mathbb{Z}\ast\mathbb{Z}/H$ with $H$ nontrivial. But
$\mathbb{Z}\ast\mathbb{Z}/H \ncong \mathbb{Z}\ast\mathbb{Z}$ because
$\mathbb{Z}\ast\mathbb{Z}$ is Hopfian. Actually $S^1\times
S^2\,\#\,S^1\times S^2$ does not admit any automorphism $f$ with
$\Omega(f)$ consists of Williams solenoids, whose defining
neighborhoods having genus $g\leq 2$. This is similar to the fact
that $S^1\times S^2$ does not admit any automorphism $f$ with
$\Omega(f)$ consists of Smale solenoids.

2. By Section \ref{consf} and Theorem \ref{wahws}, we see that
globally there can be many non-homeomorphic Williams
solenoids(handcuffs solenoids) in a given 3--manifold, as the
non-wondering sets of non-conjugate automorphisms. The following
question is natural, which have been studied in \cite{MY2} in the
case of Smale solenoids.

{\bf Question :} Given a 3--manifold $M$, what kind of Williams
solenoids (with defining neighborhoods having genus $g\leq 2$) can
be globally realized as attractors in $M$? And how many of them?

3. We have shown that half of Prism manifolds admit automorphisms
$f$ with $\Omega(f)$ consist of two Williams solenoids. Hence it is
natural to ask what about the other half, namely $P(m,n)$ with
$0<n<m$? In the case of $S^3_{l/1}(4_1)$ one can ask what about
other surgeries? Generally we can ask the following question.

{\bf Question :} Does a 3--manifold in the following classes (all
having Heegaard genus two) admit an automorphism whose non-wondering
set consists of Williams solenoids (with defining neighborhood
having genus $g\leq 2$)?
\begin{itemize}
\item Seifert fibred spaces $S^2(a,b,c)$.
\item Surgeries on two bridge knots.
\end{itemize}

4. The manifolds as in Lemma \ref{pare} and \ref{wpare} may give
homeomorphic ones. But on the other hand, they can give many kinds
of 3--manifolds. We wonder how to classify them and get more
familiar genus two 3--manifolds admitting dynamics $f$ such that
$\Omega(f)$ consist of solenoid attractors and repellers.
\end{remark}

{\bf Acknowledgement.} The authors would like to thank Professor
Shicheng Wang and Xiaoming Du for their many helpful discussions.

\bibliographystyle{amsalpha}

\begin{thebibliography}{10}
\bibitem[1]{BMP}
M. Boileau, S. Maillot, J. Porti, {\it Three-dimensional orbifolds
and their geometric structures}, Panoramas et Synth\`eses 15.
Soci\'et\'e Math\'matique de France, Paris 2003.

\bibitem[2]{BZ}
G. Burde, H. Zieschang, {\it Knots}, De Gruyter studies in
mathematics 5, Walter de Gruyter, Berlin, New York, 1985.

\bibitem[3]{JNW}
B. Jiang, Y. Ni, S. Wang, {\it 3--manifolds that admit knotted
solenoids as attractors}, Trans. Amer. Math. Soc. 356 (2004), no.
11, 4371--4382.

\bibitem[4]{M}
M. Montesinos, {\it Classical tessellations and three-manifolds},
Springer-Verlag, 1985.

\bibitem[5]{MY}
J. Ma, B. Yu, {\it Genus two Smale-Williams solenoid attractors in
3--manifolds}, J. Knot Theory Ramifications 20 (2011), no. 6,
909--926.

\bibitem[6]{MY2}
J. Ma, B. Yu, {\it The realization of Smale solenoid type attractors
in 3--manifolds}, Topology Appl. 154 (2007), no. 17, 3021--3031.

\bibitem[7]{Sm}
S. Smale, {\it Differentiable dynamical systems}, Bull. Amer. Math.
Soc. 73 (1967), 747--817.

\bibitem[8]{Th}
W. P. Thurston, {\it The geometry and topology of three-manifolds},
Lecture nots, 1978.

\bibitem[9]{Wi}
R. F. Williams, {\it One-dimensional non-wandering sets}, Topology 6
(1967) 473--487.
\end{thebibliography}

\end{document}